\theoremstyle{plain}
\begin{document}

\title{Generating sequences and semigroups of valuations on 2-dimensional normal local rings}
\author{Arpan Dutta}
%------    GENERAL MACROS    -----
\def\NZQ{\mathbb}               % the font for N,Z,Q,R,C
\def\NN{{\NZQ N}}
\def\QQ{{\NZQ Q}}
\def\ZZ{{\NZQ Z}}
\def\RR{{\NZQ R}}
\def\CC{{\NZQ C}}
\def\AA{{\NZQ A}}
\def\BB{{\NZQ B}}
\def\PP{{\NZQ P}}
\def\FF{{\NZQ F}}
\def\GG{{\NZQ G}}
\def\HH{{\NZQ H}}
\def\P{\mathcal P}

%------------------------------------------------
% 
%
\let\union=\cup
\let\sect=\cap
\let\dirsum=\oplus
\let\tensor=\otimes
\let\iso=\cong
\let\Union=\bigcup
\let\Sect=\bigcap
\let\Dirsum=\bigoplus
\let\Tensor=\bigotimes

\newtheorem{Theorem}{Theorem}[section]
\newtheorem{Lemma}[Theorem]{Lemma}
\newtheorem{Corollary}[Theorem]{Corollary}
\newtheorem{Proposition}[Theorem]{Proposition}
\newtheorem{Remark}[Theorem]{Remark}
\newtheorem{Remarks}[Theorem]{Remarks}
\newtheorem{Example}[Theorem]{Example}
\newtheorem{Examples}[Theorem]{Examples}
\newtheorem{Definition}[Theorem]{Definition}
\newtheorem{Problem}[Theorem]{}
\newtheorem{Conjecture}[Theorem]{Conjecture}
\newtheorem{Question}[Theorem]{Question}
\let\epsilon\varepsilon
\let\phi=\varphi
\let\kappa=\varkappa

\def \a {\alpha}
\def \s {\sigma}
\def \d {\delta}
\def \g {\gamma}

%
%           We print on A4 paper
%
\textwidth=15cm \textheight=22cm \topmargin=0.5cm
\oddsidemargin=0.5cm \evensidemargin=0.5cm \pagestyle{plain}
% ------    END OF GENERAL MACROS    -------

\address{Department of Mathematics,
	University of Missouri, Columbia, MO, 65211, USA}
\email{adk3c@mail.missouri.edu}

\begin{abstract}
	In this paper we develop a method for constructing generating sequences for valuations dominating the ring of a two dimensional quotient singularity. Suppose that $K$ is an algebraically closed field of characteristic zero,  $K[X,Y]$ is a polynomial ring over $K$
	and $\nu$ is a rational rank 1 valuation of the field $K(X,Y)$ which dominates $K[X,Y]_{(X,Y)}$. Given a  finite Abelian group $H$ acting diagonally on $K[X,Y]$, and a generating sequence of $\nu$ in $K[X,Y]$ whose members are eigenfunctions for the action of $H$,  we compute a generating sequence for the invariant ring $K[X,Y]^H$. We use this to compute the semigroup $S^{K[X,Y]^H}(\nu)$ of values of elements of $K[X,Y]^H$. We further determine when $S^{K[X,Y]}(\nu)$ is a finitely generated $S^{K[X,Y]^H}(\nu)$-module. 
\end{abstract}

\maketitle

\section*{Notations} Let $\NN$ denotes the natural numbers $\{ 0, 1, 2, \cdots \}$. We denote the positive integers by $\ZZ_{> 0}$ and the positive rational numbers by $\QQ_{> 0}$. If the greatest common divisor of two positive integers $a$ and $b$ is $d$, this is denoted by $(a,b) = d$. If $\{ \gamma_k \}_{k \geq 0}$ is a set of rational numbers, we define $G(\gamma_0 , \cdots , \gamma_n) = \sum_{k = 0}^{n} \gamma_k \ZZ$ and $G(\gamma_0 , \gamma_1, \cdots ) = \sum_{ k \geq 0} \gamma_k \ZZ$. Similarly we define $S(\gamma_0, \cdots , \gamma_n) = \sum_{k = 0}^{n} \gamma_k \NN$ and $S(\gamma_0, \gamma_1, \cdots ) = \sum_{ k \geq 0} \gamma_k \NN$. If a group $G$ is generated by $g_1, \cdots , g_n$, we denote this by $G = <g_1, \cdots , g_n>$.

\section*{Introduction}

Let $R$ be a local domain with maximal ideal $m_R$ and quotient field $L$, and $\nu$ be a valuation  of $K$ which dominates $R$.  Let $V_{\nu}$ be the valuation ring of $\nu$, with maximal ideal $m_{\nu}$ and $\Phi_{\nu}$ be the valuation group of $\nu$. The associated graded ring of $R$ along the valuation $\nu$, defined by Teissier in [\ref{T1}] and [\ref{T2}], is 
\begin{equation}\label{eq1}
{\rm gr}_{\nu}(R)=\bigoplus_{\gamma\in \Phi_{\nu}}\mathcal P_{\gamma}(R)/\mathcal P^+_{\gamma}(R)
\end{equation}
where
$$
\mathcal P_{\gamma}(R)=\{f\in R\mid \nu(f)\ge \gamma\}\mbox{ and }\mathcal P^+_{\gamma}(R)=\{f\in R\mid \nu(f)> \gamma\}.
$$ 
In general, ${\rm gr}_{\nu}(R)$  is not Noetherian. 
The  valuation semigroup  of $\nu$ on $R$ is 
\begin{equation}\label{eq2}
S^R(\nu)=\{\nu(f)\mid f\in R\setminus(0)\}.
\end{equation}
If $R/m_R=V_{\nu}/m_{\nu}$ then ${\rm gr}_{\nu}(R)$ is the group algebra of $S^R(\nu)$ over $R/m_R$, so that ${\rm gr}_{\nu}(R)$ is completely determined by $S^R(\nu)$.

A generating sequence of $\nu$ in $R$ is a set of elements of $R$ whose classes in ${\rm gr}_{\nu}(R)$ generate ${\rm gr}_{\nu}(R)$ as an $R/m_R$-algebra.  An important problem is to construct a generating sequence of $\nu$ in $R$  which gives explicit formulas for the value of an arbitrary element of $R$, and gives explicit computations of the algebra (\ref{eq1}) and the semigroup (\ref{eq2}).  For regular local rings $R$ of dimension 2, the construction of generating sequences is realized in a very satisfactory way by Spivakovsky [\ref{S}] (with the assumption that $R/m_R$ is algebraically closed) and by Cutkosky and Vinh [\ref{CV1}] for arbitrary regular local rings of dimension 2.  A consequence of this theory is a simple classification of the semigroups which occur as a valuation semigroup on a regular local ring of dimension 2. There has been some success in constructing generating sequences in Noetherian local rings of dimension $\ge 3$, for instance in [\ref{K}], [\ref{Mo}], [\ref{NS}] and [\ref{T2}], but the general situation is very complicated and is not well understood. 

Another direction is to construct generating sequences in normal 2 dimensional Noetherian local rings. This is also extremely difficult. In Section 9 of [\ref{CV1}], a generating sequence is constructed for a rational rank 1 non discrete valuation in the ring $R=k[u,v,w]/(uv-w^2)$, from which the semigroup is constructed. The example shows that the valuation semigroups of  valuations dominating a normal two dimensional Noetherian local ring  are much more complicated than those of valuations dominating a two dimensional regular local ring. 
In this paper, we develop the method of this example into a general theory. 

If $R$ is a 2 dimensional Noetherian local domain, and $\nu$ is a valuation of the quotient field $L$ of $R$ which dominates $R$, It follows from Abhyankar's inequality [\ref{Ab1}] that  the valuation group $\Phi_{\nu}$ of $\nu$ is a finitely generated group, except in the case when the rational rank of $\nu$ is 1 ($\Phi_{\nu}\otimes\QQ\cong \QQ$) and $\Phi_{\nu}$ is non discrete.  
As this is the essentially difficult case in dimension 2, we will restrict to such valuations. 

Let $K$ be an algebraically closed field of characteristic 0 and  $K[X,Y]$ be a polynomial ring in two variables, which has the maximal ideal $\mathfrak m=(x,y)$. Let $\alpha\in K$ be a primitive $m$-th root of unity and $\beta\in K$ be a primitive $n$-th root of unity.  Now the group ${\NZQ U}_m\times {\NZQ U}_n$ acts on $K[X,Y]$ by $K$-algebra isomorphisms, where
$$
(\alpha^i,\beta^j)X=\alpha^iX\mbox{ and }(\alpha^i,\beta^j)Y=\beta^jY.
$$
In Theorem  1.2, we give a classification of the  subgroups $H_{i,j,t,x}$ of ${\NZQ U}_m\times{\NZQ U}_n$.  Let
$$
A_{i,j,t,x}=K[X,Y]^{H_{i,j,t,x}}\mbox{ and }\mathfrak n=\mathfrak m\cap A_{i,j,t,x}.
$$
We say that $f\in K[X,Y]$ is an eigenfunction for the action of $H_{i,j,t,x}$ on $K[X,Y]$ if for all $g\in H_{i,j,t,x}$, $gf =\lambda_gf$ for some $\lambda_g\in K$.

Let $\nu$ be a rational rank 1 non discrete valuation dominating the local ring $K[X,Y]_{\mathfrak m}$.  Using the algorithm of \ref{S} or \ref{CV1}, we construct a generating sequence
\begin{equation}\label{eq3}
Q_0=X, Q_1=Y,Q_2,\ldots
\end{equation}
of $\nu$ in $K[X,Y]$. Let $\nu^*$ be the restriction of $\nu$ to the quotient field of $A_{i,j,t,x}$. In Theorem \ref{invariant subring theorem}, we construct a generating sequence of $\nu^*$ in $A_{i,j,t,x}$, when the  members of the generating sequence (\ref{eq3}) are eigenfunctions for the action of $H_{i,j,t,x}$ on $K[X,Y]$. We give an explicit construction of the valuation semigroups $S^{(A_{i,j,t,x})_{\mathfrak n}}(\nu)$ in Theorem \ref{invariant subring theorem}.

Suppose that a Noetherian local domain $B$ dominates a Noetherian local domain $A$. Let $L$ be the quotient field of $A$, $M$ be the quotient field of $B$ and suppose that $M$ is finite over $L$. 
Suppose that $\omega$ is a valuation of $L$ which dominates $A$ and $\omega^*$ is an extension of $\nu$ to $M$ which dominates $B$. We can ask if ${\rm gr}_{\omega^*}(B)$ is a finitely generated ${\rm gr}_{\omega}(A)$-module or if $S^{B}(\omega^*)$ is a finitely generated $S^{\omega^*}(B)$-module. In general, ${\rm gr}_{\omega^*}(B)$ is not a finitely generated ${\rm gr}_{\omega}(A)$-algebra, so is certainly not a finitely generated ${\rm gr}_{\omega}(A)$-module. However, is is shown in Theorem 1.5. [\ref{C2}] that if $A$ and $B$ are essentially of finite type over a field characteristic zero, then there exists a birational extension $A_1$ of $A$ and a birational extension $B_1$ of $B$ such that 
$\omega^*$ dominates $B_1$, $\omega$ dominates $A_1$, $B_1$ dominates $A_1$ and ${\rm gr}_{\omega^*}(B_1)$ is a finitely generated ${\rm gr}_{\omega}(A_1)$-module (so $S^{B_1}(\omega^*)$ is a finitely generated $S^{A_1}(\omega)$-module). 

The situation is much more subtle in positive characteristic and mixed characteristic. In Theorem 1 [\ref{C1}], it is shown that  If $A$ and $B$ are excellent of dimension two and $L\rightarrow M$ is separable, then there exist birational extension $A_1$ of $A$ and $B_1$ of $B$ such that $A_1$ and $B_1$ are regular, $B_1$ dominates $A_1$,  $\omega^*$ dominates $B_1$  and ${\rm gr}_{\omega^*}(B_1)$ is a finitely generated ${\rm gr}_{\omega}(A_1)$-algebra if and only if the valued field extension $L\rightarrow M$ is without defect. For a discussion of defect in a finite extension of valued fields, see [\ref{Ku1}].

In this paper, we completely answer the question of finite generation of $S^{[K[X,Y]_{\mathfrak m}}(\nu)$ as a  $S^{(A_{i,j,t,x})_{\mathfrak n}}(\nu)$-module  (and hence of ${\rm gr}_{\nu}(K[X,Y]_{\mathfrak m})$ as a  ${\rm gr}_{\nu}((A_{i,j,t,x})_{\mathfrak n})$-module) for valuations with a generating sequence of eigenfunctions.  We obtain the following results in Section 4.

\begin{Proposition}
	Let $R_{\mathfrak{m}} = K[X,Y]_{(X,Y)}$ and $H_{i,j,t,x}$ be a subgroup of $\mathbb{U}_m \times \mathbb{U}_n$. Let $\nu$ be a rational rank $1$ non discrete valuation $\nu$ dominating $R_{\mathfrak{m}}$ with a generating sequence (\ref{eq3}) of eigenfunctions for $H_{i,j,t,x}$. Then $S^{R_{\mathfrak{m}}} (\nu)$ is finitely generated over the subsemigroup $S^{(A_{i,j,t,x})_{\mathfrak{n}}} (\nu)$ if and only if $\exists N \in \mathbb{Z}_{> 0} $ such that $ Q_r \in A_{i,j,t,x} \, \forall \, r \geq N$. Further, if $Q_N \in A_{i,j,t,x}$, then $Q_M \in A_{i,j,t,x} \, \forall \, M \geq N \geq 1$. 
\end{Proposition}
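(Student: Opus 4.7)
The plan is to work throughout with the characters $\chi_r \in \hat{H}$ of the eigenfunctions $Q_r$ (where $H = H_{i,j,t,x}$), so that $Q_r \in A_{i,j,t,x}$ if and only if $\chi_r$ is trivial. The central structural input is the recursion $\chi_{r+1} = \chi_r^{n_r}$, which I would derive by examining the defining identity of the generating sequence from Spivakovsky / Cutkosky-Vinh: one has $Q_{r+1} = Q_r^{n_r} - c_r \prod_{k<r} Q_k^{b_k^{(r)}} + (\text{terms of strictly higher } \nu\text{-value})$, with $c_r \in K^{\times}$ and $n_r \beta_r = \sum_k b_k^{(r)} \beta_k$; since $Q_{r+1}$ and $Q_r^{n_r}$ are both eigenfunctions sharing the same leading value, their characters must agree. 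A one-line induction in $r$ then establishes the final stability assertion: if $\chi_N = 1$ then $\chi_M = 1$ for every $M \geq N$.

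For the implication ``$Q_r \in A_{i,j,t,x}$ for $r \geq N$ implies finite generation'', let $k_r$ denote the order of $\chi_r$ in $\hat{H}$ for each $r < N$; then $Q_r^{k_r} \in A_{i,j,t,x}$ and $k_r \beta_r \in S^{(A_{i,j,t,x})_{\mathfrak{n}}}(\nu)$. I would take
\[
F = \Big\{\sum_{r < N} a_r \beta_r : 0 \leq a_r < k_r \Big\} \subseteq S^{R_{\mathfrak{m}}}(\nu)
\]
and verify $S^{R_{\mathfrak{m}}}(\nu) = F + S^{(A_{i,j,t,x})_{\mathfrak{n}}}(\nu)$ as follows: given $\gamma = \sum_r c_r \beta_r$, the Euclidean division $c_r = q_r k_r + a_r$ with $0 \leq a_r < k_r$ yields $\gamma = \sum_r q_r (k_r \beta_r) + \sum_{r<N} a_r \beta_r$, where the first sum lies in $S^{(A_{i,j,t,x})_{\mathfrak{n}}}(\nu)$ (since $k_r \beta_r$ is there for $r < N$, while for $r \geq N$ we have $k_r = 1$ with $\beta_r$ itself in $S^{(A_{i,j,t,x})_{\mathfrak{n}}}(\nu)$) and the second sum lies in $F$.

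For the converse I argue the contrapositive: assume $Q_r \notin A_{i,j,t,x}$ for infinitely many $r$, so by the stability recursion $\chi_r \neq 1$ for every $r$. Suppose toward a contradiction that $S^{R_{\mathfrak{m}}}(\nu) = \bigcup_{f \in F}\big(f + S^{(A_{i,j,t,x})_{\mathfrak{n}}}(\nu)\big)$ for some finite $F$. Applying pigeonhole to $\{\beta_r\}$, fix $f^{*} \in F$ and an infinite subset $\{r_i\}$ with $\alpha_i := \beta_{r_i} - f^{*} \in S^{(A_{i,j,t,x})_{\mathfrak{n}}}(\nu)$. Any element of $S^{(A_{i,j,t,x})_{\mathfrak{n}}}(\nu)$ admits a representation $\sum c_l \beta_l$ with $\prod \chi_l^{c_l} = 1$ (as the $\nu$-value of an invariant element, whose isotypic expansion in the $Q_l$ is supported only on the trivial character), so matching characters in $\beta_{r_i} = f^{*} + \alpha_i$ forces $f^{*}$ to carry the nontrivial character $\chi_{r_i} \neq 1$; in particular, $f^{*} > 0$. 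The strict monotonicity $\beta_0 < \beta_1 < \cdots$ of the generating sequence, combined with $\alpha_i < \beta_{r_i}$, forces every nonnegative representation $\alpha_i = \sum c_l^{(i)} \beta_l$ to have support in $\{0, \ldots, r_i - 1\}$ (else $c_l^{(i)} \beta_l \geq \beta_{r_i} > \alpha_i$). Fixing one representation of each $f \in F$ in the $\beta_l$'s and letting $L$ be the maximal index occurring, for $r_i > L$ one obtains $\beta_{r_i} = f^{*} + \alpha_i \in S(\beta_0, \ldots, \beta_{r_i - 1})$, contradicting the defining non-redundancy property $\beta_{r_i} \notin S(\beta_0, \ldots, \beta_{r_i - 1})$ of the Spivakovsky / Cutkosky-Vinh generating sequence. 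The main obstacle is not the semigroup bookkeeping but mobilizing the two structural ingredients from generating-sequence theory---the character recursion $\chi_{r+1} = \chi_r^{n_r}$ and the non-redundancy $\beta_r \notin S(\beta_0, \ldots, \beta_{r-1})$---on which everything else rests.
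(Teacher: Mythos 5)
Your proposal is correct and rests on the same three structural facts as the paper's proof: the divisibility $d(N)\mid d(M)$ for $M\ge N$ (which you repackage as the character recursion $\chi_{r+1}=\chi_r^{\overline{m_r}}$), the explicit description of $S^{(A_{i,j,t,x})_{\mathfrak n}}(\nu)$ from Theorem~\ref{invariant subring theorem}, and the minimal-generating relation~(\ref{min gen relation}) for the essential values. There are two packaging differences worth noting. In the forward direction you build the finite transversal $F$ from the orders $k_r$ of the characters $\chi_r$ in $\hat H$, so that $\nu(Q_r^{k_r})=k_r\gamma_r$ lands in $S^{(A_{i,j,t,x})_{\mathfrak n}}(\nu)$; the paper instead uses common denominators $d_i$ with $d_i\gamma_i\in\gamma_N\ZZ$. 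Both work, and yours is slightly more intrinsic. In the converse you avoid the paper's preliminary lemma (that $\gamma_r\in S^{(A_{i,j,t,x})_{\mathfrak n}}(\nu)\Leftrightarrow Q_r\in A_{i,j,t,x}$, whose $\overline{m_r}=1$ case requires a separate computation) by restricting attention to essential values and applying pigeonhole, rather than the paper's minimal-$L$ argument. Two cautions. First, the ``matching characters'' step should be made precise: fix nonnegative $\beta_l$-representations of $f^*$ and of $\alpha_i$ (the latter coming from Theorem~\ref{invariant subring theorem} with the trivial-character constraint), add them, and invoke~(\ref{min gen relation}); this forces the combined coefficient at $\beta_{r_i}$ to be $1$ and everything else $0$. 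If the $1$ comes from $\alpha_i$'s representation the character constraint reads $\chi_{\sigma(r_i)}=1$, a contradiction, while if it comes from $f^*$'s fixed representation then $f^*=\beta_{r_i}$, impossible for infinitely many distinct $r_i$. Done carefully, this already gives the contradiction, so your subsequent support/non-redundancy paragraph is redundant. Second, be careful with the overload of $\beta_r$: the non-redundancy $\beta_r\notin S(\beta_0,\dots,\beta_{r-1})$ is true only for the essential values $\gamma_{\sigma(r)}$, not for arbitrary $\gamma_r$ (when $\overline{m_r}=1$ one has $\gamma_r\in S(\gamma_0,\dots,\gamma_{r-1})$), so you must run the pigeonhole over the essential subsequence and then transfer back to all $Q_r$ via the stability recursion.
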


\begin{Theorem} Let $R_{\mathfrak{m}} = K[X,Y]_{(X,Y)}$ and $H_{i,j,t,x}$ be a subgroup of $\mathbb{U}_m \times \mathbb{U}_n$.
	\begin{itemize}
		\item[1)] $\exists$ a rational rank $1$ non discrete valuation $\nu$ dominating $R_{\mathfrak{m}}$ with a generating sequence (\ref{eq3}) of eigenfunctions for $H_{i,j,t,x} \Longleftrightarrow (\frac{m}{i}, \frac{n}{j}) = t$. 
		\item[2)] If $(\frac{m}{i}, \frac{n}{j}) = t = 1$, then $S^{R_{\mathfrak{m}}} (\nu)$ is a finitely generated $S^{(A_{i,j,t,x})_{\mathfrak{n}}} (\nu)$-module for all rational rank $1$ non discrete valuations $\nu$ which dominate $R_{\mathfrak{m}}$ and have a generating sequence (\ref{eq3}) of eigenfunctions for $H_{i,j,t,x}$.
		\item[3)] If $(\frac{m}{i}, \frac{n}{j}) = t > 1$, then $S^{R_{\mathfrak{m}}} (\nu)$ is not a finitely generated $S^{(A_{i,j,t,x})_{\mathfrak{n}}} (\nu)$-module for all rational rank $1$ non discrete valuations $\nu$ which dominate $R_{\mathfrak{m}}$ and have a generating sequence (\ref{eq3}) of eigenfunctions for $H_{i,j,t,x}$. 
	\end{itemize}
\end{Theorem}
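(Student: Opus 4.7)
My plan is to use the preceding Proposition, which reduces each statement to asking whether $Q_r \in A_{i,j,t,x}$ for all sufficiently large $r$. Because each $Q_r$ is an eigenfunction, it carries a character $\chi_r \in \widehat{H_{i,j,t,x}}$ defined by $g \cdot Q_r = \chi_r(g) Q_r$, and $Q_r \in A_{i,j,t,x}$ exactly when $\chi_r$ is trivial. From the Spivakovsky--Cutkosky--Vinh construction,
\[
Q_{r+1} = Q_r^{\bar{n}_r} - \lambda_r \prod_{k<r} Q_k^{a_k^{(r)}} + (\text{higher-}\nu\text{ terms}),
\]
with $\bar{n}_r \beta_r = \sum_{k<r} a_k^{(r)} \beta_k$ the minimal relation in the value group and $\bar{n}_r \geq 2$ by non-discreteness, the eigenfunction condition at stage $r+1$ forces
\[
\chi_{r+1} = \chi_r^{\bar{n}_r} = \prod_{k<r} \chi_k^{a_k^{(r)}} \quad \text{in } \widehat{H_{i,j,t,x}}.
\]
Applied at $r = 1$, this places $\chi_2$ inside $\langle \chi_X \rangle \cap \langle \chi_Y \rangle$, and inductively $\chi_r$ lies in this intersection for every $r \geq 2$.

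\medskip

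For Part 1, Theorem 1.2 identifies $\chi_X, \chi_Y \in \widehat{H_{i,j,t,x}}$ as the restricted coordinate projections and pins down the order of $\langle \chi_X \rangle \cap \langle \chi_Y \rangle$ in terms of $t$. In the forward direction, I expand the identity $\chi_Y^{\bar{n}_1} = \chi_X^{a_0^{(1)}}$ with $\gcd(\bar{n}_1, a_0^{(1)}) = 1$: writing each side in terms of generators of $\langle \chi_X \rangle$ and $\langle \chi_Y \rangle$ forces $(n/j)/t \mid \bar{n}_1$ and $(m/i)/t \mid a_0^{(1)}$, and the coprimality then yields $\gcd\bigl((n/j)/t, (m/i)/t\bigr) = 1$, which is $(m/i, n/j) = t$. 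For the converse, assuming $(m/i, n/j) = t$, I take $\bar{n}_1 = (n/j)/t$ and $a_0^{(1)} = (m/i)/t$ (coprime by hypothesis), set $\beta_1/\beta_0 = a_0^{(1)}/\bar{n}_1$, and inductively choose each $\beta_{r+1}$ from the dense set of positive rationals for which the next character balance is achievable and $\bar{n}_r \geq 2$; the hypothesis $(m/i, n/j) = t$ guarantees a valid choice at every stage.

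\medskip

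For Part 2 ($t = 1$), the intersection $\langle \chi_X \rangle \cap \langle \chi_Y \rangle$ is trivial, so $\chi_r$ is trivial for every $r \geq 2$; hence $Q_r \in A_{i,j,1,x}$ for $r \geq 2$ and the Proposition gives finite generation. For Part 3 ($t > 1$), I claim $\chi_r$ is non-trivial for every $r \geq 1$. The base case $\chi_1 = \chi_Y$ is non-trivial since $j < n$ (else $t = 1$); and $\chi_2$ is non-trivial because triviality of $\chi_2$ would force both $(n/j) \mid \bar{n}_1$ and $(m/i) \mid a_0^{(1)}$, whence $\gcd(\bar{n}_1, a_0^{(1)}) \geq \gcd(n/j, m/i) = t > 1$, contradicting the coprimality of the minimal Spivakovsky relation. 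The inductive step has the same flavor: triviality of $\chi_{r+1}$ in the cyclic intersection group $\cong \ZZ/t$ would make both identities $\chi_{r+1} = \chi_r^{\bar{n}_r}$ and $\chi_{r+1} = \prod_{k < r} \chi_k^{a_k^{(r)}}$ impose simultaneous $t$-divisibilities on $\bar{n}_r$ and on the $a_k^{(r)}$, which combined with the twist classification $x \in (\ZZ/t)^\times$ from Theorem 1.2 contradicts the minimality of the relation. Hence $Q_r \notin A_{i,j,t,x}$ for every $r \geq 1$, and the Proposition yields non-finite generation.

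\medskip

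The main obstacle is the inductive step in Part 3: proving uniformly in $\nu$ and in the choice of Spivakovsky representation that the shared $t$-torsion forced by the triviality of $\chi_{r+1}$ is incompatible with the minimality of $\bar{n}_r$. This rests on carefully unpacking the twist datum $x$ from Theorem 1.2 and tracking gcd constraints through the recursion. By contrast, Parts 1 and 2 fall out quickly once the character framework is in place.
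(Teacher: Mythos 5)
Your character-theoretic reformulation is genuinely different from the paper's approach and is a nice idea: assigning to each eigenfunction $Q_r$ its character $\chi_r\in\widehat{H_{i,j,t,x}}$, observing $\chi_r=\chi_Y^{d(r)}$ and the recursion $\chi_{r+1}=\chi_r^{\overline{m}_r}=\chi_X^{c_0}\chi_Y^{c_1}\cdots\chi_{r-1}^{c_{r-1}}$, and locating every $\chi_r$ with $r\ge 2$ in the cyclic group $C=\langle\chi_X\rangle\cap\langle\chi_Y\rangle$ of order $t$ (which indeed has order $t$ since $|H|=(m/i)(n/j)/t$). This cleanly recovers Lemma~\ref{not f.g condition} in the form ``$\chi_r=1 \Leftrightarrow (n/j)\mid d(r)$.'' Your forward direction of Part~1 works and is slicker than Proposition~\ref{gcd > t}: $(n/j)/t\mid\overline{m}_1$, $(m/i)/t\mid c_0$, and $\gcd(\overline{m}_1,c_0)=1$ force $(m/i,n/j)=t$. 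Your Part~2 is also correct and considerably shorter than the paper's (which invokes the Chinese Remainder Theorem): when $t=1$ the group $C$ is trivial so $\chi_r=1$, i.e.\ $Q_r\in A_{i,j,1,1}$, for all $r\ge 2$, and Proposition~\ref{f.g iff all q_i after some i} finishes.

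There are, however, two genuine gaps. \textbf{Part~1, converse.} Your proposed first step $\overline{m}_1=(n/j)/t=N$, $c_0=(m/i)/t=M$ does not in general make $Q_2=Y^{\overline{m}_1}-\lambda X^{c_0}$ an eigenfunction. Writing $\chi_X^M=(\chi_Y^N)^e$ for the unit $e\in(\ZZ/t)^\times$ determined by $\alpha,\beta,x$, the eigenfunction requirement $\chi_Y^N=\chi_X^M$ is exactly the congruence $e\equiv 1\pmod t$, which fails for generic $(\alpha,\beta,x)$ when $t>1$. This is precisely why the paper's construction for $t>1$ goes through $w_1,w_2$, Dirichlet primes $r_l\equiv r\pmod t$ with $rx\equiv 1\pmod t$, and the carefully engineered sequences $(a_l),(b_l)$. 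The assertion that ``the hypothesis $(m/i,n/j)=t$ guarantees a valid choice at every stage'' is doing all the work and is not established; one must verify simultaneously that the character congruence mod~$t$ is met, that $\gcd$ of the chosen numerator and denominator is $1$ so that $\overline{m}_l$ is as intended, and that the growth condition $\gamma_{l+1}>\overline{m}_l\gamma_l$ holds — and these constraints interact.

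\textbf{Part~3, inductive step.} You correctly show $\chi_2\ne 1$ (in fact $\operatorname{ord}(\chi_2)=t$ since $\overline{m}_1/N$ is a unit mod~$t$), but the jump from ``$\chi_{r+1}=1$ would force $t$-divisibilities on $\overline{m}_r$ and on the $c_k$'' to ``this contradicts minimality of $\overline{m}_r$'' is exactly the hard content of the paper's Propositions~\ref{p nmid N} and~\ref{p divides N}, and the character framework alone does not supply it. The character relations constrain $\overline{m}_r,c_0,\dots,c_{r-1}$ only modulo $t$ (or modulo a prime $p\mid t$), while minimality of $\overline{m}_r$ is a statement about the value group $G(\gamma_0,\dots,\gamma_{r-1})\subset\QQ$. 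Linking the two requires tracking an arithmetic invariant of the $\gamma_k$ alongside the characters — in Proposition~\ref{p nmid N} it is the congruence $a_k\equiv b_k\,M N_1 x w_2\overline{w_1}\,d(k)\pmod p$ for $\gamma_k=a_k/b_k$ in lowest terms, proved by a delicate induction; in Proposition~\ref{p divides N} it is a different, equally delicate, shape for $\gamma_k$. You flag this as ``the main obstacle,'' and rightly so: without producing the analogue of those inductive arithmetic lemmas, the argument that $\chi_r\ne 1$ for all $r$ when $t>1$ is incomplete. In short, the character framework is an elegant bookkeeping device that streamlines Parts 1(forward) and 2, but it does not by itself replace the arithmetic core of Parts 1(converse) and 3.
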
 

In Section 5, we show that for the valuations we consider,  the restriction of $\nu$ to the quotient field of $A_{i,j,t,x}$ does not split in $K[X,Y]_{\mathfrak m}$. The failure of non splitting can be an obstruction to finite generation of $S^{\omega^*}(B)$ as an  $S^{\omega}(A)$-module (Theorem 5 [\ref{C1}]), but our result shows that it is not a sufficient condition.

\section{Subgroups of $U_m \times U_n$}Let $K$ be an algebraically closed field of characteristic zero. Let $\alpha$ be a primitive $m$-th root of unity, and $\beta$ be a primitive $n$-th root of unity, in $K$. We denote $\mathbb{U}_m = <\alpha>$, and $\mathbb{U}_n = <\beta>$, which are multiplicative cyclic groups of orders $m$ and $n$ respectively. 

\begin{Lemma}[Goursat]  Let $A$ and $B$ be two groups. There is a bijective correspondence between subgroups $G \leq A \times B$, and 5-tuples $\{ \overline{G_1}, G_1, \overline{G_2}, G_2, \theta\}$, where 
	\begin{equation*}
	G_1 \trianglelefteq \overline{G_1} \leq A \, , \,  G_2 \trianglelefteq \overline{G_2} \leq B \, , \,  \theta : \frac{\overline{G_1}}{G_1} \rightarrow \frac{\overline{G_2}}{G_2} \text{ is an isomorphism.}
	\end{equation*}
\end{Lemma}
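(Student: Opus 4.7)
The plan is to exhibit explicit inverse constructions between subgroups of $A \times B$ and the $5$-tuples described. Starting from a subgroup $G \leq A \times B$, I would use the two coordinate projections $\pi_1 : A \times B \to A$ and $\pi_2 : A \times B \to B$. Set $\overline{G_1} := \pi_1(G) \leq A$ and $\overline{G_2} := \pi_2(G) \leq B$, and
\[
G_1 := \{ a \in A \mid (a, e_B) \in G \}, \qquad G_2 := \{ b \in B \mid (e_A, b) \in G \}.
\]
Clearly $G_1 \leq \overline{G_1}$ and $G_2 \leq \overline{G_2}$. To verify $G_1 \trianglelefteq \overline{G_1}$, take any $a \in \overline{G_1}$ and choose $b \in B$ with $(a,b) \in G$; for $g \in G_1$, the conjugate $(a,b)(g,e_B)(a,b)^{-1} = (aga^{-1}, e_B)$ lies in $G$, so $aga^{-1} \in G_1$. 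The statement $G_2 \trianglelefteq \overline{G_2}$ is symmetric.

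Next I would define $\theta : \overline{G_1}/G_1 \to \overline{G_2}/G_2$ by $\theta(aG_1) := bG_2$ for any $b$ with $(a,b) \in G$, and check this is a well-defined isomorphism. Independence of the choice of $b$ reduces to: if $(a,b), (a,b') \in G$ then $(a,b)^{-1}(a,b') = (e_A, b^{-1}b') \in G$, so $b^{-1}b' \in G_2$. Independence of the representative $a$ is analogous and uses $G_1 \trianglelefteq \overline{G_1}$. That $\theta$ is a homomorphism follows from $G$ being closed under multiplication. Surjectivity is immediate from $\overline{G_2} = \pi_2(G)$, and injectivity is seen by: if $\theta(aG_1) = G_2$ with witness $(a,b) \in G$ and $b \in G_2$, then $(a, e_B) = (a,b)(e_A, b^{-1}) \in G$, so $a \in G_1$.

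Conversely, given a $5$-tuple $(\overline{G_1}, G_1, \overline{G_2}, G_2, \theta)$, define
\[
G := \{ (a, b) \in \overline{G_1} \times \overline{G_2} \mid \theta(aG_1) = bG_2 \}.
\]
Since $\theta$ is a homomorphism, $G$ is a subgroup of $A \times B$. It remains to verify the two constructions are mutually inverse: recovering the projections, the slice subgroups, and $\theta$ from $G$ exactly reproduces the starting $5$-tuple, and starting from a subgroup, unraveling the definitions gives back the same subgroup because $(a,b) \in G$ iff $a \in \overline{G_1}$, $b \in \overline{G_2}$ and the coset relation $\theta(aG_1) = bG_2$ holds. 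The main obstacle is the careful bookkeeping for well-definedness of $\theta$ and the verification of the inverse property; the key structural input that makes everything go through is precisely the normality $G_i \trianglelefteq \overline{G_i}$, which is what forces the quotients to be groups and $\theta$ to be a genuine group isomorphism rather than a bare map of coset spaces.
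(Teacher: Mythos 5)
Your proposal follows the same route as the paper: you construct the 5-tuple from $G$ via the projections and the slices $\{a : (a,e_B)\in G\}$, $\{b:(e_A,b)\in G\}$, verify normality by conjugation, and invert by taking $G = \{(a,b)\in\overline{G_1}\times\overline{G_2} : \theta(aG_1)=bG_2\}$, which is the paper's $p^{-1}(G_\theta)$. The only difference is bookkeeping style — you verify well-definedness of $\theta$ and argue the constructions are mutually inverse, whereas the paper argues injectivity and surjectivity of the correspondence separately — but the content is identical.
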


\begin{proof}
	Let $\pi_1$ and $\pi_2$ denote the first and second projection maps respectively. Let $i_1 : A \rightarrow A \times B$ and $i_2 : B \rightarrow A \times B$ denote the inclusion maps. Given a subgroup $G$ of $A \times B$, we construct the elements of the 5-tuple as follows,
	\begin{align*}
	\overline{G_1} = \pi_1 (G), \, &  G_1 = i_1 ^{-1} (G) \\
	\overline{G_2} = \pi_2 (G) , \, & G_2 = i_2 ^{-1} (G) \\
	\theta : \frac{\overline{G_1}}{G_1} \rightarrow \frac{\overline{G_2}}{G_2} \text{ is defined by } &\theta (\overline{a}) = \overline{b}, \text{ if } (a,b) \in G.
	\end{align*} 
	
	By construction, $\overline{G_1} = \{ a \in A \, | \, \exists b \in B \text{ with } (a,b) \in G\}$ and $\textbf{}G_1 = \{ a \in A \, | \, (a,1) \in G\} $.
	Let $x \in G_1, a \in \overline{G_1}$. Then $(x,1) \in G$ and $(a,b) \in G$ for some $b \in B$ implies $(a,b)(x,1)(a,b)^{-1} \in G \Longrightarrow axa^{-1} \in G_1 \Longrightarrow G_1 \trianglelefteq \overline{G_1} .$ 
	Similarly, we have $G_2 \trianglelefteq \overline{G_2}$. 
	\newline Conversely suppose we are given a 5-tuple $\{ \overline{G_1}, G_1, \overline{G_2}, G_2, \theta\}$ satisfying the conditions of the theorem. Let $p : \overline{G_1} \times \overline{G_2} \rightarrow  \frac{\overline{G_1}}{G_1} \times \frac{\overline{G_2}}{G_2} $ be the natural surjection. Let $G_{\theta} < \frac{\overline{G_1}}{G_1} \times \frac{\overline{G_2}}{G_2}$ denote the graph of $\theta$. Then $G = p^{-1} (G_{\theta})$. 
	\par Now we show the bijectivity of the correspondence. First we establish injectivity. Suppose $G \neq H$ be two subgroups of $A \times B$, such that the corresponding 5-tuples are equal, if possible. Thus, $\{ \overline{G_1}, G_1, \overline{G_2}, G_2, \theta_G \} = \{ \overline{H_1}, H_1, \overline{H_2}, H_2, \theta_H \}$. Now $G \neq H \Longrightarrow \exists (a,b) \in G - H$, without loss of generality. But this contradicts $\theta_G = \theta_H$, since $\theta_G(\overline{a}) = \overline{b}$, but $\theta_H(\overline{a}) \neq \overline{b}$. So this correspondence is injective.
	\par Now we establish the surjectivity of the correspondence. Given a 5-tuple satisfying the conditions of the theorem, we construct a subgroup $G \leq A \times B$. Now, $G = p^{-1} (G_{\theta}) = \{ (g,h) \, | \, \overline{h} = \theta (\overline{g}), g \in \overline{G_1}, h \in \overline{G_2} \}$. $a \in \pi_1 (G) \Longrightarrow (a,b) \in G \text{ for some } b \in B \Longrightarrow a \in \overline{G_1}$. Conversely, $a \in \overline{G_1} \Longrightarrow \theta (\overline{a}) = \overline{b}$ for some $b \in \overline{G_2} \Longrightarrow (a,b) \in p^{-1} (G_{\theta}) = G \Longrightarrow a \in \pi_1 (G)$. Thus we have shown $\pi_1 (G) = \overline{G_1}$. Now, $a \in i_1 ^{-1} (G) \Longleftrightarrow (a,1) \in G = p^{-1} (G_{\theta}) \Longleftrightarrow p (a,1) = (\overline{a}, \overline{1}) \in G_{\theta} \Longleftrightarrow \theta (\overline{a}) = \overline{1} \Longleftrightarrow \overline{a} = \overline{1} \Longleftrightarrow a \in G_1$. Similarly we show, $\overline{G_2} = \pi_2 (G), G_2 = i_2 ^{-1} (G)$. 
	
\end{proof}

\begin{Theorem}\label{i,j,t,x theorem}
	Given positive integers $i,j,t,x$ satisfying the given conditions 
	\begin{equation*}
	i | m, \, j | n , \, t | \frac{m}{i}, \, t | \frac{n}{j} , \, (x,t) = 1, \, 1 \leq x \leq t  
	\end{equation*}
	let
	\begin{equation}\label{formula - i,j,t,x}
	H_{i,j,t,x} = \{ ( \alpha^{ai}, \beta^{bj} ) \, | \, b \equiv ax (\text{mod } t) \}.
	\end{equation}
	Then the $H_{i,j,t,x}$ are subgroups of $\mathbb{U}_m \times \mathbb{U}_n$. And given any subgroup $G$ of $\mathbb{U}_m \times \mathbb{U}_n$, there exist unique $i,j,t,x$ satisfying the above conditions such that $G = H_{i,j,t,x}$. 
\end{Theorem}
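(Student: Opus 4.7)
The plan is to apply Goursat's lemma (Lemma 1.1) and translate its $5$-tuple data into the four parameters $(i,j,t,x)$, using that every subgroup of a cyclic group has a canonical description in terms of a divisor of the order.

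First I would record the cyclic-group bookkeeping. Every subgroup of $\mathbb{U}_m = \langle\alpha\rangle$ is of the form $\langle\alpha^d\rangle$ for a unique $d \mid m$, and $\langle\alpha^{d'}\rangle \le \langle\alpha^d\rangle$ iff $d \mid d'$. So given $G \le \mathbb{U}_m \times \mathbb{U}_n$, Goursat gives uniquely determined subgroups $G_1 \trianglelefteq \overline{G_1} \le \mathbb{U}_m$ and $G_2 \trianglelefteq \overline{G_2} \le \mathbb{U}_n$, which we write as
\[
\overline{G_1} = \langle\alpha^i\rangle,\ G_1 = \langle\alpha^{ti}\rangle \quad\text{with } i\mid m,\ t\mid \tfrac{m}{i},
\]
\[
\overline{G_2} = \langle\beta^j\rangle,\ G_2 = \langle\beta^{t'j}\rangle \quad\text{with } j\mid n,\ t'\mid \tfrac{n}{j}.
\]
Then $|\overline{G_1}/G_1| = t$ and $|\overline{G_2}/G_2| = t'$, and the existence of the isomorphism $\theta$ from Goursat forces $t = t'$. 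In particular, the divisibility conditions $i\mid m$, $j\mid n$, $t\mid m/i$, $t\mid n/j$ from the hypothesis are exactly the conditions needed for the tuple to come from an honest pair of nested subgroups.

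Next I would pin down $\theta$. Both quotients $\overline{G_1}/G_1$ and $\overline{G_2}/G_2$ are cyclic of order $t$, generated by $\overline{\alpha^i}$ and $\overline{\beta^j}$ respectively. Any isomorphism between them is determined by sending the generator to a generator, so $\theta(\overline{\alpha^i}) = \overline{\beta^{xj}}$ for some integer $x$ with $(x,t)=1$, unique modulo $t$; the normalization $1 \le x \le t$ selects a unique representative. Pulling this back through the surjection $p: \overline{G_1}\times\overline{G_2} \to \overline{G_1}/G_1 \times \overline{G_2}/G_2$, an element $(\alpha^{ai}, \beta^{bj}) \in \overline{G_1}\times\overline{G_2}$ lies in $G = p^{-1}(G_\theta)$ iff $\theta(\overline{\alpha^{ai}}) = \overline{\beta^{bj}}$, i.e.\ $\overline{\beta^{axj}} = \overline{\beta^{bj}}$, i.e.\ $\beta^{(ax-b)j} \in \langle\beta^{tj}\rangle$, which (since $tj\mid n$) is equivalent to $t \mid ax-b$, i.e.\ $b \equiv ax \pmod t$. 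This is exactly the description of $H_{i,j,t,x}$ in (\ref{formula - i,j,t,x}).

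For the converse direction, starting from any $(i,j,t,x)$ satisfying the stated conditions, the same computation shows that $H_{i,j,t,x} = p^{-1}(G_\theta)$ for the corresponding $\theta$, so $H_{i,j,t,x}$ is a subgroup (being the inverse image of one) and its Goursat $5$-tuple recovers the starting data. Uniqueness of $(i,j,t,x)$ then follows from the uniqueness in Goursat's lemma together with the unique representability of each $\overline{G_\ell}$ and $G_\ell$ by a divisor of $m$ or $n$, and the normalization $1 \le x \le t$ on $\theta$.

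The only place that requires care is the step forcing $t = t'$ and verifying that the congruence description of $H_{i,j,t,x}$ matches the preimage of the graph of $\theta$; once the cyclic identifications are fixed, everything else is bookkeeping in Goursat's framework.
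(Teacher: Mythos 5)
Your proposal is correct and follows essentially the same route as the paper: invoke Goursat's lemma, identify each nested pair of subgroups of a cyclic group by divisors, pin down $\theta$ by where it sends the generator, and compute $p^{-1}(G_\theta)$ to obtain the congruence description. The only stylistic difference is that you obtain the subgroup property of $H_{i,j,t,x}$ for free as a preimage $p^{-1}(G_\theta)$, whereas the paper first verifies closure directly and checks that the defining congruence is well defined on representatives; the substance is the same.
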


\begin{proof} We first show that the condition $b \equiv ax (\text{mod }t)$ is well defined under the given conditions on $i,j,t,x$. Suppose $(\alpha^{a_1 i}, \beta^{b_1 j}) = (\alpha^{a_2 i}, \beta^{b_2 j})$, that is, $a_1 i \equiv a_2 i (\text{mod }m)$,  and $b_1 j \equiv b_2 j (\text{mod }n)$. Then, $\frac{m}{i} \mid (a_1 - a_2)$ and $\frac{n}{j} \mid (b_1 - b_2)$. Thus, $t \mid (a_1 - a_2)$ and $t \mid (b_1 - b_2)$, hence $t \mid (b_1 - b_2) - (a_1 - a_2)x$. So, $[b_1 - a_1 x] \equiv [b_2 - a_2 x] (\text{mod }t)$. 
	\par{} We now show $H_{i,j,t,x}$ is a subgroup of $\mathbb{U}_m \times \mathbb{U}_n$. Taking $a = b = 0$, we have $(1,1) \in H_{i,j,t,x}$. Let  $( \alpha^{ai}, \beta^{bj} ), \, ( \alpha^{ci}, \beta^{dj} ) \in H_{i,j,t,x}$ be distinct elements. Then $b \equiv ax (\text{mod t})$, and $d \equiv cx (\text{mod t})$. Hence $(b - d) \equiv (a - c)x (\text{ mod t})$. So, $( \alpha^{(a-c)i}, \beta^{(b-d)j} ) = ( \alpha^{ai}, \beta^{bj} )( \alpha^{ci}, \beta^{dj} )^{-1} \in H_{i,j,t,x}$. Hence $H_{i,j,t,x}$ is a subgroup. 
	\par{} By Goursat's Lemma, the subgroups of $\mathbb{U}_m \times \mathbb{U}_n$ are in bijective correspondence with the 5-tuples $\{ \overline{G_1}, G_1, \overline{G_2}, G_2, \theta\}$, where $G_1 \trianglelefteq \overline{G_1} \leq \mathbb{U}_m \, , \,  G_2 \trianglelefteq \overline{G_2} \leq \mathbb{U}_n \, , \,  \theta : \frac{\overline{G_1}}{G_1} \simeq \frac{\overline{G_2}}{G_2}$. Now any subgroup of $\mathbb{U}_m = <\alpha>$ is of the form $H_i = <\alpha^i> = \mathbb{U}_{\frac{m}{i}}$, where $i|m$. Since $H_i$ is an abelian group, any subgroup is normal. Any subgroup of $H_i$ is of the form $H_{i t_i} = <\alpha^{i t_i}> = \mathbb{U}_{\frac{m}{i t_i}}$, where $t_i | \frac{m}{i}$. Similarly, any subgroup of $\mathbb{U}_n$ is of the form $H_j = <\beta^j> = \mathbb{U}_{\frac{n}{j}}$, where $j | n$. And any subgroup of $H_j$ is of the form $H_{j t_j} = <\beta^{j t_j}> = \mathbb{U}_{\frac{n}{j t_j}}$, where $t_j | \frac{n}{j}$. Now, $\frac{\mathbb{U}_{\frac{m}{i}}}{\mathbb{U}_{\frac{m}{i t_i}}} \simeq \mathbb{U}_{t_i}$ and $\frac{\mathbb{U}_{\frac{n}{j}}}{\mathbb{U}_{\frac{n}{j t_j}}} \simeq \mathbb{U}_{t_j}$. So, $\theta_{ij} : \frac{\mathbb{U}_{\frac{m}{i}}}{\mathbb{U}_{\frac{m}{i t_i}}} \simeq \frac{\mathbb{U}_{\frac{n}{j}}}{\mathbb{U}_{\frac{n}{j t_j}}} \Longleftrightarrow t_i = t_j$. Define $t = t_i = t_j$. Thus the subgroups of $\mathbb{U}_m \times \mathbb{U}_n$ are in bijective correspondence with the set of 5-tuples, 
	\begin{equation}\label{5 tuples properties}
	\begin{split}
	(<\alpha^{it}>, <\alpha^i>, <\beta^{jt}>, &<\beta^j>, \theta_{ij}) \\
	\text{where } i|m, \, j|n, \, t | \frac{m}{i}, \, t| \frac{n}{j} \text{ and } &\theta_{ij} : \frac{<\alpha^i>}{<\alpha^{it}>} \simeq \frac{<\beta^j>}{<\beta^{jt}>}. \\	\end{split}
	\end{equation}
	Any such isomorphism is given by $\theta_{ij} (\overline{\alpha^i}) = \overline{\beta^{xj}}$, where $(x,t) = 1, \, 1 \leq x \leq t$, and $\overline{v}$ denotes the residue of an element $v \in <\alpha^{i}>$ in $\frac{<\alpha^i>}{<\alpha^{it}>}$, or the residue of an element $v \in <\beta^{j}>$ in $\frac{<\beta^j>}{<\beta^{jt}>}$. 
	\newline If $G_{\theta_{ij}}$ denotes the graph of $\theta_{ij}$, then $G_{\theta_{ij}} = \{ (\overline{\alpha^{ri}}, \overline{\beta^{rxj}}) | \, r \in \mathbb{N}  \}$. Denoting the natural surjection $p : <\alpha^i> \times <\beta^j> \longrightarrow \frac{<\alpha^i>}{<\alpha^{it}>} \times \frac{<\beta^j>}{<\beta^{jt}>}$, we have
	\begin{align*}
	p^{-1} (G_{\theta_{ij}}) &= \{ (\alpha^{ai}, \beta^{bj}) \, | \, \alpha^{\overline{ai}} = \alpha^{\overline{ri}}, \beta^{\overline{bj}} = \beta^{\overline{rxj}}, \text{ for some } r \in \mathbb{N} \} \\
	&= \{ (\alpha^{ai}, \beta^{bj}) \, | \, \alpha^{(a-r)i} \in <\alpha^{it}>, \beta^{(b-rx)j} \in <\beta^{jt}> , \text{ for some } r \in \mathbb{N}\} \\
	&= \{ (\alpha^{ai}, \beta^{bj}) \, | \, a \equiv r (\text{mod t}), b \equiv rx (\text{mod t}), \text{ for some } r \in \mathbb{N} \}.
	\end{align*}
	We now show that, 
	\begin{equation}\label{a = r, b = rx}
	a \equiv r (\text{mod t}), b \equiv rx (\text{mod t}), \text{ for some } r \in \mathbb{N} \Longleftrightarrow b \equiv ax (\text{mod t}) .\\
	\end{equation} 
	If $a \equiv r (\text{mod t}), b \equiv rx (\text{mod t})$, then $a - r = td$ for some integer $d$. Then $b - ax = b - (td + r)x \equiv b - rx (\text{mod t}) \equiv 0 (\text{mod t}) \Longrightarrow b \equiv ax (\text{mod t})$. Conversely if $b \equiv ax (\text{mod t})$, and $a \equiv r (\text{mod t})$ for some $r$, then $ b \equiv rx (\text{mod t})$. Thus we have established (\ref{a = r, b = rx}). So, $p^{-1} (G_{\theta_{ij}}) = \{ (\alpha^{ai}, \beta^{bj}) \, | \, b \equiv ax (\text{mod t}) \}$. Thus we have that any subgroup of $\mathbb{U}_m \times \mathbb{U}_n$ is of the form 
	\begin{equation*}
	H_{i,j,t,x} = \{ (\alpha^{ai}, \beta^{bj}) \, | \, b \equiv ax (\text{mod t}) \, ; \, i | m, \, j | n , \, t | \frac{m}{i}, \, t | \frac{n}{j} , \, (x,t) = 1, \, 1 \leq x \leq t \}. 
	\end{equation*}
	\par{} We now establish uniqueness. Let $(i_1, j_1, t_1, x_1)$ and $(i_2, j_2, t_2, x_2)$ be two distinct quadruples satisfying the conditions of the theorem, such that $H_{i_1, j_1, t_1, x_1} = H_{i_2, j_2, t_2, x_2}$. 
	From (\ref{5 tuples properties}), we observe $H_{i_1, j_1, t_1, x_1} = H_{i_2, j_2, t_2, x_2} $ implies
	\begin{align*}
	(<\alpha^{i_1 t_1}>, <\alpha^{i_1}>, <\beta^{j_1 t_1}>, <\beta^{j_1}>, \theta_{i_1 j_1}^{(1)}) &= (<\alpha^{i_2 t_2}>, <\alpha^{i_2}>, <\beta^{j_2 t_2}>, <\beta^{j_2}>, \theta_{i_2 j_2}^{(2)}) .\\ 
	\end{align*}
	Now, $<\alpha^{i_1}> = <\alpha^{i_2}> \Longrightarrow  |<\alpha^{i_1}>| = |<\alpha^{i_2}>| \Longrightarrow  m/{i_1} = m/ {i_2} \Longrightarrow  i_1 = i_2 = i.$ And, $<\alpha^{i t_1}> = <\alpha^{i t_2}> = m/ {i t_1} = m/{i t_2} = t_1 = t_2 = t$.	Similarly $j = j_1 = j_2$. Now, $\theta_{ij}^{(1)} = \theta_{ij}^{(2)} \Longrightarrow \, \theta_{ij}^{(1)} (\overline{\alpha^i}) = \theta_{ij}^{(2)} (\overline{\alpha^i}) \Longrightarrow \, \overline{\beta^{x_1 j}} = \overline{\beta^{x_2 j}} \text{ in } \frac{<\beta^j>}{<\beta^{tj}>} $. Thus, $t \mid |x_1 - x_2|$. Since $0 < x_1, x_2 \leq t$, we have $| x_1 - x_2 | = 0$, i.e. $x_1 = x_2$. Let $x = x_1 = x_2$. Then $(i,j,t,x) = (i_1, j_1, t_1, x_1) = (i_2, j_2, t_2, x_2)$ is unique. 	
	
\end{proof}

\begin{Proposition}\label{order of group}
	Let $i,j,t,x$ be positive integers satisfying the conditions of Theorem \ref{i,j,t,x theorem} such that $(\frac{m}{i}, \frac{n}{j}) = t$. Write $\frac{m}{i} = Mt$ and $\frac{n}{j} = Nt$ where $M,N \in \mathbb{Z}_{> 0}$ and $(M,N) = 1$. Then $|H_{i,j,t,x}| = MNt$.
\end{Proposition}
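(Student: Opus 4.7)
My plan is to set up an explicit bijection between $H_{i,j,t,x}$ and a set of pairs $(a,b)$ in a fundamental domain, and then count the pairs directly. Under the hypotheses, $\frac{m}{i} = Mt$ and $\frac{n}{j} = Nt$ with $M,N \in \ZZ_{>0}$ (and $\gcd(M,N) = 1$, from $(\frac{m}{i},\frac{n}{j}) = t$). I would let
\[
S = \{ (a,b) \in \ZZ \times \ZZ \ : \ 0 \leq a < Mt,\ 0 \leq b < Nt,\ b \equiv ax \ (\bmod \ t) \}
\]
and define $\Phi : S \to H_{i,j,t,x}$ by $\Phi(a,b) = (\alpha^{ai}, \beta^{bj})$. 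Showing that $|H_{i,j,t,x}| = MNt$ then reduces to showing that $\Phi$ is a bijection and computing $|S|$.

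For surjectivity, given any $(\alpha^{a'i}, \beta^{b'j}) \in H_{i,j,t,x}$ I would reduce $a'$ modulo $\frac{m}{i} = Mt$ and $b'$ modulo $\frac{n}{j} = Nt$ to land in $[0,Mt) \times [0,Nt)$, invoking the well-definedness of the congruence $b \equiv ax \pmod{t}$ already established in the opening paragraph of the proof of Theorem \ref{i,j,t,x theorem}. For injectivity, if $\Phi(a_1,b_1) = \Phi(a_2,b_2)$ then $\alpha^{a_1 i} = \alpha^{a_2 i}$ forces $\frac{m}{i} \mid (a_1 - a_2)$, i.e.\ $Mt \mid (a_1 - a_2)$; since both $a_i$ lie in $[0,Mt)$, this gives $a_1 = a_2$, and symmetrically $b_1 = b_2$.

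Finally, I would count $|S|$ by fibering over the first coordinate: for each of the $Mt$ admissible values of $a$, the constraint $b \equiv ax \pmod{t}$ pins down a single residue class modulo $t$, and the set $\{0, 1, \ldots, Nt-1\}$ contains exactly $N$ representatives of any given residue class modulo $t$, namely $ax \bmod t,\ (ax \bmod t) + t,\ \ldots,\ (ax \bmod t) + (N-1)t$. Hence $|S| = Mt \cdot N = MNt$, giving $|H_{i,j,t,x}| = MNt$. There is no real obstacle here; the only point that requires a moment's care is the well-definedness of the congruence condition on $(a,b)$ modulo $(Mt, Nt)$, and this was already verified at the start of the proof of Theorem \ref{i,j,t,x theorem}.
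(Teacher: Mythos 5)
Your proposal is correct and follows essentially the same route as the paper: identify $H_{i,j,t,x}$ with a fundamental domain of pairs $(a,b)$ with $0 \le a < Mt$, $0 \le b < Nt$, $b \equiv ax \pmod t$, then count $Mt$ choices for $a$ and $N$ admissible $b$ for each. The paper phrases the count in terms of the parameter $\lambda$ with $b = ax + \lambda t$, but this is the same fibering argument.
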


\begin{proof}
	Recall, $H_{i,j,t,x} = \{ (\alpha^{ai}, \beta^{bj}) \mid b \equiv ax (\text{mod }t)  \}$. We observe, as elements of $H_{i,j,t,x}$, $(\alpha^{a_1 i}, \beta^{b_1 j}) = (\alpha^{a_2 i}, \beta^{b_2 j})$ if and only if $a_1 \equiv a_2 (\text{mod }Mt)$ and $b_1 \equiv b_2 (\text{mod }Nt)$. Thus every element of $H_{i,j,t,x}$ has an unique representation,
	\begin{align} 
	H_{i,j,t,x} = \{ (\alpha^{ai}, \beta^{bj}) \mid b \equiv ax (\text{mod }t), \, 0 \leq a < Mt, \, 0 \leq b < Nt  \}.
	\end{align}
	Hence there is a bijective correspondence,
	\begin{align*}
	H_{i,j,t,x} &\longleftrightarrow \{ (a,b) \mid b \equiv ax (\text{mod }t), \, 0 \leq a < Mt, \, 0 \leq b < Nt, \, a,b \in \ZZ \} \\
	&\longleftrightarrow \{ (a, ax + \lambda t) \mid 0 \leq a < Mt, \, 0 \leq ax+\lambda t < Nt, \, a, \lambda \in \ZZ \} \\
	&\longleftrightarrow \{ (a, \lambda ) \mid  0 \leq a < Mt, \, 0 \leq \lambda + \frac{ax}{t} < N, \, a, \lambda \in \ZZ \}.
	\end{align*}
	Hence there are $Mt$ possible choices for $a$. And for each choice of $a$, there are $N$ possible choices for $\lambda$. Thus $|H_{i,j,t,x}| = MNt$.
\end{proof}

\section{Generating Sequences}\label{GS}

\par{} In this section we establish notation which will be used throughout the paper. Let $R = K[X,Y]$ be a polynomial ring in two variables over an algebraically closed field $K$ of characteristic zero. Let $\mathfrak{m} = (X,Y)$ be the maximal ideal of $R$. Then $\mathbb{U}_m \times \mathbb{U}_n$ acts on $R$ by $K$-algebra isomorphisms satisfying 
\begin{equation}\label{K-algebra action}
(\alpha^x, \beta^y) \cdot ( X^r Y^s) = \alpha^{rx} \beta^{sy} X^r Y^s.
\end{equation}
Thus, $R^{H_{i,j,t,x}} = \{ \sum_{r,s} c_{r,s} X^r Y^s \in R \, | \, \alpha^{rai} \beta^{sbj} = 1 \, \forall \, r,s, \, \forall \, b \equiv ax (\text{mod t}) \}$. 
\newline $f \in R$ is defined to be an eigenfunction of $H_{i,j,t,x}$ if $(\alpha^{ai}, \beta^{bj}) \cdot f = \lambda_{ab} f$ for some $\lambda_{ab} \in K$, for all  $(\alpha^{ai}, \beta^{bj}) \in H_{i,j,t,x}$. The eigenfunctions of $H_{i,j,t,x}$ are of the form  $f = \mathlarger{\sum_{r,s}}c_{r,s} X^r Y^s \in R \text{ such that } \alpha^{rai} \beta^{sbj} \text{ is a common constant } \forall \, r,s \text{ such that } c_{r,s} \neq 0,  \forall \, b \equiv ax (\text{mod t})$.  

Let $\nu$ be a rational rank 1 non discrete valuation of $K(X,Y)$ which dominates $R_{\mathfrak{m}}$. The algorithm of Theorem $4.2$ of [\ref{CV1}] (as refined in Section $(8)$ of [\ref{CV1}]) produces a generating sequence 
\begin{equation}\label{Gen Seq}
Q_0 = X, Q_1 = Y, Q_2, \cdots 	
\end{equation}
of elements in $R$ which have the following properties. 
\begin{enumerate}
	\item[1)] Let $\gamma_l = \nu (Q_l) \, \forall \, l \geq 0$ and $\overline{ m_{l}} = [G(\gamma_0 , \cdots , \gamma_l) : G(\gamma_0 , \cdots , \gamma_{l-1})]$ = min $\{ q \in \ZZ_{> 0 } \mid  q \gamma_l \in  G(\gamma_0 , \cdots , \gamma_{l-1}) \} \, \forall \, l \geq 1$. Then $\gamma_{l+1} > \overline{ m_{l}} \gamma_l \, \forall \, l \geq 1$. 
	\item[2)] Set $d(l) $ = deg$_Y (Q_l) \, \forall \, l \in \ZZ_{> 0 }$. Then, $Q_l = Y^{d(l)} + Q_l^{*} (X,Y)$, where deg$_Y (Q_l^{*} (X,Y)) < d(l)$. We have that, $d(1) = 1$, $d(l) = \prod_{k=1}^{l-1}\overline{ m_k}  \, \forall \, l \geq 2$. In particular, $1 \leq l_1 \leq l_2 \Longrightarrow d(l_1) \mid d(l_2)$. 
	\item[3)] Every $f \in R$ with deg$_Y (f) = d$ has a unique expression 
	\begin{equation*}\label{expression of f}
	f = \mathlarger{\sum_{m = 0}^d}\mathlarger{[(\sum_l b_{l,m}X^l) Q_1 ^{j_1 (m)} \cdots Q_r ^{j_r (m)}]}
	\end{equation*}
	where $b_{l,m} \in K$, $0 \leq j_l (m) < \overline{m_l} \, \forall \, l \geq 1$, and $\text{deg}_Y [Q_1 ^{j_1 (m)} \cdots Q_r ^{j_r (m)}] = m \, \forall \, m$. Writing $f_m = (\sum_l b_{l,m}X^l) Q_1 ^{j_1 (m)} \cdots Q_r ^{j_r (m)}$, we have that $\nu (f_m) = \nu(f_n) \Longleftrightarrow m = n$. So, $\nu (f) = \text{min}_m \{\nu(f_m) \}$.
	\item[4)] From $3)$ we have that the semigroup $S^{R_{\mathfrak{m}}} (\nu) = \{\nu(f) \mid 0 \neq f \in R \} = S(\gamma_l \mid l \geq 0) $.
	
\end{enumerate}

Suppose that $\nu$ is a rational rank $1$ non discrete valuation dominating $R_{\mathfrak{m}}$. We will say that $\nu$ has a generating sequence of eigenfunctions for $H_{i,j,t,x}$ if all $Q_l$ in the generating sequence (\ref{Gen Seq}) of Section \ref{GS} are eigenfunctions for $H_{i,j,t,x}$.

\section{Valuation Semigroups of Invariant Subrings}

\begin{Theorem}\label{invariant subring theorem}
	Let $i,j,t,x$ be positive integers satisfying the conditions of Theorem \ref{i,j,t,x theorem}. Suppose that $\nu$ is a rational rank $1$ non discrete valuation dominating $R_{\mathfrak{m}}$, where $R = K[X,Y]$, and $\mathfrak{m} = (X,Y)$. Suppose that $\nu$ has a generating sequence (\ref{Gen Seq})
	\begin{equation*}
	Q_0 = X, Q_1 = Y, Q_2, \cdots
	\end{equation*}
	such that each $Q_l \in R$ is an eigenfunction for $H_{i,j,t,x}$. Let notation be as in Section \ref{GS}. Then denoting $A_{i,j,t,x} = R^{H_{i,j,t,x}}$, and defining $ \mathfrak{n} = \mathfrak{m} \cap A_{i,j,t,x}$ we have  
	\begin{equation}\label{invariant subsemigroup formula}
	S^{(A_{i,j,t,x})_{\mathfrak{n}}} (\nu)= \left\{ l \gamma_0 + j_1 \gamma_1 + \cdots + j_r \gamma_r  \, \left | \, \begin{array}{@{}l@{\thinspace}l} 
	l \in \mathbb{N}, \,  r \in \NN, \, 0 \leq j_k < \overline{m_k} \, \forall \, k = 1, \cdots , r \\
	\alpha^{lai}\beta^{bj\mathlarger{\sum_{k=1}^r [j_{k} d(k)] }} = 1\\
	\forall \, b \equiv ax (\text{mod t})\\
	\end{array}
	\right.
	\right\}.
	\end{equation}
\end{Theorem}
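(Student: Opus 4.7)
The plan is to establish the two containments $\supseteq$ and $\subseteq$ separately, after two preliminary reductions. First, since any $h \in A_{i,j,t,x} \setminus \mathfrak{n}$ lies outside $\mathfrak{m}$ and is therefore a unit in $R_\mathfrak{m}$ with $\nu(h)=0$, we have $S^{(A_{i,j,t,x})_\mathfrak{n}}(\nu) = \{\nu(f) \mid 0 \neq f \in A_{i,j,t,x}\}$, so it suffices to work with honest polynomial invariants. Second, the eigenfunction hypothesis combined with $Q_k = Y^{d(k)} + Q_k^{*}$, where $\deg_Y(Q_k^{*}) < d(k)$, pins down the eigenvalue of $Q_k$ under $(\alpha^{ai},\beta^{bj}) \in H_{i,j,t,x}$: all monomials of $Q_k$ must share a common eigenvalue, which is then forced by the leading term $Y^{d(k)}$ to equal $\beta^{d(k) bj}$; likewise $(\alpha^{ai},\beta^{bj}) \cdot X = \alpha^{ai} X$.

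For the direction $\supseteq$, given a tuple $(l, j_1, \ldots, j_r)$ satisfying the conditions in (\ref{invariant subsemigroup formula}), I would set $g = X^l Q_1^{j_1} \cdots Q_r^{j_r}$. The eigenvalue computation yields $(\alpha^{ai},\beta^{bj}) \cdot g = \alpha^{lai}\beta^{bj \sum_{k=1}^{r} j_k d(k)}\, g$, and the hypothesis that this scalar equals $1$ for every $(a,b)$ with $b \equiv ax \pmod{t}$ is precisely the statement that $g \in A_{i,j,t,x}$. Then $\nu(g) = l\gamma_0 + \sum_{k=1}^{r} j_k \gamma_k$ belongs to $S^{(A_{i,j,t,x})_\mathfrak{n}}(\nu)$.

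For $\subseteq$, take $0 \neq f \in A_{i,j,t,x}$ and write it in the canonical form of property 3) of Section \ref{GS}:
\begin{equation*}
f = \sum_{(l,m)} b_{l,m}\, X^l\, Q_1^{j_1(m)} \cdots Q_r^{j_r(m)},
\end{equation*}
where $(j_1(m), \ldots, j_r(m))$ is the unique tuple with $0 \leq j_k(m) < \overline{m_k}$ and $\sum_k j_k(m) d(k) = m$. Applying $(\alpha^{ai},\beta^{bj})$ term by term multiplies each basis monomial by $\alpha^{lai}\beta^{bj \sum_k j_k(m) d(k)}$, and linear independence of these monomials (from the uniqueness part of property 3)) together with invariance of $f$ forces $\alpha^{lai}\beta^{bj \sum_k j_k(m) d(k)} = 1$ for every $(l,m)$ with $b_{l,m} \neq 0$ and every $(a,b)$ with $b \equiv ax \pmod{t}$. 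Property 3) also gives $\nu(f) = l_0 \gamma_0 + \sum_k j_k(m_0)\gamma_k$ for the minimum-achieving nonzero term, and the coefficient constraint just derived places this value in the right-hand side of (\ref{invariant subsemigroup formula}).

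The main obstacle is a bookkeeping one: one must carefully pass from invariance of $f$ (a single identity in $R$) to the invariance of each individual nonzero term in its canonical expansion, verifying that the relation holds \emph{simultaneously} for all $(a,b)$ with $b \equiv ax \pmod{t}$. This is handled cleanly by invoking the uniqueness in property 3), which furnishes the needed linear independence of the basis monomials $X^l Q_1^{j_1(m)} \cdots Q_r^{j_r(m)}$; beyond this, no further technical difficulty arises.
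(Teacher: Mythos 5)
Your proposal is correct and follows essentially the same route as the paper's proof: pinning down the eigenvalue of $Q_k$ via its leading monomial $Y^{d(k)}$, expanding $f$ in the canonical form of property 3) of Section \ref{GS}, using the uniqueness of that expansion to force each nonzero term to satisfy the scalar condition, and exhibiting $X^l Q_1^{j_1}\cdots Q_r^{j_r}$ as an invariant for the reverse containment. No meaningful deviations.
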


\begin{proof}
	Let $0 \neq f(X,Y) \in R$, with $\text{deg}_Y (f) = d$. By (\ref{K-algebra action}), $(\alpha^{ai}, \beta^{bj})\cdot Y^{d(m)} = \beta^{d(m)bj} Y^{d(m)}$. Since $Q_m$ is an eigenfunction of $H_{i,j,t,x}$, we conclude that for $m > 0$,
	\begin{equation}\label{Q_m as eigenfunction}
	(\alpha^{ai}, \beta^{bj})\cdot Q_m = \beta^{d(m)bj} Q_m = \beta^{\text{deg}_Y (Q_m) bj} Q_m\,, \, \forall \, (\alpha^{ai}, \beta^{bj}) \in H_{i,j,t,x}.
	\end{equation}
	We also have, $(\alpha^{ai}, \beta^{bj})\cdot Q_0 = (\alpha^{ai}, \beta^{bj})\cdot X = \alpha^{ai}X \,, \, \forall \, (\alpha^{ai}, \beta^{bj}) \in H_{i,j,t,x}$. Now $f$ has an expansion of the form $3)$ of Section \ref{GS}. So,
	\begin{align*}
	(\alpha^{ai}, \beta^{bj})\cdot f &= (\alpha^{ai}, \beta^{bj}) \cdot \mathlarger{\sum_{m = 0}^d}\mathlarger{[(\sum_l b_{l,m}X^l) Q_1 ^{j_1 (m)} \cdots Q_r ^{j_r (m)}]}\\
	&= \mathlarger{\sum_{m = 0}^d}\mathlarger{[(\sum_l \alpha^{lai} b_{l,m}X^l) \beta^{bj\mathlarger{\sum_{k=1}^r [j_{k} (m) d(k)] }} Q_1 ^{j_1 (m)} \cdots Q_r ^{j_r (m)}]}.\\
	\end{align*}	
	Now, $f \in A_{i,j,t,x} \Longleftrightarrow \alpha^{lai}\beta^{bj\mathlarger{\sum_{k=1}^r [j_{k} (m) d(k)]}} =1, \,  \forall \, b \equiv ax \text{( mod t)} , \, \forall \,l,$ such that $b_{l,m} \neq 0$. 
	\newline So,
	\begin{align*}
	\{ \nu(f) \, | \, 0 \neq f \in (A_{i,j,t,x})_{\mathfrak{n}} \} &= \{ \nu(f) \, | \, 0 \neq f \in A_{i,j,t,x} \}\\ 
	&\subset \left\{ l \gamma_0 + j_1 \gamma_1 + \cdots + j_r \gamma_r  \, \left | \, \begin{array}{@{}l@{\thinspace}l} 
	l \in \mathbb{N}, \, r \in \NN,  \, 0 \leq j_k < \overline{m_k} \, \forall \, k = 1, \cdots , r\\
	\alpha^{lai}\beta^{bj\mathlarger{\sum_{k=1}^r [j_{k} d(k)] }} = 1\\
	\forall \, b \equiv ax \text{( mod t )}\\
	\end{array}
	\right.
	\right\}.
	\end{align*}
	Conversely, suppose we have $l \in \mathbb{N}, \, r \in \NN, \,  0 \leq j_k < \overline{m_k} \, \forall \, k = 1, \cdots , r$ such that $ \forall \, b \equiv ax \text{(mod t)}$ we have $\alpha^{lai}\beta^{bj\mathlarger{\sum_{k=1}^r [j_{k} d(k)] }} = 1$. Define $f(X,Y) = X^l Q_{1} ^{j_1} \cdots Q_{r} ^{j_r} \in R $. For any $(\alpha^{ai}, \beta^{bj}) \in H_{i,j,t,x}$ we have, $(\alpha^{ai}, \beta^{bj}) \cdot f = (\alpha^{ai}, \beta^{bj}) \cdot (X^l Q_{1} ^{j_1} \cdots Q_{r} ^{j_r}) = \alpha^{lai}\beta^{bj\mathlarger{\sum_{k=1}^r [j_{k} d(k)] }} X^l Q_{1} ^{j_1} \cdots Q_{r} ^{j_r} = f$, that is, $f \in A_{i,j,t,x}$. So $\nu(f) = l \gamma_0 + j_1 \gamma_1 + \cdots + j_r \gamma_r \in S^{(A_{i,j,t,x})_{\mathfrak{n}}} (\nu)$. Hence we conclude,
	\begin{equation*}
	S^{(A_{i,j,t,x})_{\mathfrak{n}}} (\nu) = \left\{ l \gamma_0 + j_1 \gamma_1 + \cdots + j_r \gamma_r  \, \left | \, \begin{array}{@{}l@{\thinspace}l} 
	l \in \mathbb{N}, \, r \in \NN, \, 0 \leq j_k < \overline{m_k} \, \forall \, k = 1, \cdots , r\\
	\alpha^{lai}\beta^{bj\mathlarger{\sum_{k=1}^r [j_{k} d(k)] }} = 1\\
	\forall \, b \equiv ax \text{( mod t )}\\
	\end{array}
	\right.
	\right\}.
	\end{equation*}
\end{proof}

\section{Finite and Non-Finite Generation}	
In this section we study the finite and non-finite generation of the valuation semigroup $S^{R_{\mathfrak{m}}} (\nu)$ over the subsemigroup $S^{(A_{i,j,t,x})_{\mathfrak{n}}} (\nu)$. A semigroup $S$ is said to be finitely generated over a subsemigroup $T$ if there are finitely many elements $ s_1, \cdots , s_n $ in $S$ such that $S = \{ s_1, \cdots , s_n  \} + T$. 
\par At the end of this section we will prove the following theorem. 
\begin{Theorem}\label{Main Theorem} Let $R_{\mathfrak{m}} = K[X,Y]_{(X,Y)}$ and $H_{i,j,t,x}$ be a subgroup of $\mathbb{U}_m \times \mathbb{U}_n$.
	\begin{itemize}
		\item[1)] $\exists$ a rational rank $1$ non discrete valuation $\nu$ dominating $R_{\mathfrak{m}}$ with a generating sequence (\ref{Gen Seq}) of eigenfunctions for $H_{i,j,t,x} \Longleftrightarrow (\frac{m}{i}, \frac{n}{j}) = t$. 
		\item[2)] If $(\frac{m}{i}, \frac{n}{j}) = t = 1$, then $S^{R_{\mathfrak{m}}} (\nu)$ is a finitely generated $S^{(A_{i,j,t,x})_{\mathfrak{n}}} (\nu)$-module for all rational rank $1$ non discrete valuations $\nu$ which dominate $R_{\mathfrak{m}}$ and have a generating sequence (\ref{Gen Seq}) of eigenfunctions for $H_{i,j,t,x}$.
		\item[3)] If $(\frac{m}{i}, \frac{n}{j}) = t > 1$, then $S^{R_{\mathfrak{m}}} (\nu)$ is not a finitely generated $S^{(A_{i,j,t,x})_{\mathfrak{n}}} (\nu)$-module for all rational rank $1$ non discrete valuations $\nu$ which dominate $R_{\mathfrak{m}}$ and have a generating sequence (\ref{Gen Seq}) of eigenfunctions for $H_{i,j,t,x}$. 
	\end{itemize}
\end{Theorem}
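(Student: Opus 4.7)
The theorem splits cleanly into three parts, each of which I attack via the structure of the Cutkosky--Vinh generating sequence together with the preceding proposition, which identifies finite generation with the condition that all but finitely many $Q_r$ lie in $A_{i,j,t,x}$. A uniform reduction is useful throughout: because $Q_r$ is an eigenfunction with eigenvalue $\beta^{d(r)bj}$ (read off from the leading monomial $Y^{d(r)}$), and because $(x,t)=1$ forces the component $b$ to range over all of $\ZZ/(n/j)$ as $(a,b)$ varies in $H_{i,j,t,x}$, we have $Q_r \in A_{i,j,t,x}$ if and only if $\frac{n}{j}\mid d(r)$.

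For part 1, I zoom in on the first non-trivial generator $Q_2 = Y^{\overline{m_1}} - cX^{l_0}$, where $c\in K^{*}$ and $\gcd(\overline{m_1},l_0)=1$ by the minimality defining $\overline{m_1}$. Forcing $Q_2$ to be an eigenfunction means $X^{l_0}$ must carry the same character as $Y^{\overline{m_1}}$; specializing this equality to $(a,b)=(t,0)$ and to $(0,t)$ gives $\frac{m}{it}\mid l_0$ and $\frac{n}{jt}\mid\overline{m_1}$, and combined with $\gcd(\overline{m_1},l_0)=1$ this forces $\gcd(\frac{m}{it},\frac{n}{jt})=1$, i.e.\ $(\frac{m}{i},\frac{n}{j})=t$. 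Conversely, assuming $(\frac{m}{i},\frac{n}{j})=t$, I construct a suitable valuation by picking coprime positive integers $n_1,l_1$ with $l_1 \equiv n_1 x \pmod t$ and with $\gcd(n_1,\frac{m}{it})=\gcd(l_1,\frac{n}{jt})=1$, setting $\overline{m_1}=n_1\frac{n}{jt}$, $l_0=l_1\frac{m}{it}$, and choosing $\gamma_0,\gamma_1$ with $\overline{m_1}\gamma_1=l_0\gamma_0$. I then iterate, selecting each $\gamma_k$ so that the forced recursion $\overline{m_k}\gamma_k=a_0\gamma_0+\sum a_l\gamma_l$ has coefficients satisfying $\frac{m}{it}\mid a_0$, $\frac{n}{jt}\mid a_1$, and the appropriate mod-$t$ congruence, ensuring all eigenfunction constraints.

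Part 2 is now immediate: when $t=1$ the eigenfunction analysis of $Q_2$ already yields $\frac{n}{j}=\frac{n}{jt}\mid\overline{m_1}$, so $\frac{n}{j}\mid d(r)$ for every $r\ge 2$ and finite generation follows. For part 3 ($t>1$), finite generation would force $t\mid d(r)/\frac{n}{jt}=n_1\prod_{k\ge 2}\overline{m_k}$ for some $r$. The mod-$t$ condition $l_1 \equiv n_1 x\pmod t$ together with $\gcd(n_1,l_1)=1$ and $(x,t)=1$ gives $\gcd(n_1,t)=1$ by a short argument (any common prime divisor of $n_1$ and $t$ would also divide $l_1$). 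The main work is then the claim that $\gcd(\overline{m_k},t)=1$ for every $k\ge 2$, which I prove by induction on $k$: assuming it for indices less than $k$ and supposing for contradiction that a prime $p$ divides both $\overline{m_k}$ and $t$, I write $\overline{m_k}\gamma_k=N/d(k)\,\gamma_0$ inside $G(\gamma_0,\ldots,\gamma_{k-1})=(\gamma_0/d(k))\ZZ$, so that the minimality of $\overline{m_k}$ is equivalent to $p\nmid N$. Expanding $N$ via the unique representation $\overline{m_k}\gamma_k=a_0\gamma_0+\sum a_l\gamma_l$, whose coefficients satisfy the eigenfunction constraints $a_0=c_0\frac{m}{it}$, $a_1=b_1\frac{n}{jt}$ together with the mod-$t$ congruence coupling $c_0,b_1$ to $\overline{m_k}$, and reducing modulo $p$ using $p\mid\overline{m_k}$ and the inductive hypothesis that each $\overline{m_l}$ for $l<k$ is a unit mod $p$, the congruence collapses to force $p\mid N$, a contradiction. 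Granted the claim, $n_1\prod_{k\ge 2}\overline{m_k}$ is coprime to $t$, so $t>1$ does not divide $d(r)/\frac{n}{jt}$ for any $r$, and $S^{R_\mathfrak{m}}(\nu)$ is not finitely generated over $S^{(A_{i,j,t,x})_\mathfrak{n}}(\nu)$.

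The main obstacle is this inductive proof of $\gcd(\overline{m_k},t)=1$ for $k\ge 2$ in part 3: it demands a delicate reduction modulo $p$ of the mod-$t$ eigenfunction constraint on the exponent data of the subtracted term $X^{a_0}Q_1^{a_1}\cdots Q_{k-1}^{a_{k-1}}$ inside $Q_{k+1}$, combined with the inductive control on lower $\overline{m_l}$'s, in order to produce the divisibility $p\mid N$ that contradicts the minimality of $\overline{m_k}$.
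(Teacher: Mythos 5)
Your high-level outline mirrors the paper's strategy (Propositions \ref{gcd > t}, \ref{f.g iff all q_i after some i}, Lemma \ref{not f.g condition}, and Propositions \ref{p nmid N}, \ref{p divides N}), and several pieces are sound. The criterion ``$Q_r\in A_{i,j,t,x}\Leftrightarrow \tfrac{n}{j}\mid d(r)$'' is a correct reformulation of Lemma~\ref{not f.g condition}. Your proof of the forward implication in part 1 via $(\alpha^{ti},1)$ and $(1,\beta^{tj})$ acting on $Q_2$ is essentially the paper's Proposition~\ref{gcd > t}; part 2 is the same as the paper's. You also make a genuinely useful observation the paper does not exploit: since $(1,\beta^{tj})\in H_{i,j,t,x}$ unconditionally, $Q_2$ being an eigenfunction forces the full divisibility $\tfrac{n}{jt}\mid\overline{m_1}$ (and similarly $\tfrac{n}{jt}\mid c_1$ at later stages), not merely $\overline N\mid\overline{m_1}$ where $\overline N$ is the largest divisor of $N=\tfrac{n}{jt}$ coprime to $x$, which is all the paper extracts from $(\alpha^i,\beta^{xj})$ in Proposition~\ref{p divides N}. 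If fully developed, this does simplify the setup.

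However, the central claim driving your part 3 --- that $\gcd(\overline{m_k},t)=1$ for every $k\ge 2$, proved by induction --- has a real gap, and it is precisely at the point you flag as ``the main obstacle.'' Your induction hypothesis is only that each $\overline{m_l}$ with $l<k$ is a unit mod $p$. Writing $\overline{m_k}\gamma_k=\mathcal N\gamma_0/d(k)$, the minimality of $\overline{m_k}$ gives $(\overline{m_k},\mathcal N)=1$, so $p\mid\overline{m_k}\Rightarrow p\nmid\mathcal N$. Expanding $\mathcal N$ through the representation $\overline{m_k}\gamma_k=c_0\gamma_0+\sum c_l\gamma_l$, you must then show $p\mid\mathcal N$. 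But the numerators of the lower $\gamma_l$ (the integers $\mathcal N_l$ with $\gamma_l=\mathcal N_l\gamma_0/d(l+1)$) enter $\mathcal N$ nontrivially, and reducing mod $p$ does not ``collapse'' to $p\mid\mathcal N$ unless one has prior control on $\mathcal N_l$ mod $p$. This is exactly why the paper's induction in Proposition~\ref{p nmid N} carries the extra statement $a_k\equiv b_kMN_1xw_2\overline{w_1}d(k)\pmod p$, and why Proposition~\ref{p divides N} carries the explicit formula $\gamma_k=Mu\,\overline{m_2}\cdots\overline{m_{k-1}}+M\overline{N}t\lambda_k/(\overline{m_1}\cdots\overline{m_k})$. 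Without strengthening your induction hypothesis to include such a congruence on $\gamma_l$, the modular reduction simply does not close up. Relatedly, the paper found it necessary (or at least far cleaner) to split into the two cases of Propositions~\ref{p nmid N} and~\ref{p divides N} depending on whether some prime of $t$ fails to divide $N$; you assert a single uniform argument but do not demonstrate it. (Separately, the part 1 converse needs a concrete iterative construction, and the paper's proof via Dirichlet's theorem on primes in arithmetic progressions shows this is not automatic; your one-sentence ``I then iterate'' leaves the crucial verification that the constructed $\gamma_k$ satisfy the eigenfunction constraints to the reader.) The fix is to enlarge the induction hypothesis in part 3 to track $\gamma_l\pmod p$, at which point you may well recover a cleaner version of the paper's argument using your stronger divisibility $N\mid c_1$.
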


\par  We introduce some notation. Let $\sigma(0) = 0, \, \sigma(l) = \text{min } \{ j \, | \, j > \sigma(l-1) \text{ and } \overline{ m_j} > 1  \}$. Let $P_l = Q_{\sigma(l)}$ and $\beta_l = \nu(P_l) = \gamma_{\sigma(l)} \, \forall \, l \geq 0$. Let $\overline{ n_l}  = [G(\beta_0, \cdots , \beta_l) : G(\beta_0, \cdots , \beta_{l-1} )] = \text{min} \{ q \in \mathbb{Z}_{> 0} \, | \, q \beta_l \in G(\beta_0, \cdots , \beta_{l-1} ) \} \, \forall \, l \geq 1$. Then $\overline{ n_l} = \overline{ m_{\sigma(l)}}$. $S^{R_{\mathfrak{m}}} (\nu) = S(\gamma_0 , \gamma_1 , \cdots ) = S(\beta_0 , \beta_1 , \cdots )$ and $\{ \beta_l \}_{l \geq 0}$ form a minimal generating set of $S^{R_{\mathfrak{m}}} (\nu)$, that is, $\overline{ n_l} > 1 \, \forall \, l \geq 1$. 
\par 
\vspace{0.2 mm} We first make a general observation. Suppose for some $d \geq  1$, $j_r \neq 0$ and $l , j_1 , \cdots , j_r \in \NN$, we have an expression of the form, $\beta_d = l \beta_0 + j_1 \beta_1 + \cdots + j_r \beta_r$. If $r > d$ then $j_r \beta_r \geq \beta_r > \beta_d$ which is a contradiction. If $r < d$ then $\beta_d \in G(\beta_0, \cdots , \beta_{d-1}) \Longrightarrow \overline{ n_d} = 1$. This is a contradiction as $\overline{ n_l} > 1 \, \forall \, l \geq 1$. Thus, $\beta_r = l \beta_0 + j_1 \beta_1 + \cdots + j_r \beta_r$. If $j_r > 1$, then $j_r \beta_r > \beta_r$. If $j_r = 0$, then $\beta_r \in G(\beta_0 , \cdots , \beta_{r-1}) \Longrightarrow \overline{n_r} = 1$. So, $j_r = 1$. Since $\beta_i > 0 \, \forall \, i$, we then have $l = 0, j_i = 0 \, \forall \, i \neq r$. Thus, for $l, j_1, \cdots , j_r \in \NN$ and $d \geq 1$, 
\begin{equation}\label{min gen relation}
\beta_d = l \beta_0 + j_1 \beta_1 + \cdots + j_r \beta_r \Longrightarrow j_d = 1, l = 0, j_i = 0 \, \forall \, i \neq d. 
\end{equation}

\begin{Proposition}\label{f.g iff all q_i after some i}
	Let $R_{\mathfrak{m}} = K[X,Y]_{(X,Y)}$ and $H_{i,j,t,x}$ be a subgroup of $\mathbb{U}_m \times \mathbb{U}_n$. Let assumptions be as in Theorem \ref{invariant subring theorem}. Then $S^{R_{\mathfrak{m}}} (\nu)$ is finitely generated over the subsemigroup $S^{(A_{i,j,t,x})_{\mathfrak{n}}} (\nu)$ if and only if $\exists N \in \mathbb{Z}_{> 0} $ such that $ Q_r \in A_{i,j,t,x} \, \forall \, r \geq N$. Further, if $Q_N \in A_{i,j,t,x}$, then $Q_M \in A_{i,j,t,x} \, \forall \, M \geq N \geq 1$. 
\end{Proposition}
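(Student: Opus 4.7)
The plan begins with the elementary characterization: for $l \geq 1$, $Q_l \in A_{i,j,t,x}$ if and only if $(n/j) \mid d(l)$. Since $Q_l$ is an eigenfunction and $(\alpha^{ai}, \beta^{bj}) \in H_{i,j,t,x}$ scales it by $\beta^{d(l)bj}$, invariance amounts to $\beta^{d(l)bj} = 1$ for every admissible $(a,b)$. Using $(x,t) = 1$ together with $t \mid m/i$, as $(a,b)$ ranges over $H_{i,j,t,x}$ the residues of $b$ modulo $n/j$ cover all of $\ZZ/(n/j)\ZZ$, so the invariance condition collapses to $(n/j) \mid d(l)$. The ``further'' statement is then immediate from property $2)$ of Section \ref{GS}: $d(N) \mid d(M)$ whenever $M \geq N \geq 1$, so divisibility by $n/j$ is inherited.

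For the ($\Longleftarrow$) direction, I would use the canonical expansion of property $3)$: every $v \in S^{R_{\mathfrak{m}}}(\nu)$ has the form $v = l\gamma_0 + j_1\gamma_1 + \cdots + j_r\gamma_r$ with $l \in \NN$ and $0 \leq j_k < \overline{m_k}$. The monomial $X^{m/i}$ is $H_{i,j,t,x}$-invariant (since $\alpha^{(m/i)ai}=\alpha^{ma}=1$), so $(m/i)\gamma_0 \in S^{(A_{i,j,t,x})_{\mathfrak{n}}}(\nu)$, and by hypothesis $\gamma_k \in S^{(A_{i,j,t,x})_{\mathfrak{n}}}(\nu)$ for every $k \geq N$. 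Splitting the sum at index $N$ and reducing $l$ modulo $m/i$ writes $v$ as an element of the finite set
\[
G = \{l_0\gamma_0 + j_1\gamma_1 + \cdots + j_{N-1}\gamma_{N-1}\ :\ 0 \leq l_0 < m/i,\ 0 \leq j_k < \overline{m_k}\} \subset S^{R_{\mathfrak{m}}}(\nu)
\]
plus a term in $S^{(A_{i,j,t,x})_{\mathfrak{n}}}(\nu)$, yielding $S^{R_{\mathfrak{m}}}(\nu) = G + S^{(A_{i,j,t,x})_{\mathfrak{n}}}(\nu)$.

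For ($\Longrightarrow$), suppose $S^{R_{\mathfrak{m}}}(\nu) = \{s_1, \ldots, s_n\} + S^{(A_{i,j,t,x})_{\mathfrak{n}}}(\nu)$. Property $1)$ yields $\beta_{l+1} > \overline{n_l}\beta_l \geq 2\beta_l$, so $\beta_l \to \infty$; for all large $l$ we have $\beta_l > \max_j s_j$ and can write $\beta_l = s_j + t$ with $t \in S^{(A_{i,j,t,x})_{\mathfrak{n}}}(\nu)$. Writing $s_j$ and $t$ as nonnegative $\NN$-combinations of the minimal generators $\{\beta_k\}$ and summing, (\ref{min gen relation}) forces the unique such expression of $\beta_l$ to have coefficient $1$ at position $l$ and $0$ elsewhere; hence one of $s_j,t$ vanishes and the other equals $\beta_l$. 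Since $s_j < \beta_l$, we get $s_j = 0$ and $\beta_l = t \in S^{(A_{i,j,t,x})_{\mathfrak{n}}}(\nu)$ for every sufficiently large $l$.

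The main obstacle is the final step: deducing $P_l = Q_{\sigma(l)} \in A_{i,j,t,x}$ from $\beta_l \in S^{(A_{i,j,t,x})_{\mathfrak{n}}}(\nu)$. Pick $f \in A_{i,j,t,x}$ with $\nu(f) = \beta_l$ and expand it in the canonical form of property $3)$. Each monomial summand $X^{l'}Q_1^{j_1(m)}\cdots Q_r^{j_r(m)}$ is an eigenfunction with an explicit eigenvalue, and the uniqueness of the canonical expansion forces $g \cdot f = f$ to decouple coefficient-by-coefficient, so every summand with nonzero coefficient is itself $H_{i,j,t,x}$-invariant. Selecting a summand attaining the minimum valuation $\beta_l$, its value equation $l_0\gamma_0 + \sum_{k \geq 1} j_k(m)\gamma_k = \beta_l$ becomes, once the forced-zero indices with $\overline{m_k} = 1$ are dropped, a nonnegative $\NN$-expression of $\beta_l$ in the minimal generators; (\ref{min gen relation}) pins it down to $l_0 = 0$, $j_{\sigma(l)}(m) = 1$, and all remaining $j_k(m) = 0$, so the summand is a nonzero scalar multiple of $P_l$. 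Therefore $P_l \in A_{i,j,t,x}$, and combined with the ``further'' statement this gives $Q_r \in A_{i,j,t,x}$ for all $r \geq \sigma(l)$, completing the ($\Longrightarrow$) direction.
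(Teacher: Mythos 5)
Your proof is correct and follows the same overall architecture as the paper's: it rests on the canonical expansion of property 3), the minimal-generator relation (\ref{min gen relation}), and the eigenfunction identity (\ref{Q_m as eigenfunction}), deployed in essentially the same way. Two variations are worth remarking on. For the forward direction, the paper first establishes the full equivalence $\gamma_r \in S^{(A_{i,j,t,x})_{\mathfrak{n}}}(\nu) \Leftrightarrow Q_r \in A_{i,j,t,x}$ for every $r \geq 1$, whose $r > s$ case needs a separate argument via Equation $(8)$ of [\ref{CV1}] when $\overline{m_r} = 1$; you sidestep this by showing only that $\beta_l \in S^{(A_{i,j,t,x})_{\mathfrak{n}}}(\nu)$ forces $P_l = Q_{\sigma(l)} \in A_{i,j,t,x}$ (where $\overline{m_{\sigma(l)}} > 1$ by construction) and then letting the already-proved ``further'' statement propagate membership to all $r \geq \sigma(l)$ — a modest streamlining that avoids the $\overline{m_r} = 1$ case entirely. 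For the converse, the paper bounds the $\gamma_0$-coefficient using rationality, writing $d_i\gamma_i = d_{i,N}\gamma_N$ and reducing modulo $d_i$, whereas you bound it by observing that $X^{m/i}$ is $H_{i,j,t,x}$-invariant and reducing modulo $m/i$; both produce a finite transversal, and yours reads more naturally in the group-action setting. Your explicit characterization $Q_l \in A_{i,j,t,x} \Leftrightarrow (n/j) \mid d(l)$ for $l \geq 1$ is correct (the residues of $b$ modulo $n/j$ over $H_{i,j,t,x}$ are indeed unrestricted because $(x,t)=1$ and $t \mid m/i$), and it is exactly the criterion underlying Lemma \ref{not f.g condition}, derived here independently.
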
	

\begin{proof}
	We first show that, for any $r \geq 1$, $\gamma_r \in S^{(A_{i,j,t,x})_{\mathfrak{n}}} (\nu) \Longleftrightarrow Q_r \in A_{i,j,t,x}$. It is enough to show the implication $\gamma_r \in S^{(A_{i,j,t,x})_{\mathfrak{n}}} (\nu) \Longrightarrow Q_r \in A_{i,j,t,x}$. From (\ref{invariant subsemigroup formula}) we have, $\gamma_r \in S^{(A_{i,j,t,x})_{\mathfrak{n}}} (\nu) \Longrightarrow \gamma_r = l \gamma_0 + j_1 \gamma_1 + \cdots + j_s \gamma_s$, where $l \in \NN$, $s \in \NN$,  $0 \leq j_k < \overline{ m_k}$ and $\alpha^{lai} \beta^{bj \sum_{ k = 1}^{s} j_k d(k)} = 1 \, \forall \, b \equiv  ax (\text{mod }t) $.
	\newline Since $l, j_1, \cdots , j_s \in \NN$, $\gamma_i < \gamma_{i+1} \, \forall \, i \geq 1$ and $\gamma_i > 0 \, \forall \, i$, we have $r \geq s$. If $r = s$, then $\gamma_r = l \gamma_0 + \sum_{k = 1}^{r} j_k \gamma_k \geq j_r \gamma_r \geq \gamma_r$. Since $j_r \neq 0$ and $j_r \in \NN$ we have $j_r = 1$. And $\gamma_i > 0 \, \forall \, i$ implies $l = j_1 = \cdots = j_{r-1} = 0$. Then $\beta^{bj d(r)} = 1 \, \forall \, b \equiv  ax (\text{mod }t) $. So from (\ref{Q_m as eigenfunction}), $(\alpha^{ai}, \beta^{bj}) \cdot Q_r = Q_r \, \forall \, b \equiv  ax (\text{mod }t)$, that is, $Q_r \in A_{i,j,t,x}$.
	\par  If $r > s$, then $\gamma_r = l \gamma_0 + \sum_{k = 1}^{s} j_k \gamma_k \Longrightarrow \overline{ m_r} = 1$. Since $0 \leq j_k < \overline{ m_k}$, by Equation $(8)$ in [\ref{CV1}] we have $Q_{r+1} = Q_r - \lambda X^{l} Y^{j_1} Q_2^{j_2} \cdots Q_{s}^{j_{s}}$ where $\lambda \in K \setminus \{ 0 \}$. Since each $Q_m$ is an eigenfunction for $H_{i,j,t,x}$, from (\ref{Q_m as eigenfunction}) we have, $\forall \, b \equiv  ax (\text{mod }t)$, 
	\begin{equation*}
	\beta^{bj d(r+1)} Q_{r+1} = \beta^{bj d(r)} Q_r - \lambda \alpha^{lai} \beta^{bj \sum_{ k = 1}^{s} j_k d(k)} X^{l} Y^{j_1} Q_2^{j_2} \cdots Q_{s}^{j_{s}} .
	\end{equation*}
	Again by $2)$ in Section \ref{GS} we have $d(r+1) = \overline{ m_1} \cdots  \overline{ m_r} = \overline{ m_1} \cdots \overline{ m_{r-1}} = d(r)$, as $\overline{ m_r} = 1$. So the above expression yields $\beta^{bj d(r)} Q_{r+1} = \beta^{bj d(r)} Q_r - \lambda \alpha^{lai} \beta^{bj \sum_{ k = 1}^{s} j_k d(k)} X^{l} Y^{j_1} Q_2^{j_2} \cdots Q_{s}^{j_{s}} \, \forall \, b \equiv  ax (\text{mod }t)$. Since $Q_{r+1}$ is an eigenfunction, this implies $\beta^{bj d(r)} = \alpha^{lai} \beta^{bj \sum_{ k = 1}^{s} j_k d(k)} = 1 \, \forall \, b \equiv  ax (\text{mod }t) $. From (\ref{Q_m as eigenfunction}), we then have $Q_r \in A_{i,j,t,x}$.
	\par 
	\vspace{2 mm}
	To prove the proposition, we now show $S^{R_{\mathfrak{m}}} (\nu) $ is finitely generated over the subsemigroup $S^{(A_{i,j,t,x})_{\mathfrak{n}}} (\nu)$ if and only if $\exists N \in \ZZ_{> 0}$ such that $\forall \, r \geq N, \, \gamma_r \in S^{(A_{i,j,t,x})_{\mathfrak{n}}} (\nu)$. 
	\newline Suppose $S^{R_{\mathfrak{m}}} (\nu)$ is finitely generated over $S^{(A_{i,j,t,x})_{\mathfrak{n}}} (\nu)$. So, $\exists \, x_0, \cdots , x_l \in S^{R_{\mathfrak{m}}} (\nu)$ such that $S^{R_{\mathfrak{m}}} (\nu) = \{ x_0, \cdots , x_l \} + S^{(A_{i,j,t,x})_{\mathfrak{n}}} (\nu)$. Let $L \in \NN$ be the least natural number such that $ S^{R_{\mathfrak{m}}} (\nu) = S(\beta_0, \cdots , \beta_L) + S^{(A_{i,j,t,x})_{\mathfrak{n}}} (\nu)$, where $\beta_i = \gamma_{\sigma(i)} \, \forall \, i \geq 0$. Suppose, if possible, $\exists \, r > \sigma(L) \geq 0$ such that $\gamma_r \notin S^{(A_{i,j,t,x})_{\mathfrak{n}}} (\nu)$. Choose $M$ such that $\sigma(M) \leq r < \sigma(M+1)$. Then $\sigma(L) < \sigma(M)$, that is $L < M$. So $\beta_L < \beta_M \leq \gamma_r < \beta_{M+1}$. Now $\beta_M$ has an expression $\beta_M = \sum_{i = 0}^L a_i \beta_i + y$ where $y \in S^{(A_{i,j,t,x})_{\mathfrak{n}}} (\nu), \, a_i \in \mathbb{N}$. From (\ref{invariant subsemigroup formula}) we have $\beta_M = \sum_{i = 0}^L a_i \beta_i + (l \gamma_0 + j_1 \gamma_1 + \cdots + j_s \gamma_s)$, where $0 \leq j_k < \overline{ m_k}$ and $\alpha^{lai} \beta^{bj \sum_{k = 1}^{s} j_k d(k)} = 1 \, \forall \, b \equiv ax (\text{mod }t)$. We observe $\overline{ m_k} = 1 \Longrightarrow j_k = 0$. Thus the above expression can be rewritten as, 
	\begin{equation*}
	\beta_M = \sum_{i = 0}^L a_i \beta_i + (l \beta_0 + j_1 \beta_1 + \cdots + j_p \beta_p) 
	\end{equation*}
	where $0 \leq j_k < \overline{ n_k}$ and $\alpha^{lai} \beta^{bj \sum_{k = 1}^{p} j_k \text{deg}_Y (P_k)} = 1 \, \forall \, b \equiv ax (\text{mod }t)$.
	Since $L < M$, from (\ref{min gen relation}) we obtain $j_M = 1, a_i = 0 \, \forall \, i = 0 , \cdots , L$ and $j_k = 0 \, \forall \, k \neq M$. Thus $\beta^{bj \text{deg}_Y (P_M)} = 1 \, \forall \, b \equiv ax (\text{mod }t)$. From $2)$ in Section \ref{GS} we have $d(r) = \overline{ m_1} \cdots \overline{ m_{r-1}}$. And, deg$_Y (P_M) = d(\sigma(M)) = \overline{ m_1} \cdots \overline{ m_{\sigma(M)-1}}$. Since $r \geq \sigma(M) \Longrightarrow r-1 \geq \sigma(M)-1$, we thus have deg$_Y (P_M) \mid d(r)$. So, $\beta^{bj d(r)} = 1 \, \forall \, b \equiv ax (\text{mod }t)$. From (\ref{Q_m as eigenfunction}) we then conclude, $Q_r \in A_{i,j,t,x}$. But this contradicts $\gamma_r \notin S^{(A_{i,j,t,x})_{\mathfrak{n}}} (\nu)$. So, $Q_r \in A_{i,j,t,x} \, \forall \, r > \sigma(L) \geq 0$, that is, $Q_r \in A_{i,j,t,x} \, \forall \, r \geq N$ for some $N \in \ZZ_{> 0 }$.
	\par 
	\vspace{ 1 mm} Conversely, we assume $S(\gamma_N, \gamma_{N+1}, \cdots) \, \subset S^{(A_{i,j,t,x})_{\mathfrak{n}}} (\nu)$ for some $N \in \ZZ_{> 0}$. Now $\gamma_i \in \mathbb{Q}_{> 0} \, \forall \, i$ implies $\forall \, i \neq j,  \, \exists \, d_i, d_j \in \mathbb{Z}_{> 0}$ such that $d_i \gamma_i = d_j \gamma_j$. We thus have $d_i \gamma_i = d_{i,N} \gamma_N \, \forall \, i = 0 , \cdots , N-1$. We will now show that, $S^{R_{\mathfrak{m}}} (\nu) = T + S^{(A_{i,j,t,x})_{\mathfrak{n}}} (\nu)$, where $T = \{ \sum_{i=0}^{N-1} \overline{a_i} \gamma_i \mid 0 \leq \overline{a_i} < d_i \}$. Now, $\gamma_i \in S^{R_{\mathfrak{m}}} (\nu) \, \forall \, i = 0, \cdots , N-1 \Longrightarrow T + S^{(A_{i,j,t,x})_{\mathfrak{n}}} (\nu) \subset S^{R_{\mathfrak{m}}} (\nu)$. So it is enough to show $S^{R_{\mathfrak{m}}} (\nu) \subset T + S^{(A_{i,j,t,x})_{\mathfrak{n}}} (\nu)$.
	\begin{align*}
	x \in S^{R_{\mathfrak{m}}} (\nu) &\Longrightarrow x = \sum_{i = 0} ^{N-1} a_i \gamma_i + \sum_{i = N}^l a_i \gamma_i \\
	&\Longrightarrow x = \sum_{i = 0} ^{N-1} \overline{a_i} \gamma_i + \sum_{i = 0}^{N-1} b_i d_i \gamma_i + \sum_{i = N}^l a_i \gamma_i \text{   where } a_i = \overline{a_i} + b_i d_i, \, 0 \leq \overline{a_i} < d_i, \, b_i \in \NN  \\
	&\Longrightarrow x = \sum_{i = 0} ^{N-1} \overline{a_i} \gamma_i + \sum_{i = 0}^{N-1} b_i d_{i,N} \gamma_N + \sum_{i = N}^l a_i \gamma_i \\
	&\Longrightarrow x = \sum_{i = 0} ^{N-1} \overline{a_i} \gamma_i + y , \, \text{ where } y \in S^{(A_{i,j,t,x})_{\mathfrak{n}}} (\nu). \\
	\end{align*}
	Thus we have shown $S^{R_{\mathfrak{m}}} (\nu) \subset T + S^{(A_{i,j,t,x})_{\mathfrak{n}}} (\nu)$. Since $T$ is a finite set, we have $S^{R_{\mathfrak{m}}} (\nu)$ is finitely generated over $S^{(A_{i,j,t,x})_{\mathfrak{n}}} (\nu)$. 
	
	\par  From (\ref{Q_m as eigenfunction}), $(\alpha^{ai}, \beta^{bj}) \cdot Q_N = \beta^{d(N) bj} Q_N \, \forall \, b \equiv ax (\text{mod }t)$. So, $Q_N \in A_{i,j,t,x} \Longleftrightarrow \beta^{d(N) bj} = 1 \, \forall \, b \equiv ax (\text{mod }t)$. Again from $2)$ of Section \ref{GS} we have $d(N) \mid d(M) \, \forall \, M \geq N \geq 1$. Hence we obtain, $Q_N \in A_{i,j,t,x} \Longrightarrow Q_M \in A_{i,j,t,x} \, \forall \, M \geq N \geq 1$. So, $S^{R_{\mathfrak{m}}} (\nu)$ is not finitely generated over $S^{(A_{i,j,t,x})_{\mathfrak{n}}} (\nu)$ if and only if $Q_r \notin A_{i,j,t,x} \, \forall \, r \geq 1$.
	
\end{proof}

\begin{Lemma}\label{not f.g condition}
	Let $i,j,t,x$ be positive integers satisfying the conditions of Theorem \ref{i,j,t,x theorem}. Let assumptions be as in Theorem \ref{invariant subring theorem}. Then $S^{R_{\mathfrak{m}}} (\nu)$ is not finitely generated over $S^{(A_{i,j,t,x})_{\mathfrak{n}}} (\nu)$ if and only if $j \neq n$ and $ \frac{n}{j} \nmid d(l) \, \forall \, l \geq 2$.
\end{Lemma}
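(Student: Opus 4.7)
The starting point is Proposition \ref{f.g iff all q_i after some i}, whose last sentence says that $S^{R_{\mathfrak{m}}}(\nu)$ fails to be finitely generated over $S^{(A_{i,j,t,x})_{\mathfrak{n}}}(\nu)$ precisely when $Q_r \notin A_{i,j,t,x}$ for every $r \geq 1$. So the plan is to convert, for each $r \geq 1$, the membership statement $Q_r \in A_{i,j,t,x}$ into a clean numerical divisibility condition on $d(r)$, and then read off the claim.

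The input for this conversion is the eigenfunction formula (\ref{Q_m as eigenfunction}): for every $(\alpha^{ai},\beta^{bj}) \in H_{i,j,t,x}$ we have $(\alpha^{ai},\beta^{bj})\cdot Q_r = \beta^{d(r)bj}Q_r$. Hence $Q_r \in A_{i,j,t,x}$ if and only if $\beta^{d(r)bj}=1$ for every $b$ that appears as the second index of some element of $H_{i,j,t,x}$. Using that $\beta$ is a primitive $n$-th root of unity, this is equivalent to $n \mid d(r) b j$ for all admissible $b$, i.e., $\tfrac{n}{j} \mid d(r) b$ for all admissible $b$ (here using $j\mid n$).

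The next step is to notice that the admissible $b$ sweep out all of $\mathbb Z$. Indeed, the constraint is $b\equiv ax\pmod t$; since $(x,t)=1$ by Theorem \ref{i,j,t,x theorem} and $t\mid m/i$, as $a$ ranges over the integers the residues $ax \pmod t$ cover every class mod $t$, and given any target $b_0 \in \mathbb Z$ we can pick $a_0$ with $b_0\equiv a_0 x\pmod t$, so $(\alpha^{a_0 i},\beta^{b_0 j})\in H_{i,j,t,x}$. Therefore the condition $\tfrac{n}{j}\mid d(r)b$ for all admissible $b$ forces, by taking $b=1$, the sharper condition $\tfrac{n}{j}\mid d(r)$; conversely this divisibility obviously implies the statement for all $b$. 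We conclude
\[
Q_r \in A_{i,j,t,x} \iff \tfrac{n}{j} \mid d(r), \qquad r\geq 1.
\]

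Finally I apply this case by case. For $r=1$ we have $d(1)=1$ by $2)$ of Section \ref{GS}, so $Q_1 \in A_{i,j,t,x}$ iff $\tfrac{n}{j}=1$ iff $j=n$; equivalently $Q_1\notin A_{i,j,t,x}$ iff $j\neq n$. For $r\geq 2$ we have $d(r)=\prod_{k=1}^{r-1}\overline{m_k}$, and $Q_r \notin A_{i,j,t,x}$ iff $\tfrac{n}{j}\nmid d(r)$. Combining with the reduction from Proposition \ref{f.g iff all q_i after some i}, non-finite generation is equivalent to $j\neq n$ together with $\tfrac{n}{j}\nmid d(l)$ for every $l\geq 2$, as claimed. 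There is no serious obstacle here; the only point requiring care is verifying that the relation $b\equiv ax\pmod t$ actually lets $b$ take every integer value, which is precisely what $(x,t)=1$ provides.
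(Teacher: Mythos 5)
Your proof is correct and follows essentially the same route as the paper: both reduce to Proposition \ref{f.g iff all q_i after some i}, then use (\ref{Q_m as eigenfunction}) and the observation that $(x,t)=1$ makes $b=1$ admissible (i.e., $(\alpha^{a_0 i},\beta^{j})\in H_{i,j,t,x}$ for suitable $a_0$) to conclude $Q_r\in A_{i,j,t,x}$ iff $\frac{n}{j}\mid d(r)$. Your version merely packages this equivalence cleanly up front and handles $r=1$ via $d(1)=1$ rather than as a separate $j=n$ case, but the mathematical content is identical to the paper's argument.
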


\begin{proof}
	Suppose that $S^{R_{\mathfrak{m}}} (\nu)$ is not finitely generated over $S^{(A_{i,j,t,x})_{\mathfrak{n}}} (\nu)$. Then $Q_l \notin A_{i,j,t,x} \, \forall \, l \geq 1$. From (\ref{Q_m as eigenfunction}), if $j = n$, then $(\alpha^{ai}, \beta^{bn}) \cdot Q_l = \beta^{d(l)bn} Q_l = Q_l$, that is $Q_l \in A_{i,n,t,x}$, which is a contradiction. So $j \neq n$. And, for some $l \geq 2$, $\frac{n}{j} \mid d(l) \Longrightarrow n \mid d(l)j$. Then, $(\alpha^{ai}, \beta^{bj}) \cdot Q_l = \beta^{d(l)bj} Q_l  = Q_l$, that is $Q_l \in A_{i,j,t,x}$, which is again a contradiction. So, $\frac{n}{j} \nmid d(l) \, \forall \, l \geq 2$.
	\par  Conversely, suppose $j \neq n$ and $\frac{n}{j} \nmid d(l) \, \forall \, l \geq 2$, that is, $\frac{n}{j} \nmid d(l) \, \forall \, l \geq 1$. Now, $(x,t) = 1 \Longrightarrow ax \equiv 1 (\text{mod }t )$ for some $a \in \ZZ$, so, $(\alpha^{ai}, \beta^{j}) \in H_{i,j,t,x}$. From (\ref{Q_m as eigenfunction}), $(\alpha^{ai}, \beta^{j}) \cdot Q_l = \beta^{d(l)j} Q_l \neq Q_l$ for all $l \geq 1$, as $n \nmid d(l)j$. So we have $Q_l \notin A_{i,j,t,x} \, \forall \, l \geq 1$. Hence $S^{R_{\mathfrak{m}}} (\nu)$ is not finitely generated over $S^{(A_{i,j,t,x})_{\mathfrak{n}}} (\nu)$.
\end{proof}

\begin{Proposition}\label{gcd > t}
	Let $i,j,t,x$ be positive integers satisfying the conditions of Theorem \ref{i,j,t,x theorem}, such that $( \frac{m}{i}, \frac{n}{j} ) > t \geq 1$. Suppose that $\nu$ is a rational rank $1$ non discrete valuation dominating $R_{\mathfrak{m}}$, with a generating sequence (\ref{Gen Seq}) $\{Q_l \}_{l \geq 0}$, where $Q_0 = X, Q_1 = Y$ as in Section \ref{GS}. Then $\{Q_l \}_{l \geq 0}$ is not a sequence of eigenfunctions for $H_{i,j,t,x}$. 
\end{Proposition}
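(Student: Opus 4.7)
The plan is to pinpoint $Q_2$, the first non-monomial entry in the generating sequence, and show that it alone cannot be an eigenfunction of $H_{i,j,t,x}$ under the given hypothesis; since the proposition concerns the full sequence $\{Q_l\}_{l \geq 0}$, failure at any single index suffices.

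First I would recall from Section $8$ of [\ref{CV1}] the explicit form of $Q_2$: since $Q_0 = X$ and $Q_1 = Y$, the algorithm yields
$$
Q_2 = Y^{\overline{m_1}} - \lambda X^p, \qquad \lambda \in K^{*},
$$
where $p \in \ZZ_{> 0}$ is determined by the relation $\overline{m_1}\gamma_1 = p\gamma_0$. Because $\overline{m_1}$ is by definition the minimal positive integer with $\overline{m_1}\gamma_1 \in \gamma_0 \ZZ$, the coprimality $(\overline{m_1}, p) = 1$ is automatic. Writing $\frac{m}{i} = tu$ and $\frac{n}{j} = tv$, the hypothesis $(\frac{m}{i}, \frac{n}{j}) > t$ translates to $(u, v) > 1$.

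I would then exhibit two distinguished elements of $H_{i,j,t,x}$: taking $(a,b) = (0,t)$ gives $(1, \beta^{tj}) \in H_{i,j,t,x}$, while $(a,b) = (t,0)$ gives $(\alpha^{it}, 1) \in H_{i,j,t,x}$, both admissible since in each case $b \equiv ax \pmod{t}$. Assuming for contradiction that $Q_2$ is an eigenfunction, comparing
$$
(1, \beta^{tj}) \cdot Q_2 = \beta^{tj\overline{m_1}} Y^{\overline{m_1}} - \lambda X^p
$$
with a scalar multiple of $Q_2$ and using $\lambda \neq 0$ forces $\beta^{tj\overline{m_1}} = 1$, i.e., $\frac{n}{j} \mid t\overline{m_1}$, equivalently $v \mid \overline{m_1}$. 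The symmetric computation with $(\alpha^{it}, 1)$ yields $\alpha^{itp} = 1$, equivalently $u \mid p$. Then $(u,v)$ divides both $\overline{m_1}$ and $p$, so $(u,v) \mid (\overline{m_1}, p) = 1$, contradicting $(u,v) > 1$.

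The only point requiring care, and the one I would cite most carefully, is that $\lambda \neq 0$ in the expression for $Q_2$: this follows from property $1)$ of Section~\ref{GS}, namely $\gamma_2 > \overline{m_1}\gamma_1$, since the two monomials $Y^{\overline{m_1}}$ and $\lambda X^p$ both have value $\overline{m_1}\gamma_1$ and must cancel in the associated graded ring for $Q_2$ to attain a strictly larger value, which would be impossible if $\lambda = 0$. Everything else reduces to an elementary order-of-root-of-unity computation.
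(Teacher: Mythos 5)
Your proposal is correct and follows the same approach as the paper: both isolate $Q_2 = Y^{\overline{m_1}} - \lambda X^p$, apply the two group elements $(\alpha^{ti},1)$ and $(1,\beta^{tj})$ to extract the divisibility constraints $\frac{n}{tj}\mid\overline{m_1}$ and $\frac{m}{ti}\mid p$, and contradict the minimality of $\overline{m_1}$. The only cosmetic difference is that you route the contradiction through the coprimality $(\overline{m_1},p)=1$, while the paper directly produces a positive integer $q<\overline{m_1}$ with $q\gamma_1\in\gamma_0\ZZ$; these are equivalent consequences of minimality.
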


\begin{proof}
	Let $d = ( \frac{m}{i}, \frac{n}{j})$. Then $1 \leq t < d \leq  \text{min }\{ \frac{m}{i}, \frac{n}{j} \}$. So, $t < \frac{m}{i} \text{ and } t < \frac{n}{j}$. We recall, $H_{i,j,t,x} = \{ ( \alpha^{ai}, \beta^{bj} ) \, | \, b \equiv ax (\text{mod } t) \}$. Thus $(\alpha^{ti}, 1) , \, (1, \beta^{tj}) \in H_{i,j,t,x}$. Let $\{ Q_l\}_{l\geq 0}$ be the generating sequence (\ref{Gen Seq}) with $Q_0 = X, \, Q_1 = Y$. Let $\nu (Q_l) = \gamma_l \, \forall \, l \geq 0$. By Equation $(8)$ in [\ref{CV1}], $Q_2 = Y^s - \lambda X^r$, where $\lambda \in K \setminus \{ 0 \}$, $s \gamma_1 = r \gamma_0$, and $s$ = min $\{q \in \mathbb{Z}_{> 0} \mid q \gamma_1 \in \gamma_0 \mathbb{Z} \}$. From (\ref{K-algebra action}), we have,
	\begin{align*}
	(\alpha^{ti}, 1) \cdot Q_2 &= (\alpha^{ti}, 1) \cdot [Y^s - \lambda X^r] = Y^s - \lambda \alpha^{rti} X^r .\\
	(1, \beta^{tj}) \cdot Q_2 &=  (1, \beta^{tj}) \cdot [Y^s - \lambda X^r]  =  \beta^{stj} Y^s - \lambda X^r .\\
	\end{align*} 
	If $Q_2$ was an eigenfunction of $H_{i,j,t,x}$, then $m \mid rti \Longrightarrow r = r_1 \frac{m}{ti}$, where $r_1 \in \mathbb{Z}_{> 0}$. Similarly, $n \mid stj  \Longrightarrow s = s_1 \frac{n}{tj}$, where $s_1 \in \mathbb{Z}_{> 0}$. And, $s \gamma_1 = r \gamma_0 \Longrightarrow s_1 \frac{n}{tj} \gamma_1 = r_1 \frac{m}{ti} \gamma_0$. So, $s_1 \frac{n}{dj} \gamma_1 = r_1 \frac{m}{di} \gamma_0$. Now, $d \, | \, \frac{n}{j}$ implies $ s_1 \frac{n}{dj} \in \mathbb{Z}_{> 0}$. Similarly, $r_1 \frac{m}{di} \in \mathbb{Z}_{> 0}$. Thus, $s_1 \frac{n}{dj} \gamma_1 \in \gamma_0 \mathbb{Z}$. But $t < d$ implies $s_1 \frac{n}{dj} < s_1 \frac{n}{tj} = s$, and this contradicts the minimality of $s$. Thus $Q_2$ is not an eigenfunction of $H_{i,j,t,x}$. So, $\{ Q_l\}_{l \geq 0}$ is not a generating sequence of eigenfunctions for $H_{i,j,t,x}$.		
\end{proof}

We know, if $\omega$ is a primitive $l$-th root of unity in $K$, then $\{ \omega^k \mid 1 \leq k \leq l \}$ is a complete list of all $l$-th roots of unity in $K$, and $\{ \omega^k \mid 1 \leq k \leq l \text{ and } (k,l) = 1 \} $ is a complete list of all primitive $l$-th roots of unity in $K$. 
\newline We have, $\alpha$ is a primitive $m$-th root of unity and $\beta$ is a primitive $n$-th root of unity in K. Let $\delta$ be a primitive $mn$-th root of unity in $K$. Then $\delta^{n}$ is a primitive $m$-th root of unity. Now, $S_{\alpha} = \{ \alpha^k \mid 1 \leq k \leq m \text{ and } (k,m) = 1 \}$ is a complete list of all primitive $m$-th roots of unity in $K$. And, $S_{\delta^n} = \{ \delta^{k n} \mid 1 \leq k \leq m \text{ and } (k,m) = 1 \}$ is also a complete list of all primitive $m$-th roots of unity. Thus, $\alpha = \delta^{w_1 n} \text{ where } (w_1, m) = 1 \text{ and } 1 \leq w_1 \leq m$. Similarly, $\beta = \delta^{w_2 m} \text{ where } (w_2, n) = 1 \text{ and } 1 \leq w_2 \leq n$.

\begin{Remark}\label{relation between alpha and beta}
	Let $p,q \in \mathbb{Z}$. With the notation introduced above, $\beta^p = \alpha^q \Longleftrightarrow \frac{p w_2}{n} - \frac{q w_1}{m} \in \mathbb{Z}$.
\end{Remark}

\begin{proof}
	We have, $\beta = \delta^{w_2 m}$ and $\alpha = \delta^{w_1 n}$, where $\delta$ is a primitive $mn$-th root of unity. 
	\newline Thus, $\beta^p = \alpha^q \Longleftrightarrow \delta^{w_2 m p} = \delta^{w_1 n q} \Longleftrightarrow mn \mid (w_2 mp - w_1 nq) \Longleftrightarrow \frac{p w_2}{n} - \frac{q w_1}{m} \in \mathbb{Z}$.
\end{proof}

\begin{Proposition}\label{p nmid N}
	Let $i,j,t,x$ be positive integers satisfying the conditions of Theorem \ref{i,j,t,x theorem}, such that $ (\frac{m}{i}, \frac{n}{j} ) = t, \, t > 1$. Set $\frac{m}{i} = Mt$, and $\frac{n}{j} = Nt$, where $M,N \in \mathbb{Z}_{> 0}$ and $(M,N) = 1$. Suppose that $\exists$ a prime number $p$ such that $p \mid t$ but $p \nmid N$. Suppose that $\nu$ is a rational rank $1$ non discrete valuation dominating $R_{\mathfrak{m}}$ with a generating sequence (\ref{Gen Seq}) of eigenfunctions for $H_{i,j,t,x}$. Then $S^{R_{\mathfrak{m}}} (\nu)$ is not finitely generated over $S^{(A_{i,j,t,x})_{\mathfrak{n}}} (\nu)$.
\end{Proposition}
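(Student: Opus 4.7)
The plan is to reduce the problem to showing $p \nmid \overline{m_k}$ for every $k \geq 1$, and then to establish this by induction on $k$ using the arithmetic consequences of the eigenfunction conditions. By Lemma \ref{not f.g condition}, non-finite generation is equivalent to $j \neq n$ together with $Nt = \tfrac{n}{j} \nmid d(l)$ for all $l \geq 2$. The first condition holds automatically since $Nt > 1$; since $p \mid Nt$ and $d(l) = \overline{m_1}\cdots\overline{m_{l-1}}$, the second follows once I prove $p \nmid \overline{m_k}$ for all $k \geq 1$.

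To set up the arithmetic, put $\zeta = \alpha^i$ and $\eta = \beta^j$, and use Remark \ref{relation between alpha and beta} to fix $s$ with $\gcd(s,t)=1$, $1 \leq s \leq t$, and $\zeta^M = \eta^{Ns}$ (well defined because $\zeta^M$ and $\eta^N$ are both primitive $t$-th roots of unity). For each $r \geq 1$ the minimal expression $\overline{m_r}\gamma_r = a_0^{(r)}\gamma_0 + \sum_{u=1}^{r-1} a_u^{(r)}\gamma_u$ (with $0 \leq a_u^{(r)} < \overline{m_u}$) corresponds via Equation $(8)$ of [\ref{CV1}] to $Q_{r+1} = Q_r^{\overline{m_r}} - \lambda_r X^{a_0^{(r)}} Q_1^{a_1^{(r)}} \cdots Q_{r-1}^{a_{r-1}^{(r)}}$. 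By (\ref{Q_m as eigenfunction}), requiring $Q_{r+1}$ to be an eigenfunction amounts to the character identity $\alpha^{ai \cdot a_0^{(r)}} = \beta^{bj\Delta_r}$ for every $(\alpha^{ai}, \beta^{bj}) \in H_{i,j,t,x}$, where $\Delta_r := \overline{m_r}d(r) - \sum_{u=1}^{r-1} a_u^{(r)} d(u)$. Evaluating this on the generators $(0,t)$ and $(1,x)$ of $H_{i,j,t,x}$ and applying Remark \ref{relation between alpha and beta} forces $N \mid \Delta_r$, $M \mid a_0^{(r)}$, and, writing $\Delta_r = Ne_r$ and $a_0^{(r)} = Ma_0^{(r)\prime}$, $xe_r \equiv sa_0^{(r)\prime} \pmod{t}$; multiplying by $NM$ and reducing modulo $p$ (legal since $p \mid t$) yields the master congruence
\begin{equation*}
Ns\, a_0^{(r)} + Mx\sum_{u=1}^{r-1} a_u^{(r)} d(u) \equiv Mx\,\overline{m_r}\,d(r) \pmod{p},
\end{equation*}
which I denote by $(\star_r)$.

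For the base case $k=1$, set $r_1 := a_0^{(1)}$, so $\overline{m_1}\gamma_1 = r_1\gamma_0$ and the minimality of $\overline{m_1}$ gives $\gcd(\overline{m_1}, r_1) = 1$. Identity $(\star_1)$ reads $Nsr_1 \equiv Mx\overline{m_1} \pmod{p}$, so $p \mid \overline{m_1}$ would force $p \mid Nsr_1$; since $p \nmid N$ and $p \nmid s$ (the latter because $\gcd(s,t)=1$ and $p \mid t$) this gives $p \mid r_1$, a contradiction. For the inductive step I assume $p \nmid \overline{m_u}$ for $1 \leq u < k$, hence $p \nmid d(r)$ for all $r \leq k$. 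Define the integer
\begin{equation*}
N_r := a_0^{(r)} d(r) + \sum_{u=1}^{r-1} a_u^{(r)} N_u\, \frac{d(r)}{d(u+1)},
\end{equation*}
which equals $\overline{m_r}\gamma_r d(r)/\gamma_0$ (using $d(u+1)=\overline{m_u}d(u)$ and $\gamma_u/\gamma_0 = N_u/d(u+1)$ inductively). Since $G(\gamma_0,\ldots,\gamma_{k-1}) = (\gamma_0/d(k))\ZZ$ and the quotient $G(\gamma_0,\ldots,\gamma_k)/G(\gamma_0,\ldots,\gamma_{k-1})$ is cyclic of order $\overline{m_k}$ generated by the class of $\gamma_k = (N_k/\overline{m_k})(\gamma_0/d(k))$, the minimality of $\overline{m_k}$ forces $\gcd(N_k,\overline{m_k})=1$. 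A secondary induction on $r \leq k$ --- using $(\star_r)$ to replace $Nsa_0^{(r)}d(r)$ by $Mx\overline{m_r}d(r)^2 - Mxd(r)\sum a_u^{(r)} d(u) \pmod{p}$, and the inductive formula $NsN_u \equiv Mx\overline{m_u}d(u)^2 \pmod{p}$ (for $u<r$) together with $d(u+1) = \overline{m_u}d(u)$ to rewrite $NsN_u \, d(r)/d(u+1) \equiv Mxd(u)d(r) \pmod{p}$ --- makes the two sums over $u$ cancel, yielding $Ns\,N_r \equiv Mx\,\overline{m_r}\,d(r)^2 \pmod{p}$. Specializing to $r=k$ under the assumption $p \mid \overline{m_k}$ gives $p \mid NsN_k$, hence $p \mid N_k$, contradicting $\gcd(N_k,\overline{m_k})=1$.

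The principal obstacle is carrying out the secondary induction establishing $NsN_r \equiv Mx\overline{m_r}d(r)^2 \pmod{p}$: one must verify the exact telescoping of the two sums and that the various reductions and inversions modulo $p$ (by $Ns$, $d(r)$, $d(u+1)$, $\overline{m_u}$) are all legitimate. The outer inductive hypothesis $p \nmid d(r)$ for $r \leq k$, together with $p \nmid N$ and $p \nmid s$, guarantees that every quantity being divided or inverted is a unit modulo $p$; once this bookkeeping is checked, the contradiction with the minimality of $\overline{m_k}$ is immediate.
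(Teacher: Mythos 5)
Your proof is correct and follows the same global strategy as the paper's: reduce via Lemma \ref{not f.g condition} to showing $p \nmid \overline{m_k}$ for every $k \geq 1$, and establish this by induction using the eigenfunction condition on the recursion $Q_{r+1} = Q_r^{\overline{m_r}} - \lambda_r X^{a_0^{(r)}}Q_1^{a_1^{(r)}}\cdots Q_{r-1}^{a_{r-1}^{(r)}}$ together with Remark \ref{relation between alpha and beta}. The bookkeeping is a little different. You track the integer $N_r = \overline{m_r}\gamma_r d(r)$ and prove the invariant $Ns\,N_r \equiv Mx\,\overline{m_r}\,d(r)^2 \pmod p$, deriving the contradiction from $\gcd(N_k,\overline{m_k})=1$; the paper instead writes $\gamma_k = a_k/b_k$ in lowest terms, proves $a_k \equiv b_k M N_1 x w_2 \overline{w_1}\, d(k) \pmod p$ together with $p\nmid b_k$, and, when $p\mid\overline{m_l}$ with $\overline{m_l}=pM_l$, exhibits an explicit relation $M_l\gamma_l \in G(\gamma_0,\ldots,\gamma_{l-1})$ contradicting minimality. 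The two contradictions are equivalent, but your formulation has a small structural advantage that you could exploit to streamline the write-up: since the telescoping that yields $Ns\,N_r \equiv Mx\,\overline{m_r}\,d(r)^2 \pmod p$ is entirely integral (every coefficient $d(r)/d(u+1)$ is a positive integer and nothing is inverted), the outer inductive hypothesis $p\nmid\overline{m_u}$ for $u<k$ is in fact never used. The whole argument collapses to a single induction on $r$ establishing that congruence for all $r$, after which $p\mid\overline{m_r}$ would force $p\mid N_r$, contradicting $\gcd(N_r,\overline{m_r})=1$ directly; the double induction you set up, and the final paragraph about unit checks modulo $p$, can be deleted.
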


\begin{proof}
	Let $\{ Q_l \}_{l \geq 0}$ be the generating sequence (\ref{Gen Seq}) of the valuation $\nu$, where $Q_0 = X, Q_1 = Y$, and each $Q_l$ is an eigenfunction for $H_{i,j,t,x}$. Let $\gamma_l = \nu (Q_l) \, \forall \, l \geq 0$. Without any loss of generality, we can assume $\gamma_0 = 1$. Since $\nu$ is a rational valuation, we can write $\gamma_k = \frac{a_k}{b_k} \, \forall \, k \geq 1$, where $(a_k, b_k) = 1$. We have, $p \mid t$, and $p \nmid N$ for a prime $p$. So $(p,N) = 1$. So $\exists N_1 \in \mathbb{Z}$ such that $N N_1 \equiv 1 (\text{mod }p)$. Let $w_1$ and $w_2$ be as in Remark \ref{relation between alpha and beta}. Now $(m, w_1) = 1$ and $t \mid m$. So $(t, w_1) = 1$. So $(p, w_1) = 1$. So $\exists \, \overline{w_1} \in \mathbb{Z}$ such that $w_1 \overline{w_1} \equiv 1 (\text{mod }p)$.  
	\newline We now use induction to show the following $\forall \, k \geq 1$,
	\begin{equation}\label{induction for p nmid N}
	\begin{split} 
	(p , \overline{m_k}) &= 1, \, (p, b_k) = 1 \\
	a_k &\equiv b_k M N_1 x w_2 \overline{w_1} d(k) \,  (\text{mod }p) .\\
	\end{split}
	\end{equation}
	We have $\gamma_1 = \frac{a_1}{b_1}$, where $(a_1, b_1) = 1$. So $\overline{ m_1} = b_1$. By Equation $(8)$ in [\ref{CV1}], we have $Q_2 = Y^{b_1} - \lambda_1 X^{a_1}$, for some $\lambda_1 \in K \setminus \{0\}$. Recall, $H_{i,j,t,x} = \{ (\alpha^{ai}, \beta^{bj}) \, | \, b \equiv ax (\text{mod }t) \}$. So $(\alpha^i, \beta^{xj}) \in H_{i,j,t,x}$. Now, $(\alpha^i, \beta^{xj}) \cdot Q_2 = \beta^{b_1 xj} Y^{b_1} - \lambda_1 \alpha^{a_1 i} X^{a_1}$. Since $Q_2$ is an eigenfunction for $H_{i,j,t,x}$, we have 
	\begin{align*}
	\beta^{b_1 xj} = \alpha^{a_1 i} &\Longrightarrow \frac{b_1 xj w_2}{n} - \frac{a_1 i w_1}{m} \in \mathbb{Z} \text{ by Remark }\ref{relation between alpha and beta}\\
	&\Longrightarrow \frac{b_1 x w_2}{Nt} - \frac{a_1 w_1}{Mt} \in \mathbb{Z} \\
	&\Longrightarrow MNt \mid [b_1 xM w_2 - a_1 N w_1] \\
	&\Longrightarrow b_1 M N_1 x w_2 \overline{w_1} \equiv a_1  (\text{mod }p) \text{ as } p \mid t.\\
	\end{align*}
	If $(p,b_1) \neq 1$, then $p \mid b_1 \Longrightarrow p \mid a_1$. But this contradicts $(a_1, b_1) = 1$. So, $(p, b_1) = 1$. Since $\overline{ m_1} = b_1$, we thus have $(p, \overline{ m_1}) = 1$. Thus we have the induction step for $k = 1$. 
	\newline Suppose (\ref{induction for p nmid N}) is true for $k = 1, \cdots , l-1$. From (\ref{Q_m as eigenfunction}) we have $(\alpha^{ai}, \beta^{bj}) \cdot Q_k = \beta^{ d(k) bj} Q_k \, \forall \, k \geq 1$, $\forall \, (\alpha^{ai}, \beta^{bj}) \in H_{i,j,t,x}$. By Equation $(8)$ in [\ref{CV1}] we have, $Q_{l+1} = Q_l ^{\overline{ m_l}} - \lambda_l X^{c_0} Y^{c_1} Q_2 ^{c_2} \cdots Q_{l-1} ^{c_{l-1}}$ where $\lambda_l \in K \setminus\{ 0 \}, \, 0 \leq c_k < \overline{ m_k} \, \forall \, k = 1, \cdots , l-1$ and $\overline{ m_l} \gamma_l = \sum_{k=0}^{l-1} c_k \gamma_k$.
	\newline $(\alpha^i, \beta^{xj}) \cdot Q_{l+1} = \beta^{xj \overline{ m_l} d(l)} Q_l ^{\overline{ m_l}} - \lambda_l \alpha^{i c_0} \beta^{xj [\sum_{k=1}^{l-1} c_k d(k)]} X^{c_0} Y^{c_1} Q_2 ^{c_2} \cdots Q_{l-1} ^{c_{l-1}}$. Since $Q_{l+1}$ is an eigenfunction for $H_{i,j,t,x}$, we have
	\begin{align*}
	&\beta^{xj \overline{ m_l} d(l)} = \alpha^{i c_0} \beta^{xj [\sum_{k=1}^{l-1} c_k d(k)]} \\
	\Longrightarrow &\beta^{xj[\overline{ m_l} d(l) - \sum_{k=1}^{l-1} c_k d(k)]} = \alpha^{i c_0} \\
	\Longrightarrow & \frac{x[\overline{ m_l} d(l) - \sum_{k=1}^{l-1} c_k d(k)] w_2}{Nt} - \frac{c_0 w_1}{Mt} \in \mathbb{Z} \text{ by Remark }\ref{relation between alpha and beta}\\
	\Longrightarrow &MNt \mid \mathlarger{[}Mxw_2 \overline{ m_l} d(l) - M x w_2 \sum_{k=1}^{l-1} c_k d(k) - N c_0 w_1 \mathlarger{]}\\
	\Longrightarrow &p \mid \mathlarger{[} Mx w_2 \overline{ m_l} d(l) - M x  w_2 \sum_{k=1}^{l-1} c_k d(k) - N c_0 w_1 \mathlarger{]} \\
	\Longrightarrow &M N_1 x w_2 \overline{w_1} \, \overline{ m_l} d(l) \equiv \mathlarger{[} M N_1 x w_2 \overline{w_1} \sum_{k=1}^{l-1} c_k d(k) + c_0 \mathlarger{]} (\text{mod }p) .\\
	\end{align*}	
	Now, $p \mid \overline{ m_l}  \Longrightarrow c_0 = \lambda p - M N_1 x w_2 \overline{w_1} \sum_{k=1}^{l-1} c_k d(k)$, where $\lambda \in \mathbb{Z}$. Let $\overline{ m_l} = p M_l$, where $M_l \in \mathbb{Z}_{> 0}$. So, $\overline{ m_l} \gamma_l = p M_l \gamma_1 = c_0 + \sum_{k=1}^{l-1} c_k \gamma_k = \lambda p + \sum_{k=1}^{l-1} c_k [ \gamma_k - M N_1 x w_2 \overline{w_1} d(k) ] $.	
	\newline By our induction statement, $\forall \, k = 1 , \cdots , l-1$, we have $a_k = t_k p + b_k M N_1 x w_2 \overline{w_1} d(k)$, where $t_k \in \mathbb{Z}$. Thus,
	\begin{equation*}
	p M_l \gamma_l = \lambda p + \sum_{k=1}^{l-1} c_k [ \frac{t_k p + b_k M N_1 x w_2 \overline{w_1} d(k)}{b_k} - M N_1 x w_2 \overline{w_1} d(k) ] =  \lambda p +  p \sum_{k=1}^{l-1} c_k t_k \frac{1}{b_k}.
	\end{equation*} 
	Now $(a_k, b_k) = 1 \Longrightarrow \exists \, h_k \in \ZZ$ such that $h_k a_k \equiv 1 (\text{mod }b_k)$. Let $h_k a_k - 1 = \zeta_k b_k$, where $\zeta_k \in \mathbb{Z}$.
	So, $\frac{1}{b_k} = \frac{h_k a_k - (h_k a_k -1)}{b_k} = h_k \gamma_k - \zeta_k$. Then, $p M_l \gamma_l = \lambda p +  p \sum_{k=1}^{l-1} c_k t_k [h_k \gamma_k - \zeta_k]$ implies
	\begin{equation*}
	M_l \gamma_l = \lambda  +  \sum_{k=1}^{l-1} c_k t_k [h_k \gamma_k - \zeta_k] \in G(\gamma_0, \cdots , \gamma_{l-1}) .
	\end{equation*}
	But this contradicts the minimality of $\overline{ m_l}$. So $p \nmid \overline{ m_l}$. So $(p, \overline{ m_l}) = 1$.
	\par Now, $\overline{ m_l} \gamma_l =  c_0 + \sum_{k=1}^{l-1} c_k \gamma_k \Longrightarrow \overline{ m_l} \frac{a_l}{b_l} = c_0 + \sum_{k=1}^{l-1} c_k \frac{a_k}{b_k} \Longrightarrow \overline{ m_l} a_l \prod_{k=1}^{l-1} b_k = c_0 B  + B \sum_{k=1}^{l-1} c_k \frac{a_k}{b_k}$, where $B = \prod_{k=1}^{l} b_k$. From the induction hypothesis, $\frac{a_k}{b_k} B = [t_k p + b_k M N_1 x w_2 \overline{w_1} d(k)] \frac{B}{b_k}$. So, 
	\begin{align*}
	\overline{ m_l} a_l \prod_{k=1}^{l-1} b_k & = c_0 B + \sum_{k=1}^{l-1} c_k [t_k p + b_k M N_1 x w_2 \overline{w_1} d(k)] \frac{B}{b_k} \\
	\Longrightarrow	\overline{ m_l} a_l \prod_{k=1}^{l-1} b_k & \equiv [c_0  + M N_1 x w_2 \overline{w_1} \sum_{k=1}^{l-1} c_k d(k) ] B (\text{mod }p) .\\
	\end{align*}
	Since, $M N_1 x w_2 \overline{w_1} \, \overline{ m_l} d(l) \equiv [ M N_1 x w_2 \overline{w_1} \sum_{k=1}^{l-1} c_k d(k) + c_0 ] (\text{mod }p)$, we have
	\begin{align*}
	\overline{ m_l} a_l \prod_{k=1}^{l-1} b_k  \equiv M N_1 x w_2 \overline{w_1} \, \overline{ m_l} d(l) \prod_{k = 1}^{l} b_k (\text{mod }p) .
	\end{align*}
	Since $(p, \overline{ m_l}) = 1, \, (p, b_k) = 1 \, \forall \, k = 1, \cdots , l-1$, we have $a_l  \equiv M N_1 x w_2 \overline{w_1} d(l) b_l \, (\text{mod }p)$. If $p \mid b_l$, then $p \mid a_l$ which contradicts $(a_l, b_l) = 1$. So $(p, b_l) = 1$. Thus we have the induction step for $k = l$.
	\par In particular, by induction we have $(p, \overline{ m_k}) = 1 \, \forall \, k \geq 1$. Since $d(k) = \overline{ m_1} \cdots \overline{ m_{k-1}}$ (by $2)$, Section \ref{GS}), we have $(p, d(k)) = 1 \, \forall \, k \geq 2$. So $p \nmid d(k) \, \forall \, k \geq 2 \Longrightarrow t \nmid d(k) \, \forall \, k \geq 2 \Longrightarrow \frac{n}{j} = Nt  \nmid d(k) \, \forall \, k \geq 2$. Thus by Lemma \ref{not f.g condition}, we have $S^{R_{\mathfrak{m}}} (\nu)$ is not finitely generated over $S^{(A_{i,j,t,x})_{\mathfrak{n}}} (\nu)$.
\end{proof}

\begin{Proposition}\label{p divides N}
	Let $i,j,t,x$ be positive integers satisfying the conditions of Theorem \ref{i,j,t,x theorem}, such that $(\frac{m}{i}, \frac{n}{j}) = t$ and $t > 1$. Set $\frac{m}{i} = Mt$ and $\frac{n}{j} = Nt$ where $M,N \in \mathbb{Z}_{> 0} $ and $(M,N) = 1$. Suppose that for any prime number $p$ which divides $t$, the number $p$ also divides $N$. Suppose that $\nu$ is a rational rank $1$ non discrete valuation dominating $R_{\mathfrak{m}}$ with a generating sequence (\ref{Gen Seq}) of eigenfunctions for $H_{i,j,t,x}$. Then $S^{R_{\mathfrak{m}}} (\nu)$ is not finitely generated over $S^{(A_{i,j,t,x})_{\mathfrak{n}}} (\nu)$.
\end{Proposition}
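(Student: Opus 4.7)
By Lemma~\ref{not f.g condition}, the conclusion is equivalent to $j \neq n$ together with $\tfrac{n}{j} = Nt \nmid d(l)$ for every $l \geq 2$; since $t > 1$ forces $Nt > 1$, the former is automatic, and I only need to show $Nt \nmid d(l)$ for every $l \geq 2$. I will fix any prime $p \mid t$, which by hypothesis also divides $N$. Set $e = v_p(t) \geq 1$ and $f = v_p(N) \geq 1$, so that $v_p(Nt) = e+f$. Writing $\gamma_k = a_k/b_k$ in lowest terms, the recursions $\overline{m_k} = b_k/\gcd(b_k, d(k))$ and $d(k+1) = \overline{m_k}\,d(k)$ give $d(l) = \operatorname{lcm}(b_1, \ldots, b_{l-1})$ for $l \geq 2$, so $v_p(d(l)) = \max_{k<l} v_p(b_k)$. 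It will therefore suffice to prove $v_p(b_k) \leq f$ for every $k \geq 1$, which yields $v_p(d(l)) \leq f < e+f = v_p(Nt)$.

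I will establish this by induction on $k$, mirroring the inductive scheme of Proposition~\ref{p nmid N} but operating modulo $p^{e+f}$ instead of modulo $p$. For the base case $k = 1$, the eigenfunction condition on $Q_2 = Y^{b_1} - \lambda X^{a_1}$ together with Remark~\ref{relation between alpha and beta} translates $\beta^{xjb_1} = \alpha^{ia_1}$ into $MNt \mid Mxw_2 b_1 - Nw_1 a_1$; a $p$-adic valuation comparison using $(a_1, b_1) = 1$ and $(Mxw_2 w_1, p) = 1$ forces $v_p(b_1) = f$ exactly. For the inductive step, assume $v_p(b_i) \leq f$ for every $i < k$, so $v_p(d(k)) \leq f$, and suppose for contradiction that $v_p(b_k) \geq f+1$. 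Then $p \mid \overline{m_k}$ and $v_p(d(k+1)) = v_p(b_k) \geq f+1$. Writing $D_k = d(k+1) - \sum_{i=1}^{k-1} c_i d(i)$, the eigenfunction constraint on $Q_{k+1}$ produces two divisibilities via the generators $(1,\beta^{tj})$ and $(\alpha^i, \beta^{xj})$ of $H_{i,j,t,x}$: namely $N \mid D_k$, and, after substituting $D_k = N D_k'$, the further condition $M \mid c_0$ together with $t \mid xw_2 D_k' - c_0' w_1$ where $c_0' = c_0/M$. Multiplying the defining identity $\overline{m_k}\gamma_k = c_0 + \sum c_i \gamma_i$ through by $d(k)$ and analyzing the resulting integer equation $d(k+1) a_k/b_k = c_0 d(k) + \sum_{i=1}^{k-1} c_i d(k) a_i / b_i$ modulo the appropriate power of $p$ then produces the desired contradiction.

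The main obstacle is this inductive step. In Proposition~\ref{p nmid N} the assumption $p \nmid N$ provided a clean modulo-$p$ inverse of $N$, and the entire induction could be run at the scale $p$. In the present setting $p \mid N$, so $N$ is not invertible modulo $p$, and the analysis must be carried out at the finer scale $p^{e+f}$, tracking several layers of $p$-adic cancellation simultaneously between $c_0 d(k)$, the sum $\sum c_i d(k) a_i/b_i$, and $d(k+1) a_k/b_k$. The crucial leverage comes from the divisibility $N \mid D_k$, which forces $v_p(D_k) \geq f$; combined with $v_p(d(k+1)) \geq f+1$ under the contradiction hypothesis, this tightly constrains $v_p\bigl(\sum c_i d(i)\bigr)$, and a careful inspection of the leading $p$-adic coefficients should produce the incompatibility with $v_p(b_k) \geq f+1$, either directly or by contradicting the minimality of $\overline{m_k}$. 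Once the induction is complete, the proposition follows immediately from Lemma~\ref{not f.g condition}.
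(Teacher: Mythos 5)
Your reduction via Lemma~\ref{not f.g condition} and the $p$-adic reformulation of the target are sound: fixing a prime $p \mid t$ (hence $p \mid N$), with $e = v_p(t)$, $f = v_p(N)$, it does suffice to show $v_p(d(l)) \leq f$ for all $l \geq 2$, and since $d(l) = \operatorname{lcm}(b_1,\ldots,b_{l-1})$ this amounts to $v_p(b_k) \leq f$ for all $k \geq 1$. Your base case argument, using $MNt \mid Mxw_2 b_1 - Nw_1 a_1$ together with $(Mxw_1w_2,p)=1$ and $(a_1,b_1)=1$, correctly forces $v_p(b_1) = f$. Once $v_p(d(k)) = f$ is known, the statement $v_p(b_k) \leq f$ for $k \geq 2$ is exactly equivalent to $p \nmid \overline{m_k}$, which (ranging over all $p \mid t$) is the paper's inductive claim $(t,\overline{m_k})=1$. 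So you have reformulated the right target.

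The gap is the inductive step, which is the entire substance of the proposition, and you acknowledge as much (``a careful inspection of the leading $p$-adic coefficients should produce the incompatibility''). The claim that the argument of Proposition~\ref{p nmid N} carries over ``modulo $p^{e+f}$'' is not justified: that proof hinges on inverting $N$ modulo $p$ to isolate a congruence $a_k \equiv b_k MN_1xw_2\overline{w_1}d(k) \pmod p$, and no power of $p$ restores invertibility when $p \mid N$. The paper instead introduces a genuinely different device: it decomposes $N = \overline{N}N'$ with $\overline{N}$ the maximal factor coprime to $x$, proves $M \mid a_1$, $\overline{N} \mid b_1$, writes $\gamma_1 = Mu/(\overline{N}r')$ with $(u,t)=(r',t)=1$, and then propagates the explicit structural formula $\gamma_k = Mu\,\overline{m_2}\cdots\overline{m_{k-1}} + M\overline{N}t\lambda_k/(\overline{m_1}\cdots\overline{m_k})$ by induction, from which $(t,\overline{m_k})=1$ falls out by contradicting the minimality of $\overline{m_k}$. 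None of the constraints you extract ($N \mid D_k$, $M \mid c_0$, $t \mid xw_2D_k' - c_0'w_1$) obviously combine to give the contradiction — indeed a direct $v_p$ count of the identity $d(k+1)a_k/b_k = c_0d(k) + \sum c_i(d(k)/b_i)a_i$ gives $v_p = 0$ on the left and only $v_p \geq 0$ on the right, so no contradiction is visible without a finer analysis of cancellation. The inductive step therefore needs to be supplied, and as far as one can tell it requires the extra structure ($\overline{N}$, $r'$, $u$) that the paper builds rather than a routine lift of the modulo-$p$ argument.
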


\begin{proof}
	Since $(x,t) = 1, \, \exists \, r \in \mathbb{Z}_{> 0}$ such that $rx \equiv 1 (\text{mod }t)$. So $(r,t) = 1$. Recall, $\alpha = \delta^{w_1 n},  \beta = \delta^{w_2 m}$, where $\delta$ is a primitive $mn$-th root of unity, and $(w_1,m) = 1, (w_2,n) = 1, 1 \leq w_1 \leq m$ and $1 \leq w_2 \leq n$. Now, $M \mid m \Longrightarrow (w_1, M) = 1$. Similarly, $(w_2, N) = 1$, $(w_1, t) = 1$, $(w_2, t) = 1$. So $\exists \, \overline{ w_1}, \, \overline{ w_2} \in \mathbb{Z}_{> 0}$ such that $w_1 \overline{ w_1} \equiv 1 (\text{mod }t)$ and $w_2 \overline{ w_2} \equiv 1 (\text{mod }t)$.
	\newline Write $N = \overline{N} N^{\prime}$, where $\overline{N}$ is the largest factor of $N$ such that $(\overline{N}, x) = 1$. If $\overline{N} = 1$, then for any prime $p$ dividing $N$, we have $p \mid x$. So in particular $p \mid t \Longrightarrow p \mid x$. But this is a contradiction as $(t,x) = 1$. So $\overline{N} > 1$ if $N > 1$. We will now show $(\overline{N}, N^{\prime}) = 1$. Suppose the contrary. Then $\exists$ a prime $p$ such that $p \mid \overline{N}$ and $p \mid N^{\prime}$. $p \mid \overline{N} \Longrightarrow (p ,x) = 1 \Longrightarrow (\overline{N}p , x) = 1$. And, $\overline{N}N^{\prime} = N \Longrightarrow p\overline{N} \mid N$. This contradicts the maximality of $\overline{N}$. So $(\overline{N}, N^{\prime}) = 1$. Hence $(N,x) = (N^{\prime}, x)$. We will now show that $(t, N^{\prime}) = 1$. Suppose $\exists$ a prime $p$ such that $p \mid t$ and $p \mid N^{\prime}$. Then $p \mid t, p \mid N$ and $p \nmid \overline{N}$. Thus $p \mid t$ and $p \mid x$, which is a contradiction as $t$ and $x$ are coprime. Thus $(t, N^{\prime}) = 1$. Also $(N, w_2) = 1$ implies $(\overline{N}, w_2) = 1$.
	\newline Let $\{ Q_l \}_{l \geq 0}$ be the generating sequence (\ref{Gen Seq}) of the valuation $\nu$, where $Q_0 = X, Q_1 = Y,$ and each $Q_l$ is an eigenfunction for $H_{i,j,t,x}$. Let $\gamma_l = \nu (Q_l) \, \forall \, l \geq 0$. Without any loss of generality, we can assume $\gamma_0 = 1$. Let $\gamma_1 = \frac{a_1}{b_1}$, where $(a_1, b_1) = 1$. So $\overline{ m_1} = b_1$. By Equation $(8)$ in [\ref{CV1}], we have $Q_2 = Y^{b_1} - \zeta_1 X^{a_1}$ for some $\zeta_1 \in K \setminus \{ 0 \}$. Now,$(\alpha^i, \beta^{xj}) \in H_{i,j,t,x}$. By (\ref{Q_m as eigenfunction}), $(\alpha^{ai}, \beta^{bj}) \cdot Q_k = \beta^{ d(k) bj} Q_k \, \forall \, k \geq 1, \, \forall \, (\alpha^{ai}, \beta^{bj}) \in H_{i,j,t,x}$. Now, $(\alpha^i, \beta^{xj}) \cdot Q_2 = (\alpha^i, \beta^{xj}) \cdot [Y^{b_1} - \zeta_1 X^{a_1}] = \beta^{b_1 xj} Y^{b_1} - \zeta_1 \alpha^{a_1 i} X^{a_1}$. Since $Q_2$ is an eigenfunction for $H_{i,j,t,x}$, we have
	\begin{align*}
	\beta^{b_1 xj} = \alpha^{a_1 i} &\Longrightarrow \frac{b_1 xw_2}{Nt} - \frac{a_1w_1}{Mt} \in \mathbb{Z} \text{ by Remark }\ref{relation between alpha and beta}\\
	&\Longrightarrow M\overline{N} t \mid [M b_1 x w_2 - N a_1 w_1]\\
	&\Longrightarrow M \mid a_1 \text{ and } \overline{N} \mid b_1   \text{ as } (\overline{N},w_2) = 1,\, (M,w_1)= 1, \, (M,N) = 1, \, (\overline{N}, x) = 1.\\
	\end{align*}  
	Let $a_1 = M a_1^{\prime}$ and $b_1 = \overline{N} b_1^{\prime}$. Then, $M\overline{N}t \mid [M\overline{N}b_1^{\prime}x w_2 - NMa_1^{\prime}w_1]$ implies $b_1^{\prime} \equiv r a_1^{\prime} w_1 \overline{ w_2} N^{\prime} (\text{mod }t)$ as $rx \equiv 1 (\text{mod }t)$ and $N = \overline{N} N^{\prime}$. Now, $\gamma_1 = \frac{a_1}{b_1} = \frac{M a_1^{\prime}}{\overline{N} b_1^{\prime}}$. $(a_1, b_1) = 1 \Longrightarrow (\overline{N}, a_1^{\prime}) = 1, \, (a_1^{\prime}, b_1^{\prime}) = 1$ and $(M, b_1^{\prime}) = 1$. Rename $a_1^{\prime} = u$ and $b_1^{\prime} = r^{\prime}$. Then $(u, \overline{N}) = 1$. If $(u,t) \neq 1$, then $\exists$ a prime $p$ such that $p \mid t$ and $p \mid u$. Thus $p \mid t$, $p \mid N$ and $p \nmid \overline{N}$, since for any prime $p$ dividing $t$, $p$ also divides $N$. So $p \mid t$ and $p \mid N^{\prime}$. But we have established earlier that $(t, N^{\prime}) = 1$. So $(u,t) = 1$. And, $r^{\prime} \equiv ruw_1 \overline{ w_2} N^{\prime} (\text{mod }t) \Longrightarrow r^{\prime}x \equiv uw_1 \overline{ w_2} N^{\prime} (\text{mod }t)$. Thus, 
	\begin{align}\label{expression of gamma_1}
	\gamma_1 = \frac{Mu}{\overline{N} r^{\prime}} \text{ where } (u, \overline{N}) = 1, (u,t) = 1, (u, r^{\prime}) = 1, (M, r^{\prime}) = 1, r^{\prime} \equiv ruw_1 \overline{ w_2} N^{\prime} (\text{mod }t).
	\end{align}
	We will now use induction to show that $\forall \, k \geq 2$,
	\begin{equation}\label{induction for p divides N}
	\begin{split}
	& \gamma_k = Mu \overline{ m_2} \cdots \overline{ m_{k-1}} + \frac{M\overline{N}t\lambda_k}{\overline{ m_1} \cdots \overline{ m_k} } \text{ for some } \lambda_k \in \mathbb{Z} \\
	& (t, \overline{ m_k}) = 1 .\\ 
	\end{split}
	\end{equation}
	By Equation $(8)$ in [\ref{CV1}] we have, $Q_3 = Q_2^{\overline{ m_2}} - \zeta_2 X^{c_0} Y^{c_1}$ where $\zeta_2 \in K \backslash \{ 0 \}, \,  c_0 \in \mathbb{Z}_{> 0}, \, 0 \leq c_1 < \overline{ m_1}$. $(\alpha^i, \beta^{xj}) \cdot Q_3 = \beta^{xj \overline{ m_2} \, \overline{ m_1}} Q_2^{\overline{ m_2}} - \zeta_2 \alpha^{i c_0 } \beta^{xj c_1} X^{c_0} Y^{c_1}$. Since $Q_3$ is an eigenfunction for $H_{i,j,t,x}$, we have
	\begin{align*}
	\beta^{xj \overline{ m_2} \, \overline{ m_1}} = \alpha^{i c_0 } \beta^{xj c_1} &\Longrightarrow \beta^{xj [\overline{ m_2} \, \overline{ m_1} - c_1]} = \alpha^{i c_0} \\
	&\Longrightarrow \frac{x [\overline{ m_2} \, \overline{ m_1} - c_1] w_2}{Nt} - \frac{c_0 w_1}{Mt} \in \mathbb{Z} \text{ by Remark } \ref{relation between alpha and beta}\\
	&\Longrightarrow M\overline{N}t \mid [M \overline{N}  r^{\prime} x w_2 \overline{ m_2} - Mx w_2 c_1 - N c_0 w_1]  \text{   as } \overline{ m_1} = \overline{N} r^{\prime}\\
	&\Longrightarrow M \mid c_0 \text{ and } \overline{N} \mid c_1  \text{  as } (M,N) = 1, \, (M, w_1) = 1, \, (\overline{N},w_2) = 1, \,  (\overline{N}, x) = 1. \\
	\end{align*}
	Let $c_0 = M c_0^{\prime}$ and $c_1 = \overline{N} c_1^{\prime}$. Plugging them in the above expression and using (\ref{expression of gamma_1}), we obtain,
	\begin{align*}
	& M \overline{N} t \mid [M \overline{N} r^{\prime} x w_2 \overline{ m_2} - M  x w_2 \overline{N} c_1^{\prime} - NMc_0^{\prime} w_1 ] \\
	&\Longrightarrow r^{\prime} x w_2 \overline{ m_2} \equiv [w_1 c_0^{\prime} N^{\prime} + x w_2 c_1^{\prime}] (\text{mod }t) \\
	&\Longrightarrow u w_1 \overline{ m_2} N^{\prime} \equiv [w_1 c_0^{\prime} N^{\prime} + x w_2 c_1^{\prime}] (\text{mod }t) \\
	&\Longrightarrow r^{\prime} u \overline{ m_2} \equiv [r^{\prime} c_0^{\prime} + u c_1^{\prime}] (\text{mod }t). \\
	\end{align*}
	So, $\overline{ m_2} \gamma_2 = c_0 + c_1 \gamma_1 = M c_0^{\prime} + \overline{N} c_1^{\prime} \frac{Mu}{\overline{N} r^{\prime}} = M [\frac{c_0^{\prime} r^{\prime} +  c_1^{\prime} u}{r^{\prime}}] = M[\frac{r^{\prime} u \overline{ m_2} + \lambda_2 t}{r^{\prime}}] = Mu \overline{ m_2} + \frac{M \overline{N} t \lambda_2 }{\overline{ m_1}} $ for some $\lambda_2 \in \ZZ$. Thus, $\gamma_2 = Mu + \frac{M\overline{N}t \lambda_2}{\overline{ m_1} \, \overline{ m_2}}$.
	\newline We will now show $(t, \overline{ m_2}) = 1$. Suppose if possible $\exists$ a prime $p$ such that $p \mid t$ and $p \mid \overline{ m_2}$. Let $\overline{ m_2} = p M_2$. So, $\gamma_2 = Mu + \frac{M\overline{N}t \lambda_2}{\overline{ m_1} \, \overline{ m_2}} \Longrightarrow \overline{ m_2} \gamma_2 = Mu\overline{ m_2} + \frac{M\overline{N}t\lambda_2}{\overline{ m_1}} \Longrightarrow pM_2 \gamma_2 = pMuM_2 + \frac{Mt\lambda_2}{r^{\prime}} \Longrightarrow r^{\prime} M_2 \gamma_2 = r^{\prime} M u M_2 + M \lambda_2 \frac{t}{p}$.
	\newline $(w_1, t) = 1$. $(N^{\prime}, t) = 1$. $rx \equiv 1 (\text{mod }t)$ implies $(r,t) = 1$. $w_2 \overline{ w_2} \equiv 1 (\text{mod }t)$ implies $(\overline{ w_2},t) = 1$. And, $(u,t) = 1$ by (\ref{expression of gamma_1}). So, $r^{\prime} \equiv ruw_1 \overline{ w_2} N^{\prime} (\text{mod }t) \Longrightarrow (r^{\prime}, t) = 1$. So $\exists \, r_1 \in \mathbb{Z}$ such that $r_1 r^{\prime} \equiv 1 (\text{mod }t)$. So in particular, $r_1 r^{\prime} \equiv 1 (\text{mod }p) \, \forall$ prime $p$ dividing $t$. We then have,
	\begin{align*}
	&r_1 r^{\prime} M_2 \gamma_2 = r_1 r^{\prime} MuM_2 + r_1 M \lambda_2 \frac{t}{p} \\
	&\Longrightarrow (1 + \mu_2 p) M_2 \gamma_2 = r_1 r^{\prime} MuM_2  + r_1 M \lambda_2 \frac{t}{p} \text{ for some } \mu_2 \in \mathbb{Z} \\
	&\Longrightarrow M_2 \gamma_2 + \mu_2 \overline{ m_2} \gamma_2 \in \mathbb{Z} \subset G(\gamma_0, \gamma_1) \Longrightarrow M_2 \gamma_2 \in G(\gamma_0, \gamma_1) .\\
	\end{align*}
	But this contradicts the minimality of $\overline{ m_2}$. So for any prime $p$ dividing $t$, we have $p \nmid \overline{ m_2}$. Thus $(t, \overline{ m_2}) = 1$. We now have the induction step for $k = 2$.  	
	\newline Suppose (\ref{induction for p divides N}) is true for $k = 3, \cdots , l-1$. By Equation $(8)$ in [\ref{CV1}] we have,
	$Q_{l+1} = Q_l^{\overline{ m_l}} - \zeta_l X^{c_0} Y^{c_1} Q_2^{c_2} \cdots Q_{l-1}^{c_{l-1}}$ where $\zeta_l \in K \setminus \{0 \}, \, c_0 \in \mathbb{Z}_{> 0}, \, 0 \leq c_k < \overline{ m_k} \, \forall \, k = 1, \cdots , l-1$ and $\overline{ m_l} \gamma_l = \sum_{k=0}^{l-1} c_k \gamma_k$. By $2)$ of Section \ref{GS} we have $d(l) = \prod_{k =1}^{l-1} \overline{ m_k} \, \forall \, l \geq 2$. Again, $\overline{ m_1} = \overline{N} r^{\prime}$ by (\ref{expression of gamma_1}). So $\forall \, l \geq 2$, $d(l) = \overline{N} r^{\prime} \overline{d(l)}$, where $\overline{d(l)} = \frac{d(l)}{\overline{ m_1}}$. Thus, $ \forall \, l \geq 3, \overline{d(l)} = \prod_{k =2}^{l-1} \overline{ m_k}$. 
	\newline Now, $(\alpha^i, \beta^{xj}) \cdot Q_{l+1} = \beta^{xj \overline{ m_l} d(l)} Q_l^{\overline{ m_l}} - \zeta_l \alpha^{i c_0} \beta^{xj [\sum_{k=1}^{l-1} c_k d(k)]} X^{c_0} Y^{c_1} Q_2^{c_2} \cdots Q_{l-1}^{c_{l-1}}$. Since $Q_{l+1}$ is an eigenfunction for $H_{i,j,t,x}$ we have
	\begin{align*}
	&\Longrightarrow \beta^{xj [ d(l+1) -  \sum_{k=1}^{l-1} c_k d(k)] } = \alpha^{i c_0} \\
	&\Longrightarrow \frac{x w_2[ d(l+1) -  \sum_{k=1}^{l-1} c_k d(k)]}{Nt} - \frac{c_0w_1}{Mt} \in \mathbb{Z} \text{ by Remark } \ref{relation between alpha and beta}\\
	&\Longrightarrow M\overline{N}t \mid [Mxw_2 \overline{N}r^{\prime}\overline{d(l+1)} - Mxw_2c_1 - Mxw_2\overline{N}r^{\prime} \sum_{k=2}^{l-1} c_k \overline{d(k)} - Nc_0w_1 ]  \\
	&\Longrightarrow M \mid c_0 \text{ and } \overline{N} \mid c_1  \text{ as } (M,N) = 1, \, (M,w_1)= 1, \, (\overline{N}, x)= 1,\, (\overline{N},w_2) = 1 .\\
	\end{align*}
	Let $c_0 = M c_0^{\prime}$ and $c_1 = \overline{N} c_1^{\prime}$. Plugging them in the above expression, and using (\ref{expression of gamma_1}), we obtain
	\begin{align*}
	&M \overline{N} t \mid [Mxw_2 \overline{N} r^{\prime}\overline{d(l+1)} - Mxw_2 \overline{N} c_1^{\prime} - Mxw_2 \overline{N} r^{\prime} \sum_{k=2}^{l-1} c_k \overline{d(k)} - NMw_1c_0^{\prime} ] \\
	&\Longrightarrow t \mid [xw_2 r^{\prime}\overline{d(l+1)} - xw_2c_1^{\prime} - x w_2r^{\prime}\sum_{k=2}^{l-1} c_k \overline{d(k)} - w_1 c_0^{\prime} N^{\prime}   ] \\
	&\Longrightarrow r^{\prime} xw_2 \overline{d(l+1)} \equiv [c_0^{\prime}w_1 N^{\prime} + c_1^{\prime} x w_2 + r^{\prime}x w_2 \sum_{k=2}^{l-1} c_k \overline{d(k)}] (\text{mod }t) \\
	&\Longrightarrow  r^{\prime}u  \overline{d(l+1)} \equiv [ r^{\prime} c_0^{\prime} + c_1^{\prime} u   + r^{\prime} u  \sum_{k=2}^{l-1} c_k \overline{d(k)}] (\text{mod }t) .\\
	\end{align*}
	Now,
	\begin{align*}
	\overline{ m_l} \gamma_l &= c_0 + c_1 \gamma_1 + \sum_{k=2}^{l-1} c_k \gamma_k \\
	&= M c_0^{\prime} + \overline{N} c_1^{\prime} \frac{Mu}{\overline{N} r^{\prime}} + \sum_{k=2}^{l-1} c_k [Mu \overline{d(k)} + \frac{M \overline{N} t \lambda_k}{d(k+1)}] \text{ where } \lambda_k \in \mathbb{Z}, \text{ by induction hypothesis}\\
	&= M \mathlarger{[}  \frac{c_0^{\prime}r^{\prime} + c_1^{\prime}u + r^{\prime} u \sum_{k=2}^{l-1} c_k \overline{d(k)}}{r^{\prime}}  + \frac{ \overline{N} t \theta_l}{d(l)}   \mathlarger{]} \text{ for some } \theta_l \in \mathbb{Z}, \text{ as } i_1 \leq i_2 \Longrightarrow d(i_1) \mid d(i_2) \\ 
	&= M \mathlarger{[} \frac{r^{\prime} u \overline{d(l+1)} + \mu_l t}{r^{\prime}} + \frac{ \overline{N}t \theta_l}{d(l)} \mathlarger{]} \text{ for some } \mu_l \in \mathbb{Z}\\
	&= Mu \overline{d(l+1)} + \frac{M \overline{N}t \mu_l}{\overline{ m_1}} + \frac{M \overline{N}t \theta_l}{d(l)} = Mu \overline{d(l+1)} + \frac{M \overline{N}t \lambda_l}{ d(l)} \text{ for some } \lambda_l \in \mathbb{Z}  \\
	&\Longrightarrow \gamma_l = Mu \overline{ m_2} \cdots \overline{ m_{l-1}} + \frac{M\overline{N}t \lambda_l}{ \overline{ m_1} \cdots \overline{ m_l} }.\\
	\end{align*}
	By our induction hypothesis, $(t, \overline{ m_k}) = 1 \, \forall \, k = 2, \cdots , l-1$. So $(p, \overline{ m_k}) = 1 $ for any prime $ p$ dividing $t, \, \forall \, k = 2, \cdots , l-1$, hence, $(p, \overline{d(l)}) = 1$. Suppose if possible $\exists$ a prime $p \mid t$ such that $p \mid \overline{ m_l}$. Let $\overline{ m_l} = p M_l$. Now, $(r^{\prime},t) = 1 \Longrightarrow (r^{\prime},p) = 1$. So $(p, r^{\prime} \overline{d(l)}) = 1$. So $\exists \, r_l \in \mathbb{Z}$ such that $r_l r^{\prime} \overline{d(l)} \equiv 1 (\text{mod }p)$. Let $r_l r^{\prime} \overline{d(l)} = 1 + \mu_l p$ for some $\mu_l \in \ZZ$. Now, 
	\begin{align*}
	\gamma_l &= Mu \overline{ m_2} \cdots \overline{ m_{l-1}} + \frac{M\overline{N}t \lambda_l}{ \overline{ m_1} \cdots \overline{ m_l} } \\
	&\Longrightarrow p M_l \gamma_l =  Mu \overline{ m_2} \cdots \overline{ m_{l}} + \frac{Mt \lambda_l}{r^{\prime} \overline{d(l)}} \text{ as } \overline{ m_{l}} = p M_l, \,  \overline{ m_1} = \overline{N}r^{\prime} , \, \overline{d(l)} = \prod_{k=2}^{l-1} \overline{ m_k}  \\
	&\Longrightarrow r^{\prime} \overline{d(l)} M_l \gamma_l =  r^{\prime} \overline{d(l)}Mu  \overline{ m_2} \cdots \overline{ m_{l-1}} M_l + M \lambda_l \frac{t}{p} \text{ as } \overline{ m_l} = pM_l\\
	&\Longrightarrow r_l r^{\prime} \overline{d(l)} M_l \gamma_l = r_l r^{\prime} \overline{d(l)}Mu\overline{ m_2} \cdots \overline{ m_{l-1}}M_l + r_l M \lambda_l \frac{t}{p} \in \ZZ\\
	&\Longrightarrow (1 + \mu_l p) M_l \gamma_l \in \ZZ  \Longrightarrow M_l \gamma_l + \mu_l \overline{ m_l} \gamma_l \in \mathbb{Z} \subset G(\gamma_0 , \cdots , \gamma_{l-1}) \Longrightarrow M_l \gamma_l \in G(\gamma_0 , \cdots , \gamma_{l-1}).
	\end{align*}
	But this contradicts the minimality of $\overline{ m_l}$. So for any prime $p$ dividing $t$, we have $p \nmid \overline{ m_l}$. Thus $(t, \overline{ m_l}) = 1$. We now have the induction step for $k = l$.
	\par  $(t, r^{\prime}) = 1 \Longrightarrow \overline{N}t \nmid \overline{N} r^{\prime} \Longrightarrow Nt \nmid \overline{N}r^{\prime}  \Longrightarrow \frac{n}{j} \nmid \overline{ m_1} \Longrightarrow \frac{n}{j} \nmid d(2)$. From the induction we have $(t, \overline{ m_k}) = 1 \, \forall \, k \geq 2$. Thus $ (t, \prod_{k=2}^{l-1} \overline{ m_k}) = 1 \Longrightarrow  (t, \overline{d(l)}) = 1 \, \forall \, l \geq 3 \Longrightarrow (t, r^{\prime} \overline{d(l)}) = 1 \, \forall \, l \geq 3$. $t \nmid r^{\prime} \overline{d(l)} \, \forall \, l \geq 3 \Longrightarrow \overline{N}t \nmid \overline{N} r^{\prime} \overline{d(l)} \, \forall \, l \geq 3 \Longrightarrow Nt \nmid \overline{ m_1} \overline{d(l)} \, \forall \, l \geq 3   \Longrightarrow \frac{n}{j} \nmid d(l) \, \forall \, l \geq 3$. So together we have, $\frac{n}{j} \nmid d(l) \, \forall \, l \geq 2$. Thus by Lemma \ref{not f.g condition}, we have $S^{R_{\mathfrak{m}}} (\nu)$ is not finitely generated over $S^{(A_{i,j,t,x})_{\mathfrak{n}}} (\nu)$.

\end{proof}

\par We are now ready to prove Theorem \ref{Main Theorem}.

\begin{proof}
	Let $i,j,t,x$ be positive integers satisfying the conditions of Theorem \ref{i,j,t,x theorem} and suppose that $\nu$ is a rational rank $1$ non discrete valuation dominating $R_{\mathfrak{m}}$ with a generating sequence (\ref{Gen Seq}) of eigenfunctions for $H_{i,j,t,x}$. By Proposition \ref{gcd > t}, we have $t \geq (\frac{m}{i}, \frac{n}{j})$. Since $t \mid \frac{m}{i}$ and $t \mid \frac{n}{j}$, we have $(\frac{m}{i}, \frac{n}{j}) = t$. 
	\par Conversely, let $i,j,t,x$ be positive integers satisfying the conditions of Theorem \ref{i,j,t,x theorem} and suppose that $(\frac{m}{i}, \frac{n}{j}) = t$. We will show that $\exists$ a rational rank $1$ non discrete valuation dominating $R_{\mathfrak{m}}$ with a generating sequence (\ref{Gen Seq}) of eigenfunctions for $H_{i,j,t,x}$. We consider the cases $t = 1$ and $t > 1$ separately. 
	\par Suppose that $(\frac{m}{i}, \frac{n}{j}) = t = 1$. We will construct a rational rank $1$ non discrete valuation $\nu$ dominating $R_{\mathfrak{m}}$, with a generating sequence (\ref{Gen Seq}) of eigenfunctions for $H_{i,j,t,x}$ such that $S^{R_{\mathfrak{m}}} (\nu)$ is finitely generated over $S^{(A_{i,j,t,x})_{\mathfrak{n}}} (\nu)$. Let $\{ q_l \}_{l \geq 2}$ be an infinite family of distinct prime numbers, such that $(q_l, \frac{m}{i}) = 1$, $(q_l, \frac{n}{j}) = 1$ for all $l \geq 2$. Let $q_1 = \frac{n}{j}$. Let $\{ c_l \}_{l \geq 1} \in \mathbb{Z}_{> 0}$ be positive integers such that 
	\begin{align*}
	c_1 &= \frac{m}{i} ,\, c_l \equiv 0 (\text{mod } \frac{m}{i}) \, \, \forall \, l \geq 1\\
	c_{l+1} &> q_{l+1} c_l \,\, \forall \, l  \geq 1 , \, (c_l, q_l) = 1  \, \, \forall \, l \geq 1 .\\
	\end{align*}
	We define a sequence of positive rational numbers $\{\gamma_l \}_{l \geq 0}$ as $\gamma_0 = 1$, $\gamma_l = \frac{c_l}{q_l} \, \, \forall \, l \geq 1$. We will show $\overline{m_l} = q_l \, \, \forall \, l \geq 1$, where $\overline{m_l} = $ min $\{q \in \mathbb{Z}_{> 0} \, | \, q \gamma_l \in G(\gamma_0 , \cdots , \gamma_{l-1}) \}$. Now, $\gamma_1 = \frac{c_1}{q_1}  = \frac{(\frac{m}{i})}{(\frac{n}{j})}$. Since $(\frac{m}{i}, \frac{n}{j}) = 1$, we have $\overline{m_1} = \frac{n}{j} = q_1$. For $l \geq 2$, $q_l \gamma_l = c_l \in \mathbb{Z} \Longrightarrow 1 \leq \overline{m_l} \leq q_l$. Suppose $q \in \mathbb{Z}_{> 0}$ such that $q \gamma_l  = q \frac{c_l}{q_l} = \sum_{k = 0}^{l-1} a_k \gamma_k = \sum_{k = 0}^{l-1} a_k \frac{c_k}{q_k}$. Then $q_l \mid q c_l \prod_{k =1}^{l-1} q_k$, that is, $q_l \mid q c_l \frac{n}{j} \prod_{k =2}^{l-1} q_k$. Now, $(q_l, c_l) = 1$ and $(q_l, \frac{n}{j}) = 1$. Again, $(q_l, q_k) = 1 \, \, \forall \, k \neq l$, as they are distinct primes. So, $q_l \, | \, q$. Thus we have $\overline{m_l} = q_l \, \, \forall \, l \geq 1$. And, $\overline{m_l} \gamma_l = q_l \gamma_l = c_l < \frac{c_{l+1}}{q_{l+1}} = \gamma_{l+1}$. Thus we have a sequence of positive rational numbers $\{ \gamma_l \}_{l \geq 0}$, such that $\gamma_{l+1} > \overline{m_l} \gamma_l  \,  \forall \, l \geq 1$. By Theorem $1.2$ of [\ref{CV1}], since $R_\mathfrak{m}$ is a regular local ring of dimension $2$, there is a valuation $\nu$ dominating $R_{\mathfrak{m}}$, such that $S^{R_{\mathfrak{m}}} (\nu) = S(\gamma_0, \gamma_1, \cdots)$. $\nu$ is a rational rank $1$ non discrete valuation by the construction. By Theorem $4.2$ of [\ref{CV1}], $\exists$ a generating sequence (\ref{Gen Seq}) $\{ Q_l \}_{l \geq 0}, Q_0 = X, Q_1 = Y, \cdots $ such that $\nu (Q_l) = \gamma_l \, \, \forall \, l \geq 0$. 
	\newline From the recursive construction of the $\{\gamma_l\}_{l \geq 0}$, we have the generating sequence as $Q_0 = X, \, Q_1 = Y , \, Q_2 = Y^{\frac{n}{j}} - \lambda_1 X^{\frac{m}{i}}$, where $\lambda_1 \in K \setminus \{ 0 \}$. For all $l \geq 2$, $Q_{l+1} = Q_{l}^{q_l} - \lambda_l X^{f_0} Y^{f_1} \cdots Q_{l-1}^{f_{l-1}}$, where $q_l \gamma_l = c_l = f_0 + \sum_{k = 1}^{l-1} f_k \gamma_k, \, 0 \leq f_k < \overline{ m_{k}} \, \forall \, k \geq 1$. Now, $(c_k, q_k) = 1  \, \forall \, k \geq 1$, and $(q_k, q_h) = 1 \, \forall \, k \neq h$. So, $c_l = f_0 + \sum_{k = 1}^{l - 1} \frac{f_k c_k}{q_k} \Longrightarrow c_l \prod_{k = 1}^{l-1} q_k = f_0 \prod_{k = 1}^{l-1} q_k + \frac{f_1 c_1 \prod_{k = 1}^{l-1} q_k}{q_1} + \cdots + \frac{f_{l-1} c_{l-1} \prod_{k = 1}^{l-1} q_k}{q_{l-1}}$, which implies $q_k \mid f_k \, \forall \, k \geq 1$. Since $0 \leq f_k < \overline{ m_{k}} = q_k$, this implies $f_k = 0 \, \forall \, k \geq 1$. So we have the generating sequence as,	
	\begin{align*}
	Q_0 = X, \, Q_1 = Y , \, Q_2 &= Y^{\frac{n}{j}} - \lambda_1 X^{\frac{m}{i}} , \, Q_{l+1} = Q_{l}^{q_l} - \lambda_{l} X^{c_l} \, \, \forall \, l \geq 2 \\
	&\text{where } \lambda_l \in K \setminus \{ 0 \} \, \forall \, l \geq 1.
	\end{align*}
	We now show that each $Q_l$ is an eigenfunction for $H_{i,j,1,1}$. $H_{i,j,1,1} = \{ (\alpha^{ai}, \beta^{bj}) \mid a,b \in \mathbb{Z} \}$. For all $l \geq 2$, $ d(l) = \prod_{k = 1}^{l-1} \overline{m_k} =   q_1 \cdots q_{l-1}  = \frac{n}{j} q_2 \cdots q_{l-1}$. We have, $(\alpha^{ai}, \beta^{bj}) \cdot Q_2 = \beta^{bj \frac{n}{j}}  Y^{\frac{n}{j}} -  \lambda_1 \alpha^{ai\frac{m}{i}} X^{\frac{m}{i}} = Q_2$. So, $Q_2$ is an eigenfunction. Suppose $Q_3, \cdots , Q_l$ are eigenfunctions for $H_{i,j,1,1}$. We check for $Q_{l+1}$.
	From (\ref{Q_m as eigenfunction}), $(\alpha^{ai}, \beta^{bj}) \cdot Q_k = \beta^{bj d(k)} Q_k \, \, \forall \, 2 \leq k \leq l$. Since $\frac{m}{i} \mid c_l$ and $\frac{n}{j} \mid d(l)$, we have $(\alpha^{ai}, \beta^{bj}) \cdot Q_{l+1} = \beta^{bjq_l d(l)} Q_l^{q_l} - \lambda_l \alpha^{ai c_l} X^{c_l} =Q_{l+1}$. Thus $Q_{l+1}$ is an eigenfunction. Thus by induction, $\{Q_l\}_{l \geq 0}$ is a generating sequence of eigenfunctions for $H_{i,j,1,1}$.

	\vspace{2 mm}
	\par Now we consider the case $(\frac{m}{i}, \frac{n}{j}) = t > 1$. We will construct a rational rank $1$ non discrete valuation $\nu$ dominating $R_{\mathfrak{m}}$, with a generating sequence (\ref{Gen Seq}) of eigenfunctions for $H_{i,j,t,x}$. 
	\par Since $(t,x) = 1$, there are positive integers $r,s$ such that $rx - st = 1$. So $(r,t) = 1$. From Lemma $3$ in $\S 2$, Chapter $III$ of [\ref{Serre}], we have that if $r,t$ are positive integers such that $(r,t) = 1$, then there are infinitely many prime numbers of the form $r + \theta t$, where $\theta \in \NN$. Define the family $ \mathfrak{R} =  \{ r^{(k)} \}_{k \geq 0}$ as $r^{(0)} = r, \, r^{(k)}$ = $k$-th prime in the above prime series. Any two elements in the family $\mathfrak{R}$ are coprime by construction. Also, $r^{(k)} = r + \theta_k t \Longrightarrow r^{(k)} \equiv r (\text{mod }t) \, \, \forall \, k$. Since $\mathfrak{R}$ is an infinite family such that any two elements in $\mathfrak{R}$ are mutually prime, it follows that there is an infinite ordered family of distinct prime numbers  $ \mathfrak{F} =  \{r_l \}_{l \geq 1}$ such that, $r_l \equiv r (\text{mod }t)$, $(r_l,{\frac{(\frac{m}{i})}{t}}) = 1$, $(r_l, \frac{(\frac{n}{j})}{t}) = 1$, $(r_l, w_1) = 1$, $(r_l, w_2) = 1 \, \forall \, l \geq 1$, where $w_1$ and $w_2$ are as in Remark \ref{relation between alpha and beta}. Let $d = (w_1, w_2)$. Thus $(\frac{w_1}{d}, \frac{w_2}{d}) = 1$. Define two sequences $(a_l)_{l \geq 1}$ and $(b_l)_{l \geq 1}$ of non negative integers as follows,
	\begin{align*}
	b_1 &= 0 , \, r_l \mid b_l  \, \, \forall \, l \geq 2  , \, t \mid b_l  \, \, \forall \, l \geq 2 \\
	b_{l+1}  &> r_{l+1} [r^{l-1} + b_l] - r^l  \, \, \forall \, l \geq 1\\
	a_l &= \frac{(\frac{m}{i})}{t} [r^{l-1} + b_l]\frac{w_2}{d}  \, \, \forall \, l \geq 1 .\\
	\end{align*}
	Here $r_l \in \mathfrak{F} \, \forall \, l \geq 1$. Define a sequence of positive rational numbers $\{\gamma_l \}_{l \geq 0}$ as follows 
	\begin{align*}
	\gamma_0 = 1, \, \gamma_1 &= \frac{ \frac{(\frac{m}{i})}{t} \frac{w_2}{d} }{r_1 \frac{(\frac{n}{j})}{t} \frac{w_1}{d} }, \\
	\gamma_l &= \frac{a_l}{r_l} = \frac{(\frac{m}{i})}{t} [\frac{r^{l-1} + b_l}{r_l}]\frac{w_2}{d} \,   \, \,\forall \, l \geq 2 .\\ 
	\end{align*}  
	We will show $\overline{m_1} = r_1  \frac{(\frac{n}{j})}{t} \frac{w_1}{d}$ and $\overline{m_l} = r_l \, \, \forall \, l \geq 2$, where $\overline{m_l}$ = min $\{q \in \mathbb{Z}_{> 0} \, | \, q \gamma_l \in G(\gamma_0 , \cdots , \gamma_{l-1}) \}$. $(\frac{w_1}{d}, \frac{w_2}{d}) = 1$, $(r_1, \frac{w_2}{d}) = 1$ and $(\frac{(\frac{n}{j})}{t}, \frac{w_2}{d}) = 1$ implies $(\frac{w_2}{d}, r_1  \frac{(\frac{n}{j})}{t} \frac{w_1}{d}) = 1$. Also, $(\frac{(\frac{m}{i})}{t}, \frac{(\frac{n}{j})}{t}) = 1$, $(\frac{(\frac{m}{i})}{t}, r_1) = 1$ and $(\frac{(\frac{m}{i})}{t}, \frac{w_1}{d}) = 1$ implies $(\frac{(\frac{m}{i})}{t}, r_1  \frac{(\frac{n}{j})}{t} \frac{w_1}{d}) = 1$. Thus, $(\frac{w_2}{d} \frac{(\frac{m}{i})}{t}, r_1  \frac{(\frac{n}{j})}{t} \frac{w_1}{d}) = 1$, hence $\overline{m_1} = r_1  \frac{(\frac{n}{j})}{t} \frac{w_1}{d}$.
	\newline Now $\forall \, l \geq 2$, $r_l \gamma_l = a_l \in \mathbb{Z} \Longrightarrow 1 \leq \overline{m_l} \leq r_l$. Suppose $ \exists$ a positive integer $q $ such that $q \gamma_l \in G(\gamma_0, \cdots, \gamma_{l-1})$. Then $q \gamma_l = q \frac{a_l}{r_l} = c_0 + c_1 \frac{a_1}{r_1 \frac{(\frac{n}{j})}{t} \frac{w_1}{d}} + \sum_{k = 2 }^{l-1} c_k \frac{a_k}{r_k}$, where $c_k \in \mathbb{Z} \, \forall \, k = 0, \cdots , l-1$. Thus $r_l \mid q a_l  \frac{(\frac{n}{j})}{t} \frac{w_1}{d} \prod_{k=1}^{l-1} r_k$. Now, $(r_l, \frac{(\frac{n}{j})}{t}) = 1$, and $(r_l, r_k) = 1 \, \forall \, k \neq l$, as they are distinct primes. Also, $(r_l, \frac{w_1}{d}) = 1$. So, $ r_l \mid q a_l$. And, $r_l > r \Longrightarrow r_l \nmid r \Longrightarrow r_l \nmid \frac{(\frac{m}{i})}{t} [r^{l-1} + b_l] \frac{w_2}{d} = a_l$ as $(r_l, \frac{w_2}{d}) = 1$, $(r_l,\frac{(\frac{m}{i})}{t} ) = 1$ and $r_l \mid b_l$. Thus, $r_l \mid q$. Hence we have $\overline{m_1} = r_1  \frac{(\frac{n}{j})}{t} \frac{w_1}{d}$ and $\overline{m_l} = r_l \, \, \forall \, l \geq 2$.
	\newline  Now, $b_{l + 1} > r_{l+1} [r^{l-1} + b_l] - r^l \, \forall \, l \geq 1$ and $b_1 = 0$ implies $b_2 > r_2 - r$. Thus, $a_2 = \frac{(\frac{m}{i})}{t} [r + b_2] \frac{w_2}{d} > r_2 \frac{(\frac{m}{i})}{t} \frac{w_2}{d} \Longrightarrow \gamma_2 = \frac{a_2}{r_2} > \frac{(\frac{m}{i})}{t} \frac{w_2}{d} = \overline{ m_1} \gamma_1$. For $l \geq 2$, we have $r^l + b_{l+1} > r_{l+1} [r^{l-1} + b_l] \Longrightarrow \frac{(\frac{m}{i})}{t} [r^l + b_{l+1}] \frac{w_2}{d} >  r_{l+1} \frac{(\frac{m}{i})}{t} [r^{l-1} + b_l] \frac{w_2}{d} \Longrightarrow \gamma_{l+1} = \frac{a_{l+1}}{r_{l+1}} > a_l = \overline{ m_l} \gamma_l$.
	\newline Thus we have a sequence of positive rational numbers $\{ \gamma_l \}_{l \geq 0}$ such that $\gamma_{l+1} > \overline{m_l} \gamma_l  \,  \forall \, l \geq 1$. By Theorem $1.2$ of [\ref{CV1}], since $R_\mathfrak{m}$ is a regular local ring of dimension $2$, there is a valuation $\nu$ dominating $R_{\mathfrak{m}}$, such that $S^{R_{\mathfrak{m}}} (\nu) = S(\gamma_0, \gamma_1, \cdots)$. $\nu$ is a rational rank $1$ non discrete valuation by the construction. By Theorem $4.2$ of [\ref{CV1}], $\exists$ a generating sequence (\ref{Gen Seq}) $\{ Q_l \}_{l \geq 0}, Q_0 = X, Q_1 = Y, \cdots $ such that $\nu (Q_l) = \gamma_l \, \, \forall \, l \geq 0$. 
	\newline From the recursive construction of the $\{ \gamma_l \}_{l \geq 0}$, we have the generating sequence as $Q_0 = X, \, Q_1 = Y, \, Q_2 = Y^{r_1 \frac{(\frac{n}{j})}{t} \frac{w_1}{d}} - \lambda_1 X^{\frac{(\frac{m}{i})}{t} \frac{w_2}{d}}$. For all $l \geq 2$, $Q_{l+1} = Q_l^{r_l} - \lambda_l X^{f_0} Y^{f_1} \cdots Q_{l-1}^{f_{l-1}}$, where $0 \leq f_k < \overline{ m_k} \, \forall \, k \geq 1$ and $r_l \gamma_l = a_l = f_0 + \sum_{k = 1}^{l-1} f_k \gamma_k$. So, $a_l = f_0 + \sum_{k = 1}^{l-1} \frac{f_k a_k}{\overline{ m_k}}$. We observe, from our construction, $(\overline{ m_k}, \overline{ m_h}) = 1 \, \forall \, k \neq h$. Also, $(\overline{ m_{k}}, a_k) = 1 \, \forall \, k \geq 1$. 
	\newline Thus, $a_l \prod_{ k = 1}^{l-1} \overline{ m_k} = f_0 \prod_{ k = 1}^{l-1} \overline{ m_k} + \frac{f_1 a_1 \prod_{ k = 1}^{l-1} \overline{ m_k}}{\overline{ m_1}} + \cdots + \frac{f_{l-1} a_{l-1} \prod_{ k = 1}^{l-1} \overline{ m_k}}{\overline{ m_{l-1}}} \Longrightarrow \overline{ m_k} \mid f_k \, \forall \, k \geq 1$. Since $0 \leq f_k < \overline{ m_k}$, we have $f_k = 0 \, \forall \, k \geq 1$. Thus the generating sequence is given as,
	\begin{align*}
	Q_0 = &X, \, Q_1 = Y, \, Q_2 = Y^{r_1 \frac{(\frac{n}{j})}{t} \frac{w_1}{d}} - \lambda_1 X^{\frac{(\frac{m}{i})}{t} \frac{w_2}{d}} \\
	&Q_{l+1} = Q_l^{r_l} - \lambda_l X^{a_l} \, \, \forall \, l \geq 2 \\
	&\text{where } \lambda_l \in K \setminus \{ 0 \} \, \forall \, l \geq 1. \\ 
	\end{align*}	
	This is a minimal generating sequence as $\overline{ m_l} > 1 \, \forall \, l \geq 1 $. We now show that each $Q_l$ is an eigenfunction for $H_{i,j,t,x}$. 	
	From (\ref{K-algebra action}), $(\alpha^{ai}, \beta^{bj}) \cdot Q_2 = \beta^{\frac{r_1 b n}{t} \frac{w_1}{d}} Y^{r_1 \frac{(\frac{n}{j})}{t} \frac{w_1}{d}} - \lambda_1 \alpha^{\frac{am}{t} \frac{w_2}{d}}  X^{\frac{(\frac{m}{i})}{t} \frac{w_2}{d}}$. Now, $\forall \, b \equiv ax (\text{mod }t)$, $r_1 b \equiv a (\text{mod }t)$, hence, $(\frac{r_1 b - a}{t}) (\frac{w_1 w_2}{d}) \in \mathbb{Z}$. Thus by Remark \ref{relation between alpha and beta}, $\beta^{\frac{r_1 bn}{t} \frac{w_1}{d}} = \alpha^{\frac{am}{t} \frac{w_2}{d}} \, \forall \, b \equiv ax (\text{mod } t)$, that is, $Q_2$ is an eigenfunction for $H_{i,j,t,x}$.
	\newline Suppose $Q_3, \cdots , Q_l$ are eigenfunctions for $H_{i,j,t,x}$. We check for $Q_{l+1}$. We note $d(k) = \overline{ m_1} \cdots \overline{ m_{k-1}} =  \frac{(\frac{n}{j})}{t} \frac{w_1}{d} r_1 r_2 \cdots r_{k-1}$. From (\ref{Q_m as eigenfunction}) we have, $(\alpha^{ai}, \beta^{bj}) \cdot Q_k = \beta^{bj d(k)} Q_k \, \, \forall \, 1 \leq k \leq l$. Now, $(\alpha^{ai}, \beta^{bj}) \cdot Q_{l+1} = \beta^{\frac{b n r_1 \cdots r_l}{t} \frac{w_1}{d}}Q_l^{r_l} - \lambda_l \alpha^{a i a_l} X^{a_l}$. Since $r_k \equiv r (\text{mod }t) \, \forall \, k \geq 1$, $rx \equiv 1 (\text{mod }t)$ and $t \mid b_l$, we have
	\begin{align*}
	&\frac{b r_1 \cdots r_l}{t} - \frac{a r^{l-1}}{t} \in \ZZ \, \forall \, b \equiv ax (\text{mod }t) \\
	\Longrightarrow  &\frac{b r_1 \cdots r_l}{t} - \frac{a [r^{l-1} + b_l]}{t} \in \ZZ \, \forall \, b \equiv ax (\text{mod }t) \\
	\Longrightarrow &\frac{b r_1 \cdots r_l}{t} (\frac{w_1 w_2}{d}) - \frac{a [r^{l-1} + b_l]}{t} (\frac{w_1 w_2}{d}) \in \ZZ \, \forall \, b \equiv ax (\text{mod }t) \\
	\Longrightarrow & \frac{bn r_1 \cdots r_l}{t} (\frac{w_1 w_2}{dn}) - \frac{ai (\frac{m}{i}) [r^{l-1} + b_l]}{t} (\frac{w_1 w_2}{dm}) \in \ZZ \, \forall \, b \equiv ax (\text{mod }t) \\
	\Longrightarrow & (\frac{b n r_1 \cdots r_l}{t} \frac{w_1}{d}) \frac{w_2}{n} - (a i a_l) \frac{w_1}{m} \in \ZZ \, \forall \, b \equiv ax (\text{mod }t) .\\
	\end{align*}
	Thus, by Remark \ref{relation between alpha and beta}, $\beta^{\frac{b n r_1 \cdots r_l}{t} \frac{w_1}{d}} = \alpha^{a i a_l}$ for all $b \equiv ax (\text{mod }t)$, and hence $Q_{l+1}$ is an eigenfunction for $H_{i,j,t,x}$. Thus by induction, $\{Q_l\}_{l \geq 0}$ is a minimal generating sequence of eigenfunctions for $H_{i,j,t,x}$. This completes the proof of part $1)$ of Theorem \ref{Main Theorem}. 
	
	\vspace{2 mm}
	\par Now we suppose $(\frac{m}{i}, \frac{n}{j}) = t = 1$ and $\nu$ is a rational rank $1$ non discrete valuation dominating $R_{\mathfrak{m}}$ with a generating sequence (\ref{Gen Seq}) of eigenfunctions for $H_{i,j,1,1}$. Let $\nu (Q_l) = \gamma_l \, \forall \, l \in \NN$. We have $Q_0 = X, Q_1 = Y$. By Equation ($8$) in [\ref{CV1}], $Q_2 = Y^s - \lambda X^r$ where $\lambda \in K \setminus \{ 0 \}$, $s \gamma_1 = r \gamma_0$. Since $(\frac{m}{i}, \frac{n}{j})= 1$, by Chinese Remainder Theorem (Theorem $2.1$, $\S 2$, [\ref{Lang}]) we have $H_{i,j,1,1}$ is a cyclic group, generated by $(\alpha^i, \beta^j)$. By (\ref{K-algebra action}) we have $(\alpha^i, \beta^j) \cdot Q_2 = \beta^{sj}Y^s - \lambda \alpha^{ir} X^r$. Since $Q_2$ is an eigenfunction, we have 
	\begin{align*}
	\beta^{sj} = \alpha^{ir} &\Longrightarrow \frac{sjw_2}{n} - \frac{irw_1}{m} \in \ZZ \text{ by Remark } \ref{relation between alpha and beta}\\
	&\Longrightarrow \frac{sw_2}{(\frac{n}{j})} - \frac{rw_1}{(\frac{m}{i})} \in \ZZ \\
	&\Longrightarrow \frac{m}{i} \mid r \text{ and } \frac{n}{j} \mid s \text{ as } (\frac{m}{i}, w_1) = 1, \, (\frac{n}{j}, w_2) = 1, \, (\frac{m}{i}, \frac{n}{j}) = 1.
	\end{align*}
	So, $Q_2 = Y^s - \lambda X^r \in K[X^{\frac{m}{i}}, Y^{\frac{n}{j}}] \subset A_{i,j,1,1}$. Thus by Proposition \ref{f.g iff all q_i after some i}, we have part $2)$ of Theorem \ref{Main Theorem}.
	
	\vspace{2 mm}
	\par We observe that the part $3)$ of Theorem \ref{Main Theorem} follows from Propositions \ref{p nmid N} and \ref{p divides N}. This completes the proof of Theorem \ref{Main Theorem}.

	\section{ Non-splitting} Suppose that a local domain $B$ dominates a local domain $A$. Let $L$ be the quotient field of $A$ and $M$ be the quotient field of $B$. Suppose $\omega$ is a valuation of $L$ which dominates $A$. We say that $\omega$ does not split in $B$ if there is a unique extension $\omega^*$ of $\omega$ to $M$ which dominates $B$. 
	\par  We use the same notation as in the previous sections.
	
	\begin{Theorem}\label{non splitting}
		Let $i,j,t,x$ be positive integers satisfying the conditions of Theorem \ref{i,j,t,x theorem} such that $(\frac{m}{i}, \frac{n}{j}) = t$. Suppose that $\nu$ is a rational rank $1$ non discrete valuation dominating $R_{\mathfrak{m}}$ with a generating sequence (\ref{Gen Seq}) of eigenfunctions for $H_{i,j,t,x}$. Let $\overline{ \nu} = \nu \mid_{Q (A_{i,j,t,x})}$ where $Q(A_{i,j,t,x})$ denotes the quotient field of $A_{i,j,t,x}$. Then $\overline{ \nu}$ does not split in $R_{\mathfrak{m}}$.
	\end{Theorem}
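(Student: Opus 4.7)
The strategy uses the Galois theory of valuations together with the canonical expansion from property $3)$ of Section \ref{GS}. Write $H := H_{i,j,t,x}$. The action of $H$ on $R = K[X,Y]$ extends to $K(X,Y)$, and a standard argument — multiplying numerator and denominator of any invariant element $f/g$ by $\prod_{h \ne e} h(g)$ — identifies the fixed field $K(X,Y)^H$ with $Q(A_{i,j,t,x})$. Consequently $K(X,Y)/Q(A_{i,j,t,x})$ is a finite Galois extension with group $H$, and the classical fact that the Galois group acts transitively on the set of extensions of a valuation forces every extension of $\overline{\nu}$ to $K(X,Y)$ to have the form $\nu \circ g$ for some $g \in H$. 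Each such $g$ preserves $R$ and the maximal ideal $\mathfrak{m} = (X,Y)$, so every conjugate $\nu \circ g$ also dominates $R_{\mathfrak{m}}$. Therefore non-splitting is equivalent to proving $\nu \circ g = \nu$ for every $g \in H$.

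To verify this it suffices to check $\nu(g(f)) = \nu(f)$ for every nonzero $f \in R$, since the equality then propagates to $R_{\mathfrak{m}}$ and to $K(X,Y)$ by multiplicativity. Fix $g = (\alpha^{ai}, \beta^{bj}) \in H$ and write $f$ in the canonical form of property $3)$:
\begin{equation*}
f = \sum_{m=0}^{d} f_m, \qquad f_m = \Big(\sum_l b_{l,m} X^l\Big) Q_1^{j_1(m)} \cdots Q_r^{j_r(m)},
\end{equation*}
with the $\nu(f_m)$ pairwise distinct, so that $\nu(f) = \min_m \nu(f_m)$. Applying $g$ termwise, using (\ref{K-algebra action}) on $X$ and the eigenfunction identity (\ref{Q_m as eigenfunction}) on each $Q_k$, I obtain
\begin{equation*}
g(f_m) = \beta^{bj\sum_k j_k(m)d(k)} \Big(\sum_l b_{l,m}\alpha^{ail}X^l\Big) Q_1^{j_1(m)} \cdots Q_r^{j_r(m)}.
\end{equation*}
The leading scalar is a unit of $K$, and rescaling each coefficient $b_{l,m}$ by the nonzero constant $\alpha^{ail}$ leaves unchanged the set of indices $l$ at which the coefficient is nonzero. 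Hence $\nu(g(f_m)) = \nu(f_m)$ for every $m$.

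Since $g(f) = \sum_m g(f_m)$ and the values $\nu(g(f_m))$ remain pairwise distinct, the non-archimedean inequality yields $\nu(g(f)) = \min_m \nu(g(f_m)) = \min_m \nu(f_m) = \nu(f)$, as required. The main point to guard against is that the minimum in the expansion of $g(f)$ might fail to be uniquely attained; this is automatic here because $g$ acts on each term as a nonzero scalar rescaling, preserving the $\nu$-value of every individual term, so distinctness is inherited directly from the original canonical expansion. The non-elementary ingredients — identification of the fixed field and transitivity of the Galois group on valuation extensions — are classical, and the rest of the argument is a computation made possible precisely by the eigenfunction hypothesis, which ensures the $g$-action is a termwise rescaling in the canonical expansion.
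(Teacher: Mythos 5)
Your proof is correct, and it takes a genuinely different route from the paper's.

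The paper proves non-splitting indirectly: using the formula (\ref{invariant subsemigroup formula}) for $S^{(A_{i,j,t,x})_{\mathfrak{n}}}(\nu)$ it analyzes the value group $\Gamma_{\overline{\nu}}$ generated by this semigroup, studies the orders of the classes $[\gamma_0]$ and $[\gamma_1]$ in $\Gamma_\nu / \Gamma_{\overline{\nu}}$ through a four-way case split, shows that $MNt$ divides $e = [\Gamma_\nu : \Gamma_{\overline{\nu}}]$, and then invokes the relation $efr = [K(X,Y):Q(A_{i,j,t,x})] = MNt$ to force $e = MNt$ and $r=1$. You instead prove $r = 1$ directly by showing the decomposition group of $\nu$ over $\overline{\nu}$ is all of $H_{i,j,t,x}$: since the extension is Galois with group $H_{i,j,t,x}$, the extensions of $\overline{\nu}$ are exactly the conjugates $\nu \circ g$, all of which dominate $R_{\mathfrak{m}}$, and the eigenfunction hypothesis makes each $g$ act as a termwise nonzero rescaling of the canonical expansion from property $3)$ of Section \ref{GS}, so that $\nu(g(f)) = \nu(f)$ for every $f \in R$. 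Your route is shorter, avoids the case analysis entirely, and makes transparent exactly where the eigenfunction hypothesis enters; the paper's computation yields the additional explicit information that the ramification index equals $MNt$ (equivalently that $f = 1$), which your argument gives only implicitly via the fundamental identity once $r=1$ is known. Both arguments rest on the fundamental identity for the Galois extension $K(X,Y)/Q(A_{i,j,t,x})$, but they use it in opposite directions: the paper bounds $e$ from below and deduces $r=1$; you establish $r=1$ and let the identity determine $e$ and $f$.

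One small expository remark: your reduction relies on the fact that every conjugate $\nu \circ g$ dominates $R_{\mathfrak{m}}$ because $g$ preserves both $R$ and $\mathfrak{m} = (X,Y)$. You state this, but it is worth emphasizing that this is precisely why the relevant extensions of $\overline{\nu}$ are exactly the $H_{i,j,t,x}$-conjugates of $\nu$ (a priori the definition of non-splitting concerns only those extensions dominating $R_{\mathfrak{m}}$). With that noted, the argument is complete.
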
 
	
	\begin{proof}
		Let $\{ Q_k \}_{k \geq 0}$, $\{ \gamma_k \}_{k \geq 0}$ and $\{ \overline{ m_{k}} \}_{k \geq 1}$ be as in Section \ref{GS}. Thus $Q_0 = X$ and $Q_1 = Y$. Without any loss of generality, we can assume $\gamma_0 = 1$. Set $\frac{m}{i} = Mt$ and $\frac{n}{j} = Nt$ where $M,N \in \mathbb{Z}_{> 0}$ and $(M,N) = 1$. From (\ref{invariant subsemigroup formula}) we have 
		\begin{equation*}
		S^{(A_{i,j,t,x})_{\mathfrak{n}}} (\nu)= \left\{ l \gamma_0 + j_1 \gamma_1 + \cdots + j_r \gamma_r  \, \left | \, \begin{array}{@{}l@{\thinspace}l} 
		l \in \mathbb{N}, \, r \in \NN,   \, 0 \leq j_k < \overline{m_k} \, \forall \, k = 1, \cdots , r\\
		\alpha^{lai}\beta^{bj\mathlarger{\sum_{k=1}^r [j_{k} d(k)] }} = 1\\
		\forall \, b \equiv ax (\text{mod }t)\\
		\end{array}
		\right.
		\right\}.
		\end{equation*}	
		Now, $\overline{ \nu} = \nu \mid_{Q(A_{i,j,t,x})}$. Thus $S^{(A_{i,j,t,x})_{\mathfrak{n}}} (\nu)= \{ \nu (f) \mid 0 \neq f \in (A_{i,j,t,x})_{\mathfrak{n}}  \} = S^{(A_{i,j,t,x})_{\mathfrak{n}}} (\overline{ \nu})$. The group generated by $S^{(A_{i,j,t,x})_{\mathfrak{n}}} (\overline{ \nu})$ is $\Gamma_{\overline{ \nu}}$, the value group of $\overline{ \nu}$ ($1.2$, [\ref{Cut_Ramification of Valuations}]). Thus $\Gamma_{\overline{ \nu}} = \{ s_1 - s_2  \mid s_1, s_2 \in S^{(A_{i,j,t,x})_{\mathfrak{n}}} (\nu) \}$. Suppose $\gamma_0 \in \Gamma_{\overline{ \nu}}$. Then we have a representation, 
		\begin{align*}
		\gamma_0 = (l_1 \gamma_0 + \sum_{k = 1}^{r} h_{1,k} \gamma_k) - (l_2 \gamma_0 + \sum_{k = 1}^{r} h_{2,k} \gamma_k) = (l_1 - l_2) \gamma_0 + \sum_{k = 1}^{r} (h_{1,k} - h_{2,k}) \gamma_k
		\end{align*}
		where $l_1 \gamma_0 + \sum_{k = 1}^{r} h_{1,k} \gamma_k \in S^{(A_{i,j,t,x})_{\mathfrak{n}}} (\nu)$, and $l_2 \gamma_0 + \sum_{k = 1}^{r} h_{2,k} \gamma_k \in S^{(A_{i,j,t,x})_{\mathfrak{n}}} (\nu)$. Thus $l_1, l_2 \in \NN$, $r \in \NN$ and $0 \leq h_{1,k}, h_{2,k} < \overline{m_k} \, \forall \, k = 1 , \cdots , r$. So, $|h_{1,k} - h_{2,k}| < \overline{m_k} \, \forall \, k = 1 , \cdots , r$. Now $(h_{1,r} - h_{2,r}) \gamma_r \in G(\gamma_0, \cdots , \gamma_{r-1})$ and $|h_{1,r} - h_{2,r}| < \overline{m_r} \Longrightarrow h_{1,r} = h_{2,r}$. With the same argument, we have $h_{1,k} = h_{2,k} \, \forall \, k = 1 , \cdots , r$. So in the representation of $\gamma_0$, we have $\gamma_0 = (l_1 - l_2) \gamma_0 \Longrightarrow l_1 - l_2 = 1$. Also, 
		\begin{align*}
		&\alpha^{l_1 ai}\beta^{bj\mathlarger{\sum_{k=1}^r [h_{1,k} d(k)] }} = 1 = \alpha^{l_2 ai}\beta^{bj\mathlarger{\sum_{k=1}^r [h_{2,k} d(k)] }} \\	
		&\Longrightarrow \alpha^{(l_1 - l_2) a i} \beta^{bj \sum_{k=1}^{r} [(h_{1,k} - h_{2,k}) d(k) ]} = 1 \, \forall \, b \equiv ax (\text{mod }t). \\
		\end{align*}
		Since $l_1 - l_2 = 1$ and $h_{1,k} = h_{2,k} \, \forall \, k = 1 , \cdots , r$, we have $\alpha^{ai} = 1 \, \forall \, b \equiv ax (\text{mod }t)$. Thus $\alpha^i = 1$, hence, $m \mid i$, that is, $m = i$. So we have obtained, 
		\begin{equation}\label{beta0 in value group}
		\gamma_0 \in \Gamma_{\overline{ \nu}} \Longrightarrow M = 1, \, t = 1.
		\end{equation}
		Suppose $\gamma_1 \in \Gamma_{\overline{ \nu}}$. Then we have a representation,
		\begin{align*}
		\gamma_1 = (l_1 \gamma_0 + \sum_{k = 1}^{r} j_{1,k} \gamma_k) - (l_2 \gamma_0 + \sum_{k = 1}^{r} j_{2,k} \gamma_k) = (l_1 - l_2) \gamma_0 + \sum_{k = 1}^{r} (j_{1,k} - j_{2,k}) \gamma_k
		\end{align*} 
		where $l_1 \gamma_0 + \sum_{k = 1}^{r} j_{1,k} \gamma_k \in S^{(A_{i,j,t,x})_{\mathfrak{n}}} (\nu)$, and $l_2 \gamma_0 + \sum_{k = 1}^{r} j_{2,k} \gamma_k \in S^{(A_{i,j,t,x})_{\mathfrak{n}}} (\nu)$. So, $l_1, l_2 \in \NN$, $r \in \NN$ and $0 \leq j_{1,k}, j_{2,k} < \overline{m_k} \, \forall \, k = 1, \cdots , r$. So, $|j_{1,k} - j_{2,k}| < \overline{m_k} \, \forall \, k = 1 , \cdots , r $. Now, $(j_{1,r} - j_{2,r}) \gamma_r \in G(\gamma_0, \cdots , \gamma_{r-1}) $ and $|j_{1,r} - j_{2,r}| < \overline{m_r} \Longrightarrow j_{1,r} = j_{2,r}$. With the same argument, we have $j_{1,k} = j_{2,k} \, \forall \, k = 2, \cdots r$. Thus we have, $\gamma_1 = (l_1 - l_2) \gamma_0 + (j_{1,1} - j_{2,1}) \gamma_1$ where $0 \leq |j_{1,1} - j_{2,1}| < \overline{m_1}$. Again, $\forall \, b \equiv ax (\text{mod }t)$ we have
		\begin{align*}
		\alpha^{l_1 ai}\beta^{bj\mathlarger{\sum_{k=1}^r [j_{1,k} d(k)] }} = 1 = \alpha^{l_2 ai}\beta^{bj\mathlarger{\sum_{k=1}^r [j_{2,k} d(k)] }}.
		\end{align*}
		Since $d(1) = $ deg$_Y (Y) = 1$ and $j_{1,k} = j_{2,k} \, \forall \, k = 2, \cdots , r$, we have $\alpha^{(l_1 - l_2) a i} \beta^{bj (j_{1,1} - j_{2,1})} = 1$ for all $b \equiv ax (\text{mod }t)$. So if $\gamma_1 \in \Gamma_{\overline{ \nu}}$, we have a representation 
		\begin{equation*}
		\begin{split}
		\gamma_1 = l \gamma_0 + j_1 \gamma_1 \text{ where } &l \in \ZZ, \, 0 \leq |j_1| < \overline{m_1}\\
		&\alpha^{lai} \beta^{bj j_1} = 1 \, \forall \, b \equiv ax (\text{mod }t). 
		\end{split}
		\end{equation*}
		In the above expression, $(1-j_1) \gamma_1 = l \gamma_0 \in \gamma_0 \ZZ \Longrightarrow \overline{m_1} \mid (1 - j_1)$. 
		\newline And $|1-j_1| \leq 1 + |j_1| \leq \overline{m_1} \Longrightarrow |1-j_1| = 0$ or $\overline{m_1}$. $1-j_1 = 0 \Longrightarrow l = 0, \, j_1 = 1$. From the above expression we then have, $\beta^{bj} = 1 \, \forall \, b \equiv ax (\text{mod }t) \Longrightarrow  n = j$. Now consider $|1 - j_1| = \overline{m_1}$. If $1 - j_1 = - \overline{m_1}$ then $j_1 = 1 + \overline{m_1}$ which contradicts $|j_1| < \overline{m_1}$. So $1 - j_1 = \overline{m_1}$, that is, $j_1 = 1 - \overline{m_1}$. And $(1 - j_1) \gamma_1 = \overline{m_1} \gamma_1 = l \gamma_0$. So $Q_2 = Q_1^{\overline{m_1}} - \lambda X^l$ where $\lambda \in K \backslash \{ 0\}$. $(\alpha^{ai}, \beta^{bj}) \cdot Q_2 = \beta^{bj \overline{m_1} } Q_1^{\overline{m_1}}  - \lambda \alpha^{ail}  X^l$. Since $Q_2$ is an eigenfunction, we have $\beta^{bj \overline{m_1}} = \alpha^{ail} \, \forall \, b \equiv ax (\text{mod }t)$.
		Again from the above expression we have, $\alpha^{ail} \beta^{bj} = \beta^{bj \overline{m_1}} \, \forall \, b \equiv ax (\text{mod }t)$, as $j_1 = 1 - \overline{m_1}$. Thus, $\beta^{bj} = 1 \, \forall \, b \equiv ax (\text{mod }t)$, and hence $j = n$. So we have obtained, 
		\begin{equation}\label{beta1 in value group}
		\gamma_1 \in \Gamma_{\overline{ \nu}} \Longrightarrow N = 1, \, t = 1.
		\end{equation} 
		For an element $g \in \Gamma_\nu$, let $[g]$ denote the class of $g$ in $\frac{\Gamma_\nu}{\Gamma_{\overline{ \nu}}}$. Since $\frac{\Gamma_\nu}{\Gamma_{\overline{ \nu}}}$ is a finite group, $[g]$ has finite order for each $g \in \Gamma_\nu$. Let $e = [\Gamma_\nu : \Gamma_{\overline{ \nu}}]$.
		\par  
		First we suppose $\gamma_0 \in \Gamma_{\overline{ \nu}}$ and $\gamma_1 \in \Gamma_{\overline{ \nu}}$. From (\ref{beta0 in value group}) and (\ref{beta1 in value group}) we have $M = N = t = 1$. From Proposition \ref{order of group} we have $|H_{i,j,t,x}| = MNt = 1$. Thus, $MNt \mid e$.
		\par Now we suppose $\gamma_0 \notin \Gamma_{\overline{ \nu}}$ and $\gamma_1 \in \Gamma_{\overline{ \nu}}$. From (\ref{beta1 in value group}) we have $N = t = 1$. From Proposition \ref{order of group} we have $|H_{i,j,t,x}| = MNt = M$. Let $f_0$ denote the order of $[\gamma_0]$. Thus $f_0 \gamma_0 \in \Gamma_{\overline{ \nu}}$. We thus have a representation 
		\begin{align*}
		f_0 \gamma_0 = (l_1 \gamma_0 + \sum_{k = 1}^{r} h_{1,k} \gamma_k) - (l_2 \gamma_0 + \sum_{k = 1}^{r} h_{2,k} \gamma_k) = (l_1 - l_2) \gamma_0 + \sum_{k = 1}^{r} (h_{1,k} - h_{2,k}) \gamma_k
		\end{align*} 
		where $l_1 \gamma_0 + \sum_{k = 1}^{r} h_{1,k} \gamma_k \in S^{(A_{i,j,t,x})_{\mathfrak{n}}} (\nu)$, and $l_2 \gamma_0 + \sum_{k = 1}^{r} h_{2,k} \gamma_k \in S^{(A_{i,j,t,x})_{\mathfrak{n}}} (\nu)$. Thus $l_1, l_2 \in \NN$, $r \in \NN$ and $0 \leq h_{1,k}, h_{2,k} < \overline{m_k} \, \forall \, k = 1 , \cdots , r$. So, $|h_{1,k} - h_{2,k}| < \overline{m_k} \, \forall \, k = 1 , \cdots , r$.
		With the same arguments as above, we have $h_{1,k} = h_{2,k} \, \forall \, k = 1 , \cdots , r$. Thus $f_0 \gamma_0 = (l_1 - l_2) \gamma_0 \Longrightarrow f_0 = l_1 - l_2$. And, for all $ b \equiv ax (\text{mod }t)$, 
		\begin{equation*}
		\alpha^{l_1 ai}\beta^{bj\mathlarger{\sum_{k=1}^r [h_{1,k} d(k)] }} = 1 = \alpha^{l_2 ai}\beta^{bj\mathlarger{\sum_{k=1}^r [h_{2,k} d(k)] }}.
		\end{equation*}
		So, $\alpha^{(l_1 - l_2)i} = \alpha^{f_0 i} = 1$, hence $Mt \mid f_0 \Longrightarrow Mt \mid e$. Thus $MNt \mid e \text{ as } MNt = M$.	
		\par Now we suppose $\gamma_0 \in \Gamma_{\overline{ \nu}}$ and $\gamma_1 \notin \Gamma_{\overline{ \nu}}$. From (\ref{beta0 in value group}) we have $M = t = 1$. $|H_{i,j,t,x}| = MNt = N$. Let $f_1$ denote the order of $[\gamma_1]$, that is $f_1 \gamma_1 \in \Gamma_{\overline{ \nu}}$. We have a representation,
		\begin{align*}
		f_1 \gamma_1 = (l_1 \gamma_0 + \sum_{k = 1}^{r} j_{1,k} \gamma_k) - (l_2 \gamma_0 + \sum_{k = 1}^{r} j_{2,k} \gamma_k) = (l_1 - l_2) \gamma_0 + \sum_{k = 1}^{r} (j_{1,k} - j_{2,k}) \gamma_k
		\end{align*} 
		where $l_1 \gamma_0 + \sum_{k = 1}^{r} j_{1,k} \gamma_k \in S^{(A_{i,j,t,x})_{\mathfrak{n}}} (\nu)$, and $l_2 \gamma_0 + \sum_{k = 1}^{r} j_{2,k} \gamma_k \in S^{(A_{i,j,t,x})_{\mathfrak{n}}} (\nu)$. So, $l_1, l_2 \in \NN$, $r \in \NN$ and $0 \leq j_{1,k}, j_{2,k} < \overline{m_k} \, \forall \, k = 1, \cdots , r$. So, $|j_{1,k} - j_{2,k}| < \overline{m_k} \, \forall \, k = 1 , \cdots , r. $ With the same arguments as above, we have $j_{1,k} = j_{2,k} \, \forall \, k = 2, \cdots , r$. So in the above representation, we have $f_1 \gamma_1 = (l_1 - l_2) \gamma_0 + (j_{1,1} - j_{2,1}) \gamma_1$ where $0 \leq |j_{1,1} - j_{2,1}| < \overline{m_1}$. Again, $\forall \, b \equiv ax (\text{mod }t)$ we have
		\begin{equation*}
		\alpha^{l_1 ai}\beta^{bj\mathlarger{\sum_{k=1}^r [j_{1,k} d(k)] }} = 1 = \alpha^{l_2 ai}\beta^{bj\mathlarger{\sum_{k=1}^r [j_{2,k} d(k)] }}.
		\end{equation*}
		Since $d(1) = 1$ and $j_{1,k} = j_{2,k} \, \forall \, k = 2, \cdots , r$, we have $\alpha^{(l_1 - l_2) a i} \beta^{bj (j_{1,1} - j_{2,1})} = 1$ for all $b \equiv ax (\text{mod }t)$. So we have a representation, 
		\begin{equation*}
		\begin{split}
		f_1 \gamma_1 = l \gamma_0 + j_1 \gamma_1 \text{ where } &l \in \ZZ, \, 0 \leq |j_1| < \overline{m_1}\\
		&\alpha^{lai} \beta^{bj j_1} = 1 \, \forall \, b \equiv ax (\text{mod }t).
		\end{split}
		\end{equation*}
		$(f_1 - j_1) \gamma_1 = l \gamma_0 \Longrightarrow \overline{m_1} \mid (f_1 - j_1)$. Let $f_1 - j_1 = c \overline{m_1}$ where $c \in \ZZ$. Let $\overline{m_1} \gamma_1 = s \gamma_0$ where $s \in \ZZ_{> 0}$. Thus $f_1 \gamma_1 = cs \gamma_0 + j_1 \gamma_1 \Longrightarrow l \gamma_0 = cs \gamma_0 $. Thus $l = cs$. Since $\overline{m_1} \gamma_1 = s \gamma_0$, we have $Q_2 = Q_1^{\overline{m_1}} - \lambda X^s$ where $\lambda \in K \backslash \{ 0 \}$. $(\alpha^{ai}, \beta^{bj}) \cdot Q_2 = \beta^{bj \overline{m_1}} Q_1^{\overline{m_1}} - \lambda \alpha^{ais} X^s$. Since $Q_2$ is an eigenfunction we have, $\beta^{bj \overline{m_1}} = \alpha^{ais} \, \forall \, b \equiv ax (\text{mod }t)$. Again, from the above expression of $f_1 \gamma_1$, we have
		\begin{align*}
		& \alpha^{lai} \beta^{bj (f_1 - c \overline{m_1})} = 1 \, \forall \, b \equiv ax (\text{mod }t) \\
		&\Longrightarrow \alpha^{cs ai} \beta^{bj f_1} = \beta^{bjc \overline{m_1}} \, \forall \, b \equiv ax (\text{mod }t) \text{ as } l = cs\\
		&\Longrightarrow \beta^{bj f_1} = 1 \, \forall \, b \equiv ax (\text{mod }t) \Longrightarrow Nt \mid f_1 \Longrightarrow Nt \mid e.
		\end{align*}
		Thus we have obtained, $MNt \mid e \text{ as } MNt = N$.
		\par Now we consider the final case, $\gamma_0 \notin \Gamma_{\overline{ \nu}}$ and $\gamma_1 \notin \Gamma_{\overline{ \nu}}$. Let $f_0$ denote the order of $[\gamma_0]$ and $f_1$ denote the order of $[\gamma_1]$ in $\frac{\Gamma_\nu}{\Gamma_{\overline{ \nu}}}$. With the same arguments as before, we obtain $Mt \mid f_0 \text{ and } Nt \mid f_1$. Thus we have $Mt \mid e$ and $Nt \mid e$. Now $(Mt, Nt) = t$. So the lowest common multiple of $Mt$ and $Nt$ is $\frac{Mt Nt}{t} = MNt$. Thus, $MNt \mid e$.	
		\par Now, $K(X,Y)$ is a Galois extension of $Q(A_{i,j,t,x})$ with Galois group $H_{i,j,t,x}$ (Proposition $1.1.1$, [\ref{Benson}]). Thus $[K(X,Y) : Q(A_{i,j,t,x})] = |H_{i,j,t,x}| = MNt$ from Proposition \ref{order of group}. Let $\nu = \nu_1, \nu_2 , \cdots , \nu_r$ be all the distinct extensions of $\overline{ \nu} \text{ to } K(X,Y)$. Then (\S $12$, Theorem $24$, Corollary, [\ref{ZS}]), 
		\begin{equation*}
		efr = [K(X,Y) : Q(A_{i,j,t,x})] = MNt.
		\end{equation*}
		Since $MNt \mid e$, we have $e = MNt, \, r = 1$. So $\nu$ is the unique extension of $\overline{ \nu}$ to $K(X,Y)$. Thus $\overline{ \nu}$ does not split in $R_{\mathfrak{m}}$.	
		
	\end{proof}

\end{proof}

\end{document}